\newtheorem{lemma}{Lemma}
\newtheorem{proposition}{Proposition}
\newtheorem{corollary}{Corollary}
\theoremstyle{definition}
\newtheorem{remark}{Remark}
\newtheorem{conjecture}{Conjecture}
\newtheorem{example}{Example}
\newtheorem{definition}{Definition}
\newcommand{\myK}{\mathbf{\Delta Gpd}}
\newcommand{\myR}{\mathbf{Ring}}
\newcommand{\myGset}[1]{\mathbb{S}_{#1}\mathbf{Set}}
\title{$\Delta$-groupoids in knot theory}
\author{R. M.  Kashaev}
\thanks{The work is supported in part by the Swiss National Science
  Foundation.}
\address{Universit\'e de Gen\`eve,
Section de math\'ematiques,
2-4, rue du Li\`evre,
CP 64,
1211 Gen\`eve 4, Suisse
}
\date{August 9, 2009}
\email{Rinat.Kashaev@unige.ch}
\begin{document}

\begin{abstract}
A $\Delta$-groupoid is an algebraic structure which axiomitizes the combinatorics of a truncated tetrahedron. It is shown that there are relations of $\Delta$-groupoids to rings, group pairs, and (ideal) triangulations of three-manifolds. In particular, one can associate a $\Delta$-groupoid to ideal triangulations of knot complements. It is also possible to define a homology theory of $\Delta$-groupoids. The constructions are illustrated by examples coming from knot theory.
\end{abstract}
\maketitle
\section{Introduction}\label{mysec:1}
Let $G$ be a groupoid and $H$ its subset. We say that a pair of elements $(x,y)\in H\times H$ is $H$-\emph{composable} if it is composable in $G$ and $xy\in H$.
\begin{definition}\label{def:delta}
A \emph{  $\Delta$-groupoid} is a groupoid $G$, a generating subset $H\subset G$, and an involution $j\colon H\to H$, such that
\begin{itemize}
\item[(i)] $i(H)=H$, where $i(x)=x^{-1}$;
\item[(ii)] the involutions $i$ and $j$ generate an action of the symmetric group $\mathbb{S}_3$ on the set $H$, i.e. the following equation is satisfied: $iji=jij$;
\item[(iii)] if $(x,y)\in H^2$ is a composable pair then $(k(x),j(y))$ is also a composable pair, where $k=iji$;
\item[(iv)] if $(x,y)\in H^2$ is $H$-composable then $(k(x),j(y))$ is also $H$-composable, and
the following identity is satisfied:
\begin{equation}\label{myeq:1}
k(xy)ik(y)=k(k(x)j(y)).
\end{equation}
\end{itemize}
A \emph{$\Delta$-group} is a $\Delta$-groupoid  with one object (identity element).
\end{definition}
A morphism  between two $\Delta$-groupoids is a groupoid morphism $f\colon G\to G'$ such that $f(H)\subset H'$ and $j'f=fj$. In this way, one comes to the category $\myK$ of   $\Delta$-groupoids.
\begin{remark}
Equation~\eqref{myeq:1} can be replaced by an equivalent equation of the form
\[
ij(x)j(xy)=j(k(x)j(y)).
\]
\end{remark}
\begin{remark}\label{rm:inv}
In any $\Delta$-groupoid $G$ there is a canonical involution $A\mapsto A^*$ acting on the set of objects (or the identities) of $G$. It can be defined as follows. Let $\mathrm{dom},\mathrm{cod}\colon G\to \mathrm{Ob}G$ be the domain (source) and the codomain (target) maps, respectively. As $H$ is a generating set for $G$, for any $A\in\mathrm{Ob}G$ there exists $x\in H$ such that $A=\mathrm{dom}(x)$. We define $A^*=\mathrm{dom}(j(x))$. This definition is independent of the choice of $x$. Indeed, let $y\in H$ be any other element satisfying the same condition. Then, the pair $(i(y),x)$  is composable, and, therefore, so is $(ki(y),j(x))$. Thus\footnote{For compositions in groupoids we use the convention adopted for fundamental groupoids of topological spaces, i.e. $(x,y)\in G^2$ is composable iff $\mathrm{cod}(x)=\mathrm{dom}(y)$, and the product is written $xy$ rather than $y\circ x$.},
\[
\mathrm{dom}(j(y))=\mathrm{cod}(ij(y))=\mathrm{cod}(ki(y))=\mathrm{dom}(j(x)).
\]
\end{remark}
\begin{remark}
Definition~\ref{def:delta} differs from the one given in the preprint~\cite{K} in the following aspects:
\begin{enumerate}
\item the subset $H$ was not demanded to be a generating set for $G$ so that it was possible to have empty $H$ with non-empty $G$;
\item the condition (iii), which is essential in Remark~\ref{rm:inv}, was not imposed;
\item it was implicitly assumed that any $x\in H$ enters an $H$-composable pair, and under that assumption the condition (ii) was superfluous\footnote{I am grateful to D.~Bar-Natan for pointing out to this assumption during my talk at the workshop "Geometry and TQFT", Aarhus, 2007.}.
    \end{enumerate}
\end{remark}

 \begin{example}\label{myex:11}
 Let $G$ be a group. The tree groupoid $G^{2}$ is a $\Delta$-groupoid with $H=G^2$, $j(f,g)=(f^{-1},f^{-1}g)$.
  \end{example}
 \begin{example}\label{myex:0}
 Let $X$ be a set. The set
 \(
 X^{3 }
 \)
can be thought of as a disjoint sum of tree groupoids $X^2$ indexed by $X$: $X^3\simeq\sqcup_{x\in X}\{x\}\times X^2$. In particular, $\mathrm{Ob}(X^3)=X^2$ with $\mathrm{dom}(a,b,c)=(a,b)$ and $\mathrm{cod}(a,b,c)=(a,c)$. This is a $\Delta$-groupoid with $H=X^3$ and $j(a,b,c)=(b,a,c)$.
 \end{example}
\begin{example}\label{myex:01}
 We define an involution $\mathbb{Q}\cap [0,1[\ni t\mapsto t^*\in\mathbb{Q}\cap [0,1[$ by the following conditions: $0^*=0$ and if $t=p/q$ with positive mutually prime integers $p,q$, then
 $t^*=\bar p/q$, where $\bar p$ is uniquely defined by the equation $p\bar p=-1\pmod q$. We also define
 a map $\mathbb{Q}\cap [0,1[\ni t\mapsto \hat t\in\mathbb{Q}\cap [0,1]$ by the formulae $\hat 0=1$ and
 $\hat t =(p\bar p+1)/q^2$ if $t=p/q$ with positive, mutually prime integers $p,q$. Notice that in the latter case
 $\hat t=\tilde p/q$ with $\mathbb{Z}\ni\tilde p=(p\bar p+1)/q$, and $1\le\tilde p\le \min(p,\bar p)$. We also remark that $\widehat{t^*}=\hat t$.

The rational strip $X=\mathbb{Q}\times (\mathbb{Q}\cap [0,1[)$ can be given a groupoid structure as follows. Elements $(x,s)$ and $(y,t)$ are composable iff  $y\in s+\mathbb{Z}$, i.e. the fractional part of $y$ is $s$, with the product
$(x,s)(s+m,t)=(x+m,t)$, the inverse $(s+k,t)^{-1}=(t-k,s)$, and the set of units $\mathrm{Ob}X=\{(t,t)\vert\ t\in \mathbb{Q}\cap [0,1[\}$. Denote $\Gamma_X$ the underlying graph of $X$, i.e. the subset of non-identity morphisms. One can show that $X$ is a $\Delta$-groupoid with $H=\Gamma_X$ and
\[
k(x,t)=\left(\frac{t^* x-\hat t}{x-t},t^*\right).
\]
\end{example}
\begin{example}\label{myex:1}
Let $R$ be a ring. We define a $\Delta$-group $AR$ as the subgroup of the group $R^* $ of invertible elements of $R$ generated by the subset $H=(1-R^* )\cap R^*$ with $k(x)=1-x$.
\end{example}
\begin{example}\label{myex:2}
For a ring $R$, let $R\rtimes R^* $ be the semidirect product of the additive group $R$ with the multiplicative group $R^*$ with respect to the (left) action of $R^* $ on $R$ by left multiplications. Set theoretically, one has $R\rtimes R^* =R\times R^* $, the group structure being given explicitly by the product $(x,y)(u,v)=(x+yu,yv)$,  the unit element $(0,1)$, and the inversion map $(x,y)^{-1}=(-y^{-1}x,y^{-1})$. We define a $\Delta$-group $BR$ as the subgroup of $R\rtimes R^*$ generated by the subset $H=R^* \times R^*$ with $k(x,y)=(y,x)$.
\end{example}
\begin{example}
Let $(G,G_\pm,\theta)$ be a symmetrically factorized group of \cite{KR}. That means that $G$ is a group with two isomorphic subgroups $G_\pm $ conjugated to each other by an involutive element $\theta\in G$, and the restriction of the multiplication  map $m\colon G_+\times G_-\to G_+G_-\subset G$ is a set-theoretical bijection, whose inverse is called the factorization map $G_+G_-\ni g\mapsto (g_+,g_-)\in G_+\times G_-$. In this case, the subgroup of $G_+$ generated by the subset $H=G_+\cap G_-G_+\theta\cap\theta G_+G_-$ is a  $\Delta$-group with $j(x)=(\theta x)_+$.
\end{example}

The paper is organized as follows. In Section~\ref{sec:10}, we show that there is a canonical construction of a $\Delta$-groupoid starting from a group and a malnormal subgroup. In Section~\ref{sec:12}, we show that two constructions in Examples~\ref{myex:1} and \ref{myex:2} are functors which admit left adjoints functors. In Sections~\ref{sec:40} and \ref{sec:41}, we reveal a representation theoretical interpretation of the $A'$-ring of the previous section in terms of a restricted class of representations of group pairs into two-by-two upper-triangular matrices with elements in arbitrary rings. In Section~\ref{sec:pres}, in analogy with group presentations, we show that $\Delta$-groupoids can be presented starting from tetrahedral objects. In Section~\ref{sec:hom}, we define an integral homology of $\Delta$-groupoids. In the construction, actions of symmetric groups in chain groups are used in an essential way. Finally, Section~\ref{sec:concl} is devoted to a general discussion of the results of this paper, their applications in knot theory, and some open questions.

\section{$\Delta$-groupoids and pairs of groups}\label{sec:10}

Recall that a subgroup $H$ of a group $G$ is called \emph{malnormal} if the condition $gHg^{-1}\cap H\ne\{1\}$ implies that $g\in H$. In fact, for any pair of groups $H\subset G$ one can associate in a canonical way another group pair $H'\subset G'$ with malnormal $H'$.
Namely, if $N$ is the maximal normal subgroup of $G$ contained in $H$, then we define $G'=G/N$ and $H'\subset G'$ is the malnormal closure of $H/N$.

\begin{lemma}
Let a subgroup $H$ of a group  $G$ be malnormal. Then, the right
action of the group $H^3$ on the set $(G\setminus H)^2$ defined by the formula
\[
(G\setminus H)^2\times H^3\ni (g,h)\mapsto gh=(h_1^{-1}g_1h_2,h_1^{-1}g_2h_3)\in (G\setminus H)^2,
\]
\[
h=(h_1,h_2,h_3)\in H^3,\quad g=(g_1,g_2)\in (G\setminus H)^2
\]
is free.
\end{lemma}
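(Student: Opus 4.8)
The plan is to prove freeness directly from the definition: I will show that if an element $h=(h_1,h_2,h_3)\in H^3$ fixes some point $g=(g_1,g_2)\in (G\setminus H)^2$, then $h$ must be the identity $(1,1,1)$. So I begin by assuming $gh=g$ and reading off the two defining equations componentwise,
\[
h_1^{-1}g_1h_2=g_1,\qquad h_1^{-1}g_2h_3=g_2.
\]
Solving these for $h_2$ and $h_3$ yields $h_2=g_1^{-1}h_1g_1$ and $h_3=g_2^{-1}h_1g_2$.

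The key step, and the only place where the malnormality hypothesis is used, is the first of these relations. Since $h_2\in H$ by assumption, we get $g_1^{-1}h_1g_1\in H$, which rewrites as $h_1\in g_1Hg_1^{-1}\cap H$. Now I invoke malnormality in its contrapositive form: because $g_1\in G\setminus H$ we have $g_1\notin H$, so the defining property forces $g_1Hg_1^{-1}\cap H=\{1\}$, whence $h_1=1$. Back-substituting into the expressions for $h_2$ and $h_3$ then gives $h_2=g_1^{-1}\cdot 1\cdot g_1=1$ and $h_3=g_2^{-1}\cdot 1\cdot g_2=1$, so $h=(1,1,1)$ and the stabilizer of $g$ is trivial.

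I do not expect a genuine obstacle here, since the whole argument reduces to a single application of malnormality; the one point that merits care is the bookkeeping of which hypothesis does the work. It is worth recording that the conclusion $h_1=1$ uses only $g_1\notin H$ (via the first component), after which $h_3=1$ follows automatically from $h_1=1$ regardless of $g_2$. Thus the freeness of the action in fact needs only one of the two components to lie outside $H$; the requirement that both $g_1$ and $g_2$ lie in $G\setminus H$ is what guarantees, separately, that the formula $gh$ lands back in $(G\setminus H)^2$, i.e. that the action is well defined. If desired, I would mention this well-definedness check (that $h_1^{-1}g_ih_{i+1}\in H$ would force $g_i\in H$, since $H$ is a subgroup) as a preliminary remark before carrying out the freeness computation above.
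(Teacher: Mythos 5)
Your proof is correct and follows essentially the same route as the paper: read off the two componentwise equations from $gh=g$, rewrite them as conjugation relations, and apply malnormality (via $g_1\notin H$) to conclude $h_1=h_2=h_3=1$. The extra remarks on well-definedness and on which hypothesis does the work are accurate but do not change the argument.
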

\begin{proof}
Let $h\in H^3$ and $g\in (G\setminus H)^2$ be such that $gh=g$. On the level of components, this corresponds to two equations $h_1^{-1}g_1h_2=g_1$ and $h_1^{-1}g_2h_3=g_2$, or equivalently
$g_1h_2g_1^{-1}=h_1$ and $g_2h_3g_2^{-1}=h_1$. Together with the  malnormality of $H$ these equations imply that $h_1=h_2=h_3=1$.
\end{proof}
We provide the set of orbits $\tilde G=(G\setminus H)^2/H^3$ with a groupoid structure as follows.
Two orbits $fH^3$, $gH^3$ are composable iff $Hf_2H=Hg_1H$ with the product
\[
fH^3gH^3=(f_1,f_2)H^3(g_1,g_2)H^3=(f_1,h_0g_2)H^3,
\]
where $h_0\in H$ is the unique element such that $f_2H=h_0g_1H$. The units are $1_{HgH}=(g,g)H^3$ and the inverse of $(g_1,g_2)H^3$ is $(g_2,g_1)H^3$. Let $\Gamma(\tilde G)$ be the underlying graph which, as a set, concides with the complement of units in $\tilde G$. Clearly, it is stable under the inversion. We define the map $j\colon \Gamma( \tilde G)\to \Gamma(  \tilde G)$ by the formula:
\[
(g_1,g_2)H^3\mapsto (g_1^{-1},g_1^{-1}g_2)H^3.
\]
\begin{proposition}
The groupoid $\mathcal{G}_{G,H}=\tilde G$ is a $\Delta$-groupoid with the distinguished generating subset $H=\Gamma( \tilde G)$ and involution $j$.
\end{proposition}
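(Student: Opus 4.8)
The plan is to verify the groupoid axioms first and then the four conditions of Definition~\ref{def:delta}, throughout exploiting the freedom to move each orbit independently by $H^3$. I would begin by recording the elementary dictionary: an orbit $(g_1,g_2)H^3$ is a unit precisely when $g_1H=g_2H$, so that the objects of $\tilde G$ are the nontrivial double cosets $HgH$, with $\mathrm{dom}((g_1,g_2)H^3)=Hg_1H$ and $\mathrm{cod}((g_1,g_2)H^3)=Hg_2H$. The well-definedness of the product rests on malnormality: if $g_1\notin H$ and $h_0g_1H=h_0'g_1H$, then $g_1^{-1}(h_0^{-1}h_0')g_1\in H$ forces $h_0=h_0'$, so the connecting element $h_0$ with $f_2H=h_0g_1H$ is unique. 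A short check that replacing $f,g$ by $H^3$-equivalent representatives multiplies $h_0$ accordingly then shows the product descends to orbits; the freeness established in the Lemma guarantees that the orbits are genuine $H^3$-torsors, which is what makes these representative manipulations legitimate.

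The key simplification, which I would isolate at the outset, is a normalization of representatives. Given a composable pair $x=(f_1,f_2)H^3$, $y=(g_1,g_2)H^3$ with $Hf_2H=Hg_1H$, I would first act on $y$ by $h_0^{-1}$ on the left to arrange $h_0=1$, and then adjust $f_2$ on the right by an element of $H$ to arrange $f_2=g_1=:a$ outright. With this normalization the product becomes mere concatenation, $(f_1,a)H^3\,(a,g_2)H^3=(f_1,g_2)H^3$, which makes associativity, the unit laws $(g,g)H^3$, and the inverse $(g_2,g_1)H^3$ immediate.

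Next I would check that $j$, and hence $k=iji$, are well defined on $\Gamma(\tilde G)$ and verify (i)--(iii). Well-definedness of $j((g_1,g_2)H^3)=(g_1^{-1},g_1^{-1}g_2)H^3$ is a direct substitution of the $H^3$-action, and one sees that $j$ sends nonunits to nonunits because $(g_1^{-1},g_1^{-1}g_2)H^3$ is a unit iff $g_2\in H$. Condition (i) is clear since $i$ swaps components and preserves the nonunit condition, and $H=\Gamma(\tilde G)$ generates $\tilde G$ because every morphism is either a unit or already lies in $H$. For (ii) I would compute directly that both $iji$ and $jij$ send $(g_1,g_2)H^3$ to $(g_2^{-1}g_1,g_2^{-1})H^3$, which also identifies $k$. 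Condition (iii) then follows because composability of $(x,y)$ reads $Hf_2H=Hg_1H$, and inverting this equality of double cosets gives $Hf_2^{-1}H=Hg_1^{-1}H$, which is exactly the composability of $(k(x),j(y))$ in view of the formulas for $k$ and $j$.

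The main obstacle is condition (iv), and this is where the normalization pays off. Writing $x=(f_1,a)H^3$ and $y=(a,g_2)H^3$ as above, I would substitute into both sides of \eqref{myeq:1}. The left-hand side $k(xy)\,i(k(y))$ is the composite of $(g_2^{-1}f_1,g_2^{-1})H^3$ with $(g_2^{-1},g_2^{-1}a)H^3$, which equals $(g_2^{-1}f_1,g_2^{-1}a)H^3$; the right-hand side $k\big(k(x)j(y)\big)$ is $k$ applied to $(a^{-1}f_1,a^{-1}g_2)H^3$, which also equals $(g_2^{-1}f_1,g_2^{-1}a)H^3$, so \eqref{myeq:1} holds. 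Finally, $H$-composability is preserved because $k(x)j(y)=(a^{-1}f_1,a^{-1}g_2)H^3$ is a nonunit iff $f_1H\neq g_2H$, and this is precisely the condition that $xy=(f_1,g_2)H^3$ be a nonunit, i.e. that $(x,y)$ be $H$-composable in the first place. I expect the only genuinely delicate points to be the bookkeeping of which representative adjustments are permitted (controlled by the Lemma) and the repeated use of malnormality to keep $h_0$ unique; the algebra in (iv) is otherwise forced once the representatives are normalized.
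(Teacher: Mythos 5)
Your proposal is correct and follows essentially the same route as the paper: a direct verification of the groupoid axioms and of conditions (i)--(iv) of Definition~\ref{def:delta}, with malnormality entering exactly where you use it (uniqueness of the connecting element $h_0$ and well-definedness of the operations on orbits). The paper's own proof writes out only the well-definedness of $j$ under the $H^3$-action (the step you compress to ``a direct substitution'') and declares the remaining verifications straightforward; your normalization of representatives to the concatenation form $(f_1,a)H^3\,(a,g_2)H^3=(f_1,g_2)H^3$ is a clean way of organizing precisely those omitted checks, and your computations of $k$, of condition (iii), and of identity~\eqref{myeq:1} are all accurate.
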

\begin{proof} We verify that the map $j$ is well defined. For any $h\in H^3$ and $g\in (G\setminus H)^2$ we have
\begin{multline*}
j(ghH^3)=j((h_1^{-1}g_1h_2,h_1^{-1}g_2h_3)H^3)\\=(h_2^{-1}g_1^{-1}h_1,h_2^{-1}g_1^{-1}h_1h_1^{-1}
g_2h_3)H^3
=(h_2^{-1}g_1^{-1}h_1,h_2^{-1}g_1^{-1}g_2h_3)H^3\\=(g_1^{-1},g_1^{-1}g_2)(h_2,h_1,h_3)H^3=
(g_1^{-1},g_1^{-1}g_2)H^3
=j(gH^3).
\end{multline*}
Verification of the other properties is straightforward.
\end{proof}
\begin{example}
 For the group $G=PSL(2,\mathbb{Z})$, let the subgroup $H$ be given by the upper triangular matrices. One can show that $H$ is malnormal and the associated $\Delta$-groupoid is isomorphic to the one of Example~\ref{myex:01}.
\end{example}

\section{$\Delta$-groupoids and rings}\label{sec:12}
In this section we show that the constructions in examples~\ref{myex:1} and \ref{myex:2} come from functors admitting left adjoints.
\begin{proposition}
The mappings $A, B\colon\myR\to\myK$ are functors which admit left adjoints $A'$ and $B'$ respectively.
\end{proposition}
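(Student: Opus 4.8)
The plan is to handle $A$ and $B$ in parallel: first check that each is a functor, then construct the left adjoint as a ring presented by generators indexed by $H$ and relations that transcribe, one for one, the conditions for a morphism into $AR$ (resp. $BR$).

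\emph{Functoriality.} For a ring homomorphism $\varphi\colon R\to R'$, restriction to units is a group homomorphism $R^*\to R'^*$; since $\varphi(1-u)=1-\varphi(u)$, it carries $(1-R^*)\cap R^*$ into $(1-R'^*)\cap R'^*$ and commutes with $k(u)=1-u$, and being a group homomorphism it commutes with $i$, hence with $j=iki$. Thus it restricts to a morphism $A\varphi\colon AR\to AR'$ in $\myK$. Likewise $(x,y)\mapsto(\varphi(x),\varphi(y))$ is a homomorphism $R\rtimes R^*\to R'\rtimes R'^*$ taking $R^*\times R^*$ into $R'^*\times R'^*$ and commuting with the flip $k(x,y)=(y,x)$, so it restricts to $B\varphi\colon BR\to BR'$. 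Preservation of identities and of composition is immediate, so $A$ and $B$ are functors $\myR\to\myK$.

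\emph{Construction of the adjoints.} Given a $\Delta$-groupoid $\mathcal{G}=(G,H,j)$, I would define $A'(\mathcal{G})$ as the unital associative ring on generators $e_x$, $x\in H$, subject to $e_xe_{i(x)}=1$ (so each $e_x$ is a unit), $e_{k(x)}=1-e_x$, and $e_xe_y=e_{xy}$ for every $H$-composable pair $(x,y)$. A morphism $f$ of $\myK$ induces the ring homomorphism $e_x\mapsto e_{f(x)}$, so $A'$ is a functor. For $B'(\mathcal{G})$ I would take two families $a_x,b_x$ ($x\in H$) with $b_xb_{i(x)}=1$, $a_{i(x)}=-b_x^{-1}a_x$, $a_{k(x)}=b_x$, $b_{k(x)}=a_x$, and $a_{xy}=a_x+b_xa_y$, $b_{xy}=b_xb_y$ for $H$-composable $(x,y)$; these are precisely the component identities forced by the group law of $R\rtimes R^*$ and by $k(x,y)=(y,x)$.

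\emph{The adjunction.} The bijection $\mathrm{Hom}_{\myR}(A'(\mathcal{G}),R)\cong\mathrm{Hom}_{\myK}(\mathcal{G},AR)$ would be set up as follows. A ring homomorphism $\psi\colon A'(\mathcal{G})\to R$ sends each $e_x$ to a unit whose complement $1-\psi(e_x)=\psi(e_{k(x)})$ is again a unit, so $x\mapsto\psi(e_x)$ maps $H$ into the distinguished subset of $AR$; the relations say exactly that this assignment commutes with $i$ and $k$, hence with $j$, and is multiplicative on $H$-composable pairs. As $H$ generates $G$, the assignment extends to a groupoid functor $\hat\psi\colon\mathcal{G}\to AR$, and conversely every morphism of $\myK$ restricts to such an assignment; the two passages are mutually inverse and natural in $\mathcal{G}$ and $R$. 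The argument for $B'$ is identical, reading off the two components $a_x,b_x$.

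\emph{Main obstacle.} The one substantial point is the well-definedness of the extension $\hat\psi$: an element of $G$ is an $H$-word taken modulo the relations of the groupoid, so I must verify that these relations are generated by inversion and by $H$-composable multiplication, i.e. that $G$ is presented by the partial multiplication on $H$ (equivalently, that no relations among the $e_x$ beyond those listed are needed). This is exactly where axioms (iii) and (iv) enter: the $\mathbb{S}_3$-action and the identity~\eqref{myeq:1} ensure that propagating a generator assignment along composable products meets no further constraint. As a consistency check one verifies that substituting $k(t)=1-t$ and $i(t)=t^{-1}$ into~\eqref{myeq:1} gives an identity valid in every ring, using only that $t$ commutes with $(1-t)^{-1}$; this confirms that morphisms into $AR$ automatically respect~\eqref{myeq:1}, so no additional relation survives. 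Once this is secured, naturality of the two bijections is routine, establishing that $A'$ and $B'$ are left adjoint to $A$ and $B$ respectively.
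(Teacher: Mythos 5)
Your functoriality argument matches the paper's, and your adjunction strategy is superficially similar, but your construction of the adjoints differs from the paper's in exactly the place where your proof breaks. The paper defines $A'G$ as a quotient of the groupoid ring $\mathbb{Z}[G]$: one generator $u_x$ for \emph{every} $x\in G$, relations $u_xu_y=u_{xy}$ for \emph{every} composable pair of $G$, and only then the relations $u_x+u_{k(x)}=1$ for $x\in H$. With that definition both directions of the correspondence $\myR(A'G,R)\leftrightarrow\myK(G,AR)$ are literal substitutions, $f\mapsto(x\mapsto f(u_x))$ and $g\mapsto(u_x\mapsto g(x))$, and no extension problem ever arises. You instead present $A'$ by generators indexed by $H$ alone, with multiplicative relations only for $H$-composable pairs, so you must extend an assignment defined on $H$ to a groupoid morphism on all of $G$. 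You correctly flag this as the main obstacle, but your resolution --- that axioms (iii) and (iv) guarantee $G$ is presented by the partial multiplication on $H$ --- is both unproved and false. Definition~\ref{def:delta} only requires $H$ to \emph{generate} $G$; nothing in the axioms controls which relations among $H$-words hold in $G$. Your ``consistency check'' (that \eqref{myeq:1} becomes a ring identity under $k(t)=1-t$, $i(t)=t^{-1}$) only verifies consistency on the \emph{target} side, i.e.\ that $AR$ really is a $\Delta$-group; it says nothing about extra relations on the \emph{source} side.

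Here is a concrete counterexample to your key claim. Let $G=\mathbb{Z}/5=\langle t\rangle$, $H=\{t,t^{-1}\}$, $j=i$. All axioms hold: (ii) is trivial since $i=j$, (iii) is automatic in a group, and (iv) is vacuous because no product of two elements of $H$ lies in $H$ (as $t^{\pm2}, 1\notin H$). Your ring is then generated by $e_t,e_{t^{-1}}$ with relations $e_te_{t^{-1}}=e_{t^{-1}}e_t=1$ and $e_{t^{-1}}=1-e_t$, i.e.\ it is $R_0:=\mathbb{Z}[e]/(e^2-e+1)\cong\mathbb{Z}[\zeta_6]\neq 0$; the relation $t^5=1$ is invisible to your presentation. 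Now take $R=R_0$. The identity map lies in $\myR(R_0,R)$, but the corresponding assignment $t\mapsto e$ extends to no group homomorphism $\mathbb{Z}/5\to R^*$, because $e$ has multiplicative order $6$; in fact $R^*=\mu_6$ contains no element of order $5$, so $\myK(G,AR)=\emptyset$ while your hom-set is nonempty, and no natural bijection can exist. Thus your $A'$ (and, by the same mechanism, your $B'$) is not a left adjoint; note that axioms (iii)--(iv) could not possibly have rescued the extension step here, since they are vacuous in this example. The paper's construction, by contrast, includes the relation $u_t^5=u_1$ coming from composability in $G$, which collapses its ring to the zero ring --- the correct value of the left adjoint, since a morphism $G\to AR$ exists only for $R=0$. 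The repair of your argument is precisely to adopt the paper's definition: generators for all of $G$ and all composable relations, which makes the extension step unnecessary.
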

\begin{proof}
The case of $A$. Let $f\colon R\to S$ be a morphism of rings. Then, obviously, $f(R^* )\subset S^* $. Besides, for any $x\in R$ we have $f(k(x))=f(1-x)=f(1)-f(x)=1-f(x)=k(f(x))$, which implies that $f(k(R^* ))=k(f(R^* ))\subset k(S^* )$.  Thus, we have a well defined morphism of $\Delta$-groups $Af=f\vert_{R^* }$. If $f\colon R\to S$, $g\colon S\to T$ are two morphisms of rings, then $A(g\circ f)=g\circ f\vert_{R^* }=g\vert_{S^* }\circ f\vert_{R^* }=Ag\circ Af$. The proof of the functoriality of $B$ is similar.

We define covariant functors $A',B'\colon \myK\to\myR$ as follows. Let $G$ be a $\Delta$-groupoid. The ring $A'G$ is the quotient ring of the groupoid ring $\mathbb{Z}[G]$  (generated over $\mathbb{Z}$ by the elements $\{ u_x\vert\ x\in G\}$ with the defining relations $u_xu_y=u_{xy}$ if $x,y$ are composable) with respect to the
additional relations  $u_x+u_{k(x)}=1$ for all $x\in H$. The ring $B'G$ is generated over $\mathbb{Z}$ by the elements $\{ u_x, v_x\vert\ x\in G\}$ with the defining relations $u_xu_y=u_{xy}, v_{xy}=u_xv_y+v_x$ if $x,y$ are composable, and $u_{k(x)}=v_x, v_{k(x)}=u_x$ for all $x\in H$. If $f\in\myK(G,M)$, we define $A'f$ and $B'f$ respectively by $A'f\colon u_x\mapsto u_{f(x)}$, and $B'f\colon u_x\mapsto u_{f(x)}, v_x\mapsto v_{f(x)}$. It is straightforward to verify the functoriality properties of these constructions.

Let us now show that $A',B'$ are respective left adjoints of $A,B$.
In the case of $A', A$ we identify bijections $\varphi_{G,R}\colon\myR(A'G,R)\simeq\myK(G,AR)$ which are natural in $G$ and $R$. We define $\varphi_{G,R}(f)\colon x\mapsto f(u_x)$, whose  inverse is given by $\varphi_{G,R}^{-1}(g)\colon u_x\mapsto g(x)$. Let  $f\in\myR(A'G,R)$.
To verify the naturality in the first argument, let $h\in\myK(M,G)$. We have $A'h\in\myR(A'M,A'G)$,
$f\circ A'h\in\myR(A'M,R)$, and $\varphi_{M,R}(f\circ A'h)\in\myK(M,AR)$ with
\[
\varphi_{M,R}(f\circ A'h)\colon M\ni x\mapsto f(A'h(u_x))=f(u_{h(x)})=\varphi_{G,R}(f)(h(x)).
\]
Thus, $\varphi_{M,R}(f\circ A'h)=\varphi_{G,R}(f)\circ h$.
  To verify the naturality in the second argument,  let  $g\in\myR(R,S)$. Then,
\[
\myK(G,AS)\ni\varphi_{G,S}(g\circ f)\colon x\mapsto g(f(u_x))
=g\vert_{R^*} (\varphi_{G,R}(f)(x)).
\]
Thus, $\varphi_{G,S}(g\circ f)=Ag\circ \varphi_{G,R}(f)$. The proof in the case of $B',B$ is similar.
\end{proof}
There is a natural transformation
$\alpha\colon A\to B$ given by
\[
\myK(AR,BR)\ni\alpha_R\colon
x\mapsto (1-x,x).
\]
The following example illustrates that these adjunctions can be interesting from the viewpoint of (co)homology theory.
\begin{example}
\[
A\mathbb{Z}_2=\emptyset,\quad B\mathbb{Z}_2=(\mathbb{Z}_2,\{\{1\}\}),\quad
A'A\mathbb{Z}_2=B'B\mathbb{Z}_2=\mathbb{Z},
\]
\[
A\mathbb{Z}=\emptyset,\quad
B\mathbb{Z}=(\mathbb{Z}_2*\mathbb{Z}_2,\{\{2\},\{12\},\{{21},{121}\}\}),
\]
\[
A'A\mathbb{Z}=\mathbb{Z},\quad B'B\mathbb{Z}=\mathbb{Z}[\mathbb{Z}_2]\simeq\mathbb{Z}[t]/(t^2-1).
\]
Here, we encode the information about a $\Delta$-group into the pair $(G,H/\mathbb{Z}_2)$, where $H/\mathbb{Z}_2$ is the set of $k$-orbits in $H$, and we identify the group elements as sequences of positive integers by using the notation ${ijk\ldots}=1_i1_j1_k\ldots$, $1_i=\iota_i(1)$, with $\iota_i\colon  \mathbb{Z}_2\to \mathbb{Z}_2*\mathbb{Z}_2$ being the $i$-th canonical inclusion.
\end{example}
\section{A representation theoretical interpretation of the $A'$-ring of pairs of groups}\label{sec:40}

Let $G$ be a group with a proper malnormal subgroup $H$ (i.e. $H\ne G$), and $\mathcal{G}_{G,H}$, the associated $\Delta$-groupoid described in Section~\ref{sec:10}. The ring $A'\mathcal{G}_{G,H}$ can be thought of as being generated over $\mathbb{Z}$ by the set of invertible elements $\{ u_{x,y}\vert\ x,y\in G\setminus H\}$, which satisfy the following relations:
\begin{equation}\label{eq:21}
u_{x,y}u_{y,z}=u_{x,z},
\end{equation}
\begin{equation}\label{eq:22}
u_{xh,y}=u_{x,yh}=u_{hx,hy}=u_{x,y},\quad \forall h\in H,
\end{equation}
\begin{equation}\label{eq:23}
u_{y^{-1}x,y^{-1}}+u_{x,y}=1,\quad x\ne y,
\end{equation}
Notice that $u_{x,x}=1$ for any $x\in G\setminus H$.

Define another ring $R_{G,H}$ generated over $\mathbb{Z}$ by the set $\{s_g,v_g\vert\ g\in G\}$
subject to the following defining relations
\begin{equation}\label{eq:30}
s_1=1,\quad s_xs_y=s_{xy},\ \forall x,y\in G,
\end{equation}
the element $v_x$ is zero if $x\in H$ and invertible otherwise, and
\begin{equation}\label{eq:31}
v_{xy}=v_{y}+v_xs_y, \quad \forall x,y\in G.
\end{equation}

For any ring $R$, let
$R^*\ltimes R$ be the semidirect product of the group of invertible elements $R^*$ and the additive group $R$ with respect to the (right) action of $R^*$ on $R$ by right multiplications. As a set, the group $R^*\ltimes R$ is the product set $R^*\times R$ with the multiplication rule $(x,y)(x',y')=(xx',yx'+y')$. This construction is functorial in the sense that for any ring homomorphism $f\colon R\to S$ there corresponds a group homomorphism $\tilde f\colon R^*\ltimes R\to S^*\ltimes S$.
\begin{definition}\label{def:10}
Given a pair of groups $(G,H)$, where $H$ is a malnormal subgroup of $G$, a ring $R$ and a group homomorphism
\[
\rho\colon G\ni x\mapsto (\alpha(x),\beta(x))\in R^*\ltimes R.
\]
We say that $\rho$ is a \emph{special representation} if it satisfies the following condition: for any $x\in G$, the element $\beta(x)$ is zero for $x\in H$ and invertible otherwise.
\end{definition}
In the case of the ring $R_{G,H}$ we have the canonical special representation
\[
\sigma_{G,H}\colon G\to R_{G,H}^*\ltimes R_{G,H},\quad x\mapsto(s_x,v_x),
\]
which is universal in the sense that for any ring $R$ and any special representation $\rho\colon G\to R^*\ltimes R$ there exists a unique ring homomorphism $f\colon R_{G,H}\to R$ such that $\rho=\tilde f\circ \sigma_{G,H}$.

Let us define a map $q$ of the generating set of the ring $A'\mathcal{G}_{G,H}$ into the ring $R_{G,H}$ by the formula
\[
q(u_{x,y})=v_{x^{-1}}v_{y^{-1}}^{-1}.
\]
\begin{proposition}
The map $q$ extends to a unique ring homomorphism \(
q\colon  A'\mathcal{G}_{G,H}\to R_{G,H}
\).
\end{proposition}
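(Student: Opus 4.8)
The plan is to verify that the assignment $q(u_{x,y})=v_{x^{-1}}v_{y^{-1}}^{-1}$ is compatible with each of the defining relations \eqref{eq:21}, \eqref{eq:22}, \eqref{eq:23} of $A'\mathcal{G}_{G,H}$. Since the elements $u_{x,y}$ generate $A'\mathcal{G}_{G,H}$ as a ring, such compatibility automatically produces a (necessarily unique) extension to a ring homomorphism, so no separate uniqueness argument is needed. First I would record that $q$ is well defined and lands in invertible elements: because $H$ is a subgroup, $x,y\in G\setminus H$ forces $x^{-1},y^{-1}\in G\setminus H$, whence $v_{x^{-1}},v_{y^{-1}}$ are invertible in $R_{G,H}$ and the expression $v_{x^{-1}}v_{y^{-1}}^{-1}$ makes sense.

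The sole computational input is the relation \eqref{eq:31}, $v_{xy}=v_y+v_xs_y$, together with the fact that $v_h=0$ for $h\in H$ (and in particular $v_1=0$). From it I would extract three consequences to be used repeatedly. For $h\in H$ one gets $v_{h^{-1}x^{-1}}=v_{x^{-1}}$ and $v_{x^{-1}h^{-1}}=v_{x^{-1}}s_{h^{-1}}$, the vanishing of $v_{h^{-1}}$ killing the relevant term. Applying \eqref{eq:31} to the product $y^{-1}y=1$ gives $0=v_y+v_{y^{-1}}s_y$, i.e. $v_y=-v_{y^{-1}}s_y$; inverting this and using $s_y^{-1}=s_{y^{-1}}$ (from \eqref{eq:30}) yields the crucial relation $s_yv_y^{-1}=-v_{y^{-1}}^{-1}$.

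With these in hand the verifications are short. Relation \eqref{eq:21} is immediate, the middle factor $v_{y^{-1}}^{-1}v_{y^{-1}}=1$ telescoping the product to $v_{x^{-1}}v_{z^{-1}}^{-1}$. For \eqref{eq:22}, the first two equalities reduce to $v_{h^{-1}x^{-1}}=v_{x^{-1}}$ and $v_{h^{-1}y^{-1}}=v_{y^{-1}}$, while the third follows after factoring $v_{x^{-1}h^{-1}}=v_{x^{-1}}s_{h^{-1}}$ and $v_{y^{-1}h^{-1}}=v_{y^{-1}}s_{h^{-1}}$ and then cancelling via $s_{h^{-1}}s_h=1$. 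The substantive case is \eqref{eq:23}: expanding $q(u_{y^{-1}x,y^{-1}})=v_{x^{-1}y}v_y^{-1}$ by \eqref{eq:31} produces $1+v_{x^{-1}}s_yv_y^{-1}$, and adding $q(u_{x,y})=v_{x^{-1}}v_{y^{-1}}^{-1}$ leaves the residual term $v_{x^{-1}}\bigl(s_yv_y^{-1}+v_{y^{-1}}^{-1}\bigr)$, which vanishes precisely by the crucial relation above, so the sum equals $1$. I expect this last step to be the main obstacle, as it is the only place where the group inversion interacts nontrivially with \eqref{eq:31}; everything else is bookkeeping built from the vanishing $v_h=0$ and cancellation of $s$-factors.
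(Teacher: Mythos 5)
Your proposal is correct and follows essentially the same route as the paper's proof: the same telescoping for \eqref{eq:21}, the same special cases $v_{hx}=v_x$, $v_{xh}=v_xs_h$ of \eqref{eq:31} for \eqref{eq:22}, and the same key identity (the paper writes it as $s_y=-v_{y^{-1}}^{-1}v_y$, you as $s_yv_y^{-1}=-v_{y^{-1}}^{-1}$, both extracted from \eqref{eq:31} at $x=y^{-1}$) to settle \eqref{eq:23}. The only additions are your explicit remarks on well-definedness and automatic uniqueness, which the paper leaves implicit.
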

\begin{proof}
The elements $q(u_{x,y})$ are manifestly invertible and satisfy the identities  $q(u_{x,z})=q(u_{x,y})q(u_{y,z})$. The consistency of the map $q$ with relations~\eqref{eq:22} is easily seen from the following properties of the elements $v_x$ (which are special cases of equation~\eqref{eq:31}):
\[
v_{hx}=v_x,\quad v_{xh}=v_xs_h,\quad \forall h\in H.
\]
The identity $q(u_{y^{-1}x,y^{-1}})+q(u_{x,y})=1$ is equivalent to
\[
v_{x^{-1}y}=v_y-v_{x^{-1}}v_{y^{-1}}^{-1}v_y
\]
which, in turn, is equivalent to the defining relation~\eqref{eq:31} after taking into account the formula
\[
s_y=-v_{y^{-1}}^{-1}v_y.
\]
The latter formula follows from the particular case of the relation~\eqref{eq:31} corresponding to $x=y^{-1}$.
\end{proof}
Let us fix an element $g\in G\setminus H$ and define a map $f_g$ of the generating set of the ring $R_{G,H}$ into
the ring $A'\mathcal{G}_{G,H}$
by the following formulae
\begin{equation}\label{eq:33}
f_g(s_x)=\left\{\begin{array}{rl}
u_{g,xg}=u_{x^{-1}g,g},&\mathrm{if}\  x\in H;\\
-u_{g,x}u_{x^{-1},g},&\mathrm{otherwise},
\end{array}\right.
\end{equation}
and
\begin{equation}\label{eq:32}
f_g(v_{x})=\left\{\begin{array}{rl}
0,&\mathrm{if}\  x\in H;\\
u_{x^{-1},g},&\mathrm{otherwise}.
\end{array}\right.
\end{equation}
\begin{proposition}
For any $g\in G\setminus H$ the map $f_g$ extends to a unique ring homomorphism \(
f_g\colon R_{G,H}\to A'\mathcal{G}_{G,H}
\).
\end{proposition}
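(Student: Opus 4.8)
The plan is to use that $R_{G,H}$ is given by generators $\{s_x,v_x\mid x\in G\}$ subject to the relations \eqref{eq:30} and \eqref{eq:31}, together with the requirements that $v_x=0$ for $x\in H$ and that $v_x$ be invertible for $x\notin H$. Accordingly, to produce a ring homomorphism out of $R_{G,H}$ it suffices to check that the elements prescribed by \eqref{eq:33} and \eqref{eq:32} satisfy all of these; uniqueness is then automatic since the $s_x$ and $v_x$ generate $R_{G,H}$ as a ring. First I would record the well-definedness on generators: the two expressions $u_{g,xg}=u_{x^{-1}g,g}$ offered for $f_g(s_x)$ when $x\in H$ agree by \eqref{eq:22} (multiply both indices on the left by $x\in H$); $f_g(s_1)=u_{g,g}=1$; $f_g(v_h)=0$ for $h\in H$; and both $f_g(v_x)=u_{x^{-1},g}$ and $f_g(s_x)=-u_{g,x}u_{x^{-1},g}$ are manifestly invertible for $x\notin H$, which is exactly the invertibility that the presentation of $R_{G,H}$ demands.

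Before the case analysis I would isolate the two tools coming from \eqref{eq:21} and \eqref{eq:22}: the telescoping identity $u_{a,b}u_{b,c}=u_{a,c}$, and the $H$-invariances $u_{ah,b}=u_{a,b}$, $u_{a,bh}=u_{a,b}$, $u_{ha,hb}=u_{a,b}$ for $h\in H$. The verification of the multiplicativity $f_g(s_x)f_g(s_y)=f_g(s_{xy})$ and of the crossed relation $f_g(v_{xy})=f_g(v_y)+f_g(v_x)f_g(s_y)$ then splits according to the membership of $x$, $y$, and $xy$ in $H$; note that if exactly one of $x,y$ lies in $H$ then necessarily $xy\notin H$, since $H$ is a subgroup. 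In each such mixed case the relation collapses, after a single telescoping and one $H$-invariance move, to an identity among the $u$'s: for instance, for $x\in H$, $y\notin H$ one rewrites $u_{g,y}=u_{xg,xy}$ and telescopes $u_{g,xg}u_{xg,xy}=u_{g,xy}$, while $u_{y^{-1}x^{-1},g}=u_{y^{-1},g}$ by \eqref{eq:22}; the remaining mixed cases and the subcase $x,y\notin H$ with $xy\in H$ (where $u_{x^{-1},y}=1$ by \eqref{eq:22}) are of the same flavor.

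The main obstacle is the case in which $x$, $y$, and $xy$ all lie outside $H$, where neither adjacent pair in the resulting three-factor product $u_{g,x}u_{x^{-1},y}u_{y^{-1},g}$ telescopes, and relation \eqref{eq:23} must be invoked to break the chain; the needed non-degeneracies ($x^{-1}\ne y$, $x\ne xy$) are guaranteed because the elements lie outside $H$. Concretely, I would apply \eqref{eq:23} in the form $u_{x^{-1},y}=1-u_{y^{-1}x^{-1},y^{-1}}$ to expand the middle factor, obtaining $f_g(s_x)f_g(s_y)=u_{g,x}\bigl(u_{y^{-1},g}-u_{y^{-1}x^{-1},g}\bigr)$ after one further telescoping; matching this with $f_g(s_{xy})=-u_{g,xy}u_{y^{-1}x^{-1},g}$ reduces to $u_{g,x}u_{y^{-1},g}=(u_{g,x}-u_{g,xy})u_{y^{-1}x^{-1},g}$, which I would close by writing $u_{g,xy}=u_{g,x}u_{x,xy}$ and applying \eqref{eq:23} once more in the form $1-u_{x,xy}=u_{y^{-1},y^{-1}x^{-1}}$, followed by a last telescoping. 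The crossed relation runs in parallel, the only substantive case again being $x,y,xy\notin H$: there $f_g(v_x)f_g(s_y)=-u_{x^{-1},y}u_{y^{-1},g}$, and \eqref{eq:23} converts $1-u_{x^{-1},y}$ into $u_{y^{-1}x^{-1},y^{-1}}$, so that $f_g(v_y)+f_g(v_x)f_g(s_y)=u_{y^{-1}x^{-1},y^{-1}}u_{y^{-1},g}=u_{y^{-1}x^{-1},g}=f_g(v_{xy})$. I expect the bookkeeping of which index gets inverted at each use of \eqref{eq:23}, rather than any conceptual difficulty, to be the step most prone to error, and would arrange the computation so that every application of \eqref{eq:23} is immediately followed by a telescoping that collapses a three-term product back to two terms.
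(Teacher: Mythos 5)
Your proposal is correct and takes essentially the same approach as the paper: both reduce the statement to verifying the defining relations of $R_{G,H}$ on generators, split into cases by membership of $x$, $y$, $xy$ in $H$, dispatch the mixed cases with \eqref{eq:21}--\eqref{eq:22}, and settle the case $x,y,xy\notin H$ by two applications of \eqref{eq:23} combined with telescoping. The only cosmetic differences are that the paper treats the subcase $x,y\notin H$, $xy\in H$ of multiplicativity by reducing it to an already-proved mixed case via $f_g(s_{xy})=f_g(s_{xy}s_y^{-1})f_g(s_y)$, and in the hard case transforms $f_g(s_{xy})$ into $f_g(s_x)f_g(s_y)$ (using an inverse) rather than expanding the product as you do.
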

\begin{proof}
1. Clearly, $f_g(1)=f_g(s_1)=u_{g,g}=1$ and $f_g(s_x^{-1})=f_g(s_{x^{-1}})=f_g(s_x)^{-1}$ if $x\not\in H$. For $x\in H$ we have
\[
f_g(s_x^{-1})=f_g(s_{x^{-1}})=u_{xg,g}=u_{g,xg}^{-1}=f_g(s_x)^{-1}.
\]

2. We check the identity $f_g(s_xs_{y})=f_g(s_{xy})=f_g(s_x)f_g(s_y)$ in five different cases:
\begin{description}
\item[($x,y\in H$)]
\[
f_g(s_{xy})=u_{g,xyg}=u_{x^{-1}g,yg}=u_{x^{-1}g,g}u_{g,yg}=f_g(s_x)f_g(s_y);
\]
\item[($x\in H$, $y\not\in H$)]
\[
f_g(s_{xy})=-u_{g,xy}u_{y^{-1}x^{-1},g}=-u_{x^{-1}g,y}u_{y^{-1},g}=-u_{x^{-1}g,g}u_{g,y}u_{y^{-1},g}=
f_g(s_x)f_g(s_y);
\]
\item[($x\not\in H$, $y\in H$)]
\[
f_g(s_{xy})=-u_{g,xy}u_{y^{-1}x^{-1},g}=-u_{g,x}u_{x^{-1},yg}=-u_{g,x}u_{x^{-1},g}u_{g,yg}=
f_g(s_x)f_g(s_y);
\]
\item[($x\not\in H$, $y\not\in H$, $xy\in H$)]
\[
f_g(s_{xy})=f_g(s_{xy})f_g(s_y^{-1})f_g(s_y)=f_g(s_{xy}s_y^{-1})f_g(s_y)=f_g(s_x)f_g(s_y);
\]
\item[($x\not\in H$, $y\not\in H$, $xy\not\in H$)]
\begin{multline*}
f_g(s_{xy})=-u_{g,xy}u_{y^{-1}x^{-1},g}=-u_{g,x}u_{xy,x}^{-1}u_{y^{-1}x^{-1},y^{-1}}u_{y^{-1},g}\\
=-u_{g,x}(1-u_{y,x^{-1}})^{-1}(1-u_{x^{-1},y})u_{y^{-1},g}
=u_{g,x}(u_{y,x^{-1}}-1)^{-1}(u_{y,x^{-1}}-1)u_{x^{-1},y}u_{y^{-1},g}\\
=u_{g,x}u_{x^{-1},y}u_{y^{-1},g}
=u_{g,x}u_{x^{-1}g}u_{g,y}u_{y^{-1},g}=f_g(s_x)f_g(s_y).
\end{multline*}
\end{description}
3. We check the identity $f_g(v_{xy})=f_g(v_y)+f_g(v_x)f_g(s_y)$ in five different cases:
\begin{description}
\item[($x,y\in H$)] it is true trivially;
\item[($x\in H$, $y\not\in H$)]
\[
f_g(v_{xy})=u_{y^{-1}x^{-1},g}=u_{y^{-1},g}=f_g(v_y);
\]
\item[($x\not\in H$, $y\in H$)]
\[
f_g(v_{xy})=u_{y^{-1}x^{-1},g}=u_{x^{-1},yg}=u_{x^{-1},g}u_{g,yg}=f_g(v_x)f_g(s_y);
\]
\item[($x\not\in H$, $y\not\in H$, $xy\in H$)]
\begin{multline*}
f_g(v_y)+f_g(v_x)f_g(s_y)=u_{y^{-1},g}-u_{x^{-1},g}u_{g,y}u_{y^{-1},g}\\=
(1-u_{x^{-1},y})u_{y^{-1},g}=(1-u_{x^{-1}xy,y})u_{y^{-1},g}=0;
\end{multline*}
\item[($x\not\in H$, $y\not\in H$, $xy\not\in H$)]
\begin{multline*}
f_g(v_{xy})=u_{y^{-1}x^{-1},g}=u_{y^{-1}x^{-1},y^{-1}}u_{y^{-1},g}\\
=(1-u_{x^{-1},y})u_{y^{-1},g}=
(1-u_{x^{-1},g}u_{g,y})u_{y^{-1},g}=f_g(v_y)+f_g(v_x)f_g(s_y).
\end{multline*}
\end{description}
\end{proof}
Associated with any invertible element  $t$ of the ring $R_{G,H}$ there is an endomorphism
$r_t\colon R_{G,H}\to R_{G,H}$ defined on the generating elements by the formulae
\[
r_t(s_x)=ts_xt^{-1},\quad r_t(v_x)=v_x t^{-1}.
\]
Note that, in general, $r_t$ can have a non-trivial kernel, for example, if $g\in G\setminus H$, then
$1-v_g\in \mathrm{ker}(r_{v_g})$.
\begin{proposition}
For any $g\in G\setminus H$ the following identities of ring homomorphisms
\begin{equation}\label{eq:700}
f_g\circ q=\mathrm{id}_{A'\mathcal{G}_{G,H}},\quad q\circ f_g=r_{v_{g^{-1}}}
\end{equation}
hold true.
\end{proposition}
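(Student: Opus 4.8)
The plan is to exploit that both identities in~\eqref{eq:700} are equalities of ring homomorphisms, so it suffices to verify them on generators. The composite $f_g\circ q$ and $\mathrm{id}_{A'\mathcal{G}_{G,H}}$ are homomorphisms on $A'\mathcal{G}_{G,H}$, whose generators are the $u_{x,y}$ with $x,y\in G\setminus H$; likewise $q\circ f_g$ and $r_{v_{g^{-1}}}$ are homomorphisms (resp. endomorphism) on $R_{G,H}$, generated by the $s_x$ and $v_x$ with $x\in G$. Thus the whole proof reduces to a finite case check on these generators, using the defining relations together with the explicit formulas~\eqref{eq:33} and~\eqref{eq:32}.

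For the first identity I would evaluate $f_g\circ q$ on $u_{x,y}$. Since $x,y\in G\setminus H$ forces $x^{-1},y^{-1}\in G\setminus H$, applying~\eqref{eq:32} to $q(u_{x,y})=v_{x^{-1}}v_{y^{-1}}^{-1}$ gives $f_g(q(u_{x,y}))=u_{x,g}\,u_{y,g}^{-1}$. Using $u_{y,g}u_{g,y}=u_{y,y}=1$ to rewrite $u_{y,g}^{-1}=u_{g,y}$, and then the cocycle relation~\eqref{eq:21}, this collapses to $u_{x,g}u_{g,y}=u_{x,y}$, which is exactly $\mathrm{id}(u_{x,y})$.

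For the second identity I would check the two families $v_x$ and $s_x$, splitting each according to whether $x\in H$. The $v_x$ cases are the quickest: for $x\in H$ both $q(f_g(v_x))$ and $r_{v_{g^{-1}}}(v_x)=v_xv_{g^{-1}}^{-1}$ vanish since $v_x=0$, while for $x\notin H$ formula~\eqref{eq:32} yields $q(f_g(v_x))=q(u_{x^{-1},g})=v_xv_{g^{-1}}^{-1}=r_{v_{g^{-1}}}(v_x)$ directly. For $s_x$ with $x\in H$, I would expand $v_{g^{-1}x^{-1}}=v_{x^{-1}}+v_{g^{-1}}s_{x^{-1}}=v_{g^{-1}}s_x^{-1}$ via~\eqref{eq:31} and $v_{x^{-1}}=0$, so that $q(f_g(s_x))=v_{g^{-1}}\bigl(v_{g^{-1}}s_x^{-1}\bigr)^{-1}=v_{g^{-1}}s_xv_{g^{-1}}^{-1}=r_{v_{g^{-1}}}(s_x)$.

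The main obstacle is the remaining case $s_x$ with $x\notin H$. Here~\eqref{eq:33} and the definition of $q$ give $q(f_g(s_x))=-v_{g^{-1}}v_{x^{-1}}^{-1}v_xv_{g^{-1}}^{-1}$, and to identify this with $r_{v_{g^{-1}}}(s_x)=v_{g^{-1}}s_xv_{g^{-1}}^{-1}$ one needs precisely the relation $s_x=-v_{x^{-1}}^{-1}v_x$ that was established in the proof that $q$ is well defined (the specialization of~\eqref{eq:31} at $x=x^{-1}$). Invoking that formula matches the two sides and completes the verification, establishing both identities of~\eqref{eq:700}.
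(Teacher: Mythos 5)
Your proposal is correct and takes essentially the same approach as the paper: both verify the two identities on the generators $u_{x,y}$, $s_x$, $v_x$ using formulas~\eqref{eq:33}, \eqref{eq:32}, the relation~\eqref{eq:21}, and the identity $s_x=-v_{x^{-1}}^{-1}v_x$ extracted from~\eqref{eq:31}. The only cosmetic differences are that you spell out the trivial cases (e.g.\ $v_x$ with $x\in H$) that the paper omits, and in the case $s_x$ with $x\in H$ you compute via the representative $u_{g,xg}$ and the expansion $v_{g^{-1}x^{-1}}=v_{g^{-1}}s_x^{-1}$, where the paper uses $u_{x^{-1}g,g}$ and $v_{g^{-1}x}=v_{g^{-1}}s_x$ -- the same calculation up to relabeling.
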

\begin{proof}
Applying the left hand sides of the identities to be proved to the generating elements of the corresponding rings, we have
\[
f_g(q(u_{x,y}))=f_g(v_{x^{-1}}v_{y^{-1}}^{-1})=u_{x,g}u_{y,g}^{-1}=u_{x,y},\quad \forall x,y\not\in H,
\]
\[
q(f_g(s_x))=q(u_{x^{-1}g,g})=v_{g^{-1}x}v_{g^{-1}}^{-1}=v_{g^{-1}}s_xv_{g^{-1}}^{-1},\quad x\in H,
\]
\[
q(f_g(s_x))=q(-u_{g,x}u_{x^{-1},g})=-v_{g^{-1}}v_{x^{-1}}^{-1}v_xv_{g^{-1}}^{-1}
=v_{g^{-1}}s_xv_{g^{-1}}^{-1},\quad \forall x\not\in H,
\]
and
\[
q(f_g(v_x))=q(u_{x^{-1},g})=v_xv_{g^{-1}}^{-1},\quad \forall x\not\in H.
\]
\end{proof}
\begin{corollary}
The ring homomorphism $q\colon A'\mathcal{G}_{G,H}\to R_{G,H}$ is injective.
\end{corollary}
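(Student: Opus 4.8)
The plan is to deduce injectivity of $q$ directly from the two identities established in the preceding Proposition, namely $f_g\circ q=\mathrm{id}_{A'\mathcal{G}_{G,H}}$ and $q\circ f_g=r_{v_{g^{-1}}}$, for an arbitrarily fixed $g\in G\setminus H$. The first of these identities is the essential input: it exhibits $f_g$ as a left inverse to $q$ in the category of rings. A ring homomorphism that possesses a left inverse is automatically injective, since if $q(a)=q(b)$ then applying $f_g$ gives $a=f_g(q(a))=f_g(q(b))=b$. Thus the corollary follows at once.

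Concretely, I would argue as follows. Suppose $a\in A'\mathcal{G}_{G,H}$ lies in the kernel of $q$, so $q(a)=0$. Applying the ring homomorphism $f_g$ and using the first identity of~\eqref{eq:700}, we obtain
\[
a=\mathrm{id}_{A'\mathcal{G}_{G,H}}(a)=(f_g\circ q)(a)=f_g(q(a))=f_g(0)=0,
\]
whence $a=0$ and $q$ is injective. Note that only the first of the two identities in~\eqref{eq:700} is needed for this conclusion; the second identity, $q\circ f_g=r_{v_{g^{-1}}}$, describes the composite in the opposite order and shows that $f_g$ is not in general a two-sided inverse (consistent with the observed nontriviality of $\ker(r_{v_g})$), but it plays no role in establishing injectivity of $q$.

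There is, strictly speaking, no substantive obstacle remaining: all the work has already been carried out in verifying that $f_g$ and $q$ are well-defined ring homomorphisms and in checking the identity $f_g\circ q=\mathrm{id}$. The only point that merits a word of care is that the conclusion is independent of the auxiliary choice of $g\in G\setminus H$. This is immediate because the statement of the corollary does not mention $g$ at all, and the existence of a left inverse for any single value of $g$ already suffices; the hypothesis $H\ne G$ guarantees that such a $g$ exists, so the argument is never vacuous. Hence the corollary is a formal consequence of the retraction $f_g$ constructed above.
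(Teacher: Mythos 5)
Your proof is correct and takes exactly the route the paper intends: the corollary is stated without proof precisely because it is an immediate consequence of the first identity in~\eqref{eq:700}, namely that $f_g$ is a left inverse of $q$, so $q$ must be injective. Your added remark that the hypothesis $H\ne G$ (assumed at the start of Section~\ref{sec:40}) guarantees the existence of some $g\in G\setminus H$, making the argument non-vacuous, is the only point of care and you handle it correctly.
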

\begin{corollary}
For any $g\in G\setminus H$, the kernel of the ring homomorphism $f_g\colon R_{G,H}\to A'\mathcal{G}_{G,H}$ is generated by the element $1-v_{g^{-1}}$.
\end{corollary}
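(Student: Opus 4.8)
The plan is to exploit the two identities of the preceding Proposition, namely $f_g\circ q=\mathrm{id}_{A'\mathcal{G}_{G,H}}$ and $q\circ f_g=r_{v_{g^{-1}}}$, together with the injectivity of $q$ just established in the previous corollary. Write $t=v_{g^{-1}}$; since $g\in G\setminus H$ and $H$ is a subgroup we have $g^{-1}\in G\setminus H$, so $t$ is invertible. Let $I\subset R_{G,H}$ be the two-sided ideal generated by $1-t$. The goal is to prove $\ker(f_g)=I$.

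The first step is to reduce the problem to the endomorphism $r_t$. Injectivity of $q$ gives $\ker(f_g)=\ker(q\circ f_g)=\ker(r_t)$: indeed $f_g(a)=0$ forces $r_t(a)=q(f_g(a))=0$, and conversely $r_t(a)=q(f_g(a))=0$ forces $f_g(a)=0$ by injectivity of $q$. With this in hand, the inclusion $I\subseteq\ker(f_g)$ is immediate: one computes $r_t(1-t)=1-v_{g^{-1}}t^{-1}=1-tt^{-1}=0$ from the defining formula $r_t(v_x)=v_xt^{-1}$, so $1-t\in\ker(r_t)=\ker(f_g)$, and since $\ker(f_g)$ is a two-sided ideal it contains all of $I$.

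The crux is the reverse inclusion, and the key claim I would isolate is that $a\equiv r_t(a)\pmod I$ for every $a\in R_{G,H}$. I would first verify this on the generators, using $1-t^{-1}=-(1-t)t^{-1}$: from $r_t(s_x)=ts_xt^{-1}$ one gets $s_x-r_t(s_x)=(1-t)s_xt^{-1}-s_x(1-t)t^{-1}\in I$, and from $r_t(v_x)=v_xt^{-1}$ one gets $v_x-r_t(v_x)=-v_x(1-t)t^{-1}\in I$. Then I would observe that the set $\{a:a-r_t(a)\in I\}$ is a subring of $R_{G,H}$: it contains $1$, is additively closed, and is multiplicatively closed because $ab-r_t(a)r_t(b)=(a-r_t(a))b+r_t(a)(b-r_t(b))$ with both summands lying in the two-sided ideal $I$. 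As this subring contains all generators $s_x,v_x$, it must equal $R_{G,H}$. Finally, if $a\in\ker(f_g)=\ker(r_t)$ then $a=(a-r_t(a))+r_t(a)=a-r_t(a)\in I$, which yields $\ker(f_g)\subseteq I$ and completes the argument.

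The main obstacle is the claim $a\equiv r_t(a)\pmod I$, since $r_t$ is only defined on generators and one must propagate the congruence to arbitrary ring elements. The conceptual point that makes it routine is that $r_t$ is a \emph{twist by} $t$, which is necessarily trivialised modulo the ideal $(1-t)$; once this is recognized, the subring closure computation does the rest. I would therefore expect no further difficulty beyond bookkeeping, and the two inclusions together give $\ker(f_g)=I$, the ideal generated by $1-v_{g^{-1}}$.
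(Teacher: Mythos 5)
Your proof is correct and follows essentially the same route as the paper: reduce to $\mathrm{ker}(r_t)$ via the identities $f_g\circ q=\mathrm{id}$ and $q\circ f_g=r_t$, then use the identity $ab-r_t(a)r_t(b)=(a-r_t(a))b+r_t(a)(b-r_t(b))$ together with the same two generator computations to place every kernel element in the ideal $(1-t)$. The only cosmetic difference is that the paper invokes the idempotence $r_t\circ r_t=r_t$ at the point where you instead compute $r_t(1-t)=0$ directly and package the propagation step as a subring argument.
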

\begin{proof}
Let $t=v_{g^{-1}}$.
Equations~(\ref{eq:700}), imply that $\mathrm{ker}(f_g)=\mathrm{ker}(r_t)$ and  $r_t\circ r_t=r_t$.
The latter equation means that any $x\in \mathrm{ker}(r_t)$ has the form $y-r_t(y)$ for some $y$. The identity
\[
xy-r_t(xy)=(x-r_t(x))y+r_t(x)(y-r_t(y)),
\]
implies that that $\mathrm{ker}(r_t)$ is generated by elements $x-r_t(x)$, with $x$ running in a generating set for the ring  $R_{G,H}$. Finally, the identities
\[
s_x-r_t(s_x)=s_x-ts_xt^{-1}=(1-t)s_x-ts_xt^{-1}(1-t),\quad v_x-r_t(v_x)=-v_xt^{-1}(1-t),
\]
imply that $\mathrm{ker}(r_t)$ is generated be only one element $1-t=1-v_{g^{-1}}$.
\end{proof}
\begin{corollary}
For any $g\in G\setminus H$, the rings $A'\mathcal{G}_{G,H}$ and $R_{G,H}/(1-v_g)$ are isomorphic.
\end{corollary}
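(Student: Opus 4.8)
The plan is to realize the asserted isomorphism as the map induced by $f_g$ (up to a harmless inversion of the index) on a quotient of $R_{G,H}$, relying on the two identities in~\eqref{eq:700} and on the kernel computation of the preceding corollary. The crucial point is that the identity $f_g\circ q=\mathrm{id}_{A'\mathcal{G}_{G,H}}$ already forces $f_g$ to be surjective: every $a\in A'\mathcal{G}_{G,H}$ equals $f_g(q(a))$ and hence lies in the image of $f_g$.

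Granted surjectivity, the first isomorphism theorem for rings gives $R_{G,H}/\ker(f_g)\cong A'\mathcal{G}_{G,H}$, and the previous corollary identifies $\ker(f_g)$ with the two-sided ideal generated by $1-v_{g^{-1}}$. This yields $A'\mathcal{G}_{G,H}\cong R_{G,H}/(1-v_{g^{-1}})$ directly.

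To pass from the index $g^{-1}$ to the index $g$ of the statement, I would invoke the fact that $H$ is a subgroup, so that $g\in G\setminus H$ holds if and only if $g^{-1}\in G\setminus H$. The whole construction, and in particular the kernel corollary, therefore applies verbatim with $g^{-1}$ substituted for $g$, replacing the generator of the kernel by $1-v_{(g^{-1})^{-1}}=1-v_g$ and producing the stated isomorphism $A'\mathcal{G}_{G,H}\cong R_{G,H}/(1-v_g)$. I do not expect any genuine difficulty here: the substance is carried by~\eqref{eq:700} and the kernel corollary, and the remaining work is the routine combination of surjectivity, the first isomorphism theorem, and the relabelling $g\leftrightarrow g^{-1}$.
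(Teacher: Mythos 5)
Your proof is correct and is precisely the argument the paper intends (the corollary is stated without explicit proof, as an immediate consequence of the identity $f_g\circ q=\mathrm{id}_{A'\mathcal{G}_{G,H}}$, the first isomorphism theorem, and the preceding kernel corollary). The relabelling $g\leftrightarrow g^{-1}$, justified because $H$ is a subgroup, is exactly the right way to reconcile the index in the statement with the generator $1-v_{g^{-1}}$ of $\ker(f_g)$.
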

\begin{example}
Consider the group pair $(G,H)$, where
\[
G=\langle a,b\vert\ a^2=b^2=1\rangle\simeq\mathbb{Z}_2*\mathbb{Z}_2,
\]
and $H=\langle a\rangle\simeq \mathbb{Z}_2$, a malnormal subgroup of $G$. Any element of the group $G$ has the form $(ab)^ma^n$ for unique $m\in\mathbb{Z}$, $n\in\{0,1\}$. One can show that
\[
R_{G,H}\simeq\mathbb{Q}[x,x^{-1}],\quad v_{(ab)^ma^n}\mapsto (-1)^nmx,\quad s_{(ab)^ma^n}\mapsto (-1)^n,
\]  while $q(u_{(ab)^ma^n,(ab)^ka^l})\mapsto m/k$, $m,k\ne 0$. Thus, $A'\mathcal{G}_{G,H}\simeq\mathbb{Q}$.
\end{example}

\section{Weakly special representations of group pairs}\label{sec:41}
In the case of arbitrary group pairs we shall use a weakened version of special representations.
\subsection{The group pairs $(G_\rho,H_\rho)$} To each group pair homomorphism of the form
\begin{equation}\label{eq:590}
\rho\colon (G,H) \to (R^*\ltimes R,R^*),\quad G\ni g\mapsto (\alpha(g),\beta(g))\in R^*\ltimes R,
\end{equation} where $R$ is a ring,
we associate a group pair $(G_\rho,H_\rho)$, where
\[
G_\rho=G/\mathrm{ker}(\rho),\quad H_\rho=\beta^{-1}(0)/\mathrm{ker}(\rho)\subset G_\rho.
\]
The set $\beta\circ\alpha^{-1}(1)\subset R$ is an additive subgroup, and the set  $\alpha(G)\subset R^*$ is a multiplicative subgroup. These groups fit into the following short exact sequence of group homomorphisms
\begin{equation}\label{eq:600}
0\to \beta\circ\alpha^{-1}(1)\to G_\rho \to \alpha(G)\to 1,
\end{equation}
which sheds some light on the structure of $G_\rho$. In particular, there exists another short exact sequence of group homomorphisms
\[
1\to N\to G\ltimes \beta\circ\alpha^{-1}(1)\to G_\rho\to 1,
\]
 where the semidirect product is taken with respect to the right group action by group automorphisms
\[
\beta\circ\alpha^{-1}(1)\times G\ni (x,g)\mapsto x\alpha(g)\in\beta\circ\alpha^{-1}(1),
\]
and  $N$ (the image thereof) trivially intersects $\beta\circ\alpha^{-1}(1)$. We also have a group isomorphism
\[
H_\rho\simeq \alpha\circ\beta^{-1}(0)\subset \alpha(G).
\]
For a given group pair $(G,H)$ the set of homomorphisms~\eqref{eq:590} is partially ordered with respect to the relation:
\begin{equation}\label{eq:610}
\rho<\sigma\ \Leftrightarrow\ \exists\ \mathrm{exact}\
(1,1)\to (N,M)\to (G_\rho,H_\rho)\to (G_\sigma,H_\sigma)\to (1,1).
\end{equation}
\subsection{The universal ring $\hat R_{G,H}$}

Let $(G,H)$ be a group pair. The ring $\hat R_{G,H}$ is generated over $\mathbb{Z}$ by the set $\{s_g,v_g\vert\ g\in G\}$
subject to the following defining relations:
\begin{enumerate}
\item the elements $s_x$ are invertible;
\item the map
\(
\sigma_{G,H}\colon G\ni x\mapsto (s_x,v_x)\in \hat R_{G,H}^*\ltimes \hat R_{G,H}
\)
is a group homomorphism;
\item for any $x\in H$,
\(
v_{x}=0.
\)
\end{enumerate}
Notice that, according to this definition, a non-zero generating element $v_x$ is not assumed to be invertible.
This ring has the following universal property: for any group pair homomorphism~\eqref{eq:590} there exists a unique ring homomorphism $f_\rho\colon \hat R_{G,H}\to R$ such that $\rho=\tilde f_\rho\circ \sigma_{G,H}$.
The partial order~\eqref{eq:610} can alternatively be characterized by an equivalent condition:
\begin{equation}\label{eq:613}
\rho < \sigma\quad\Leftrightarrow\quad \mathrm{ker}(f_\rho)\subset\mathrm{ker}(f_\sigma).
\end{equation}
\subsection{Weakly special representations}
\begin{definition}
 Given a pair of groups $(G,H)$, where $H$ is a proper (i.e. $H\ne G$) but not necessarily malnormal subgroup of $G$, a nontrivial (i.e. $0\ne 1$) ring $R$, and a group pair homomorphism~\eqref{eq:590}.
We say that $\rho$ is a \emph{weakly special representation} if it satisfies the following conditions: (1) $\beta(G)\subset R^*\sqcup\{0\}$; (2) $\beta(G)\ne\{0\}$.
\end{definition}
Any special representation (if $H\ne G$) is also weakly special. For any weakly special representation $\rho$, the group
\(
H_{\rho}
\) is a malnormal subgroup in $G_\rho$, and the induced representation of the pair $(G_\rho,H_\rho)$ is special in the sense of Definition~\ref{def:10}.

To a group pair $(G,H)$ we associate  a set valued invariant $W(G,H)$ consisting of minimal (with respect to the partial ordering~\eqref{eq:610}) weakly special representations (considered up to equivalence). Notice that if $\rho\in W(G,H)$, then the ring $A'\mathcal{G}_{G_\rho,H_\rho}$ is non-trivial (i.e. $0\ne 1$).
Taking into account characterization~\eqref{eq:613}, there is a bijection between the set $W(G,H)$ and the set of ring homomorphisms $f_\rho$ with minimal kernel.

The following proposition will be useful for calculations.
\begin{proposition}\label{pro:100}
Given a group pair $(G,H)$ with $H\ne G$, a non-trivial ring $R$, and a weakly special representation
\[
\rho\colon (G,H)\to (R^*\ltimes R,R^*),\quad G\ni g\mapsto (\alpha(g),\beta(g))\in R^*\ltimes R.
\]
Assume that
the subring $f_\rho(\hat R_{G,H})\subset R$ is generated over $\mathbb{Z}$ by the set $\alpha(G)$.
\begin{description}
\item[(i)]
If $\beta\circ\alpha^{-1}(1)\ne\{ 0\}$ then $f_\rho(\hat R_{G,H})$ is a skew-field.
\item[(ii)]
If $1\in \beta\circ\alpha^{-1}(1)$, then $G_\rho\simeq\alpha(G)\ltimes \beta\circ\alpha^{-1}(1)$.
\end{description}
\end{proposition}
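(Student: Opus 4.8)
The plan is to work throughout in the ring $A:=f_\rho(\hat R_{G,H})\subseteq R$ and to extract from $\rho$ the cocycle relations forced by $\rho$ being a homomorphism into $R^*\ltimes R$: the map $\alpha$ is multiplicative, $\beta(gg')=\beta(g)\alpha(g')+\beta(g')$, and $\beta(H)=0$. Since $\alpha(g)$ has the two-sided inverse $\alpha(g^{-1})=f_\rho(s_{g^{-1}})\in A$, the subset $\alpha(G)$ is a group of units of $A$, so under the standing hypothesis $A$ is exactly the $\mathbb{Z}$-linear span of $\alpha(G)$; note also $\beta(G)=f_\rho(\{v_g\})\subseteq A$. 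I introduce $V:=\beta\circ\alpha^{-1}(1)=\beta(\ker\alpha)$, which by the cocycle relation (restricted to $\ker\alpha$, where $\beta$ becomes additive) is an additive subgroup of $A$.

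The structural heart of both parts is that \emph{$V$ is a right ideal of $A$}. First I would verify the conjugation identity: for $x\in\ker\alpha$ and any $g\in G$ a direct computation in $R^*\ltimes R$ gives $\rho(g^{-1}xg)=(1,\beta(x)\alpha(g))$, hence $\beta(g^{-1}xg)=\beta(x)\alpha(g)$ with $g^{-1}xg\in\ker\alpha$. Thus $V$ is stable under right multiplication by every $\alpha(g)$, and since $A$ is the $\mathbb{Z}$-span of $\alpha(G)$ this yields $VA\subseteq V$. I would record here the other key input: because $\rho$ induces a \emph{special} representation of the malnormal pair $(G_\rho,H_\rho)$, the nonzero values $\beta(g)$ are the images of the invertible generators of $R_{G_\rho,H_\rho}$, so each nonzero element of $V\subseteq\beta(G)$ is a \emph{unit of $A$}, with inverse in $A$.

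For part (i), assume $V\neq\{0\}$ and fix $0\neq t\in V$, a unit of $A$. Given any $0\neq a\in A$, the right-ideal property gives $ta\in V$, and $ta\neq0$ since $t$ is a unit; hence $ta$ is a nonzero element of $V$ and therefore a unit of $A$. Setting $b:=(ta)^{-1}t\in A$ one checks $ab=ba=1$, so $a$ is invertible in $A$; consequently $A$ is a skew-field. The hard part will be exactly the claim used in the previous paragraph that nonzero elements of $V$ are units \emph{of $A$} (not merely of the ambient $R$): this is where the hypothesis that $A$ is generated by $\alpha(G)$, together with the special structure on $(G_\rho,H_\rho)$ and the invertibility of the generators $v_x$ ($x\notin H$) of $R_{G_\rho,H_\rho}$, must be invoked, and it is the only nonformal step.

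For part (ii), suppose $1\in V$. Since $V$ is a right ideal of $A$ containing $1$, it follows that $V=A$, whence $\beta(G)\subseteq A=V$. Now the set $S:=\{(x,w)\mid x\in\alpha(G),\ w\in V\}$ is a subgroup of $R^*\ltimes R$: closure under the product $(x,w)(x',w')=(xx',wx'+w')$ uses $V\alpha(G)\subseteq V$, and $S$ is canonically isomorphic to the semidirect product $\alpha(G)\ltimes V$ for the right action $w\cdot x=wx$ appearing in~\eqref{eq:600}. Because $\beta(G)\subseteq V$ we get $\rho(G)\subseteq S$; conversely $\rho(G)$ contains $\{1\}\times V$ (as $V=\beta(\ker\alpha)$) and surjects onto $\alpha(G)$, so $\rho(G)=S$. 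Finally $G_\rho\cong\rho(G)=S\cong\alpha(G)\ltimes\beta\circ\alpha^{-1}(1)$. This part is routine once the right-ideal property is established, requiring no invertibility input beyond $V=A$.
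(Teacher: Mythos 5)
Your part (i) contains a genuine gap, and it sits exactly at the step you yourself flag as ``the only nonformal step'': the claim that every nonzero element of $V=\beta\circ\alpha^{-1}(1)$ is a unit of $A=f_\rho(\hat R_{G,H})$ \emph{with inverse in $A$}. The justification you propose---that the nonzero values $\beta(g)$ are images of the invertible generators $v_x$ of $R_{G_\rho,H_\rho}$---does not deliver this: $f_\rho$ is a homomorphism out of $\hat R_{G,H}$, in which (as the paper explicitly stresses) the generators $v_x$ are \emph{not} inverted, so the inverses $\beta(g)^{-1}$ are only guaranteed to lie in the image of $R_{G_\rho,H_\rho}$ in $R$, which can be strictly larger than $A$. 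In fact the claim is not merely unjustified but false under the stated hypotheses: take $G=\mathbb{Z}=\langle g\rangle$, $H=\{1\}$, $R=\mathbb{Q}$, $\rho(g^n)=(1,2n)$. This $\rho$ is weakly special, $A=f_\rho(\hat R_{G,H})=\mathbb{Z}$ is generated over $\mathbb{Z}$ by $\alpha(G)=\{1\}$, and $V=2\mathbb{Z}\neq\{0\}$, yet no nonzero element of $V$ is a unit of $A$ (here the image of $R_{G_\rho,H_\rho}$ in $R$ is all of $\mathbb{Q}$, strictly larger than $A$). What weak specialness does give---and what the paper's own proof establishes---is invertibility \emph{in $R$}: fixing $t_0\in\alpha^{-1}(1)$ with $x_0=\beta(t_0)\neq0$ and writing $x=\sum_i m_i\alpha(g_i)$, one gets $x_0x=\beta\bigl(\prod_i g_i^{-1}t_0^{m_i}g_i\bigr)$, a nonzero value of $\beta$, hence a unit of $R$; so every nonzero $x\in A$ is invertible in $R$. (Your right-ideal property is correct and is precisely the paper's relations \eqref{eq:500}, \eqref{eq:510} combined with the spanning hypothesis; the paper's computation is the special case $x_0A\subseteq V$.) Note that this weaker, $R$-invertibility reading is also how the proposition is applied later in the paper, where $A'\mathcal{G}_{G_\rho,H_\rho}$ is described as the localization of the image at its nonzero elements; your strengthening to invertibility inside $A$ is the one step that cannot be obtained, and the example above shows it cannot be repaired.

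Your part (ii), by contrast, is correct and follows a genuinely different route from the paper's. The paper proves (ii) by explicitly splitting the exact sequence \eqref{eq:600}: it corrects an arbitrary set-theoretical section $\xi$ of $\alpha$ by the element $\prod_i f_i(x)^{-1}t_0^{-m_i(x)}f_i(x)$ attached to a decomposition $\beta(\xi(x))=\sum_i m_i(x)\alpha(f_i(x))$, and checks that the corrected map is a homomorphic section. You instead observe that $1\in V$ together with the right-ideal property forces $V=A\supseteq\beta(G)$, so that $\rho(G)$ coincides with the subgroup $\{(x,w)\mid x\in\alpha(G),\ w\in V\}\simeq\alpha(G)\ltimes V$ of $R^*\ltimes R$, whence $G_\rho\simeq\rho(G)\simeq\alpha(G)\ltimes\beta\circ\alpha^{-1}(1)$. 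This avoids constructing a section altogether and is, if anything, cleaner; both arguments ultimately rest on the same inputs, namely \eqref{eq:500}, \eqref{eq:510} and the hypothesis that $A$ is the $\mathbb{Z}$-span of $\alpha(G)$.
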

\begin{proof}
{\bf (i)}
We remark that $\alpha^{-1}(1)$ is a normal subgroup of $G$ having the following properties:
 \begin{equation}\label{eq:500}
 \beta(g^{-1}tg)=\beta(t)\alpha(g),\quad \forall g\in G,\ \forall t\in \alpha^{-1}(1).
 \end{equation}
 \begin{equation}\label{eq:510}
 \beta(t_1t_2)=\beta(t_1)+\beta(t_2),\quad \forall t_1,t_2\in\alpha^{-1}(1).
 \end{equation}
 Let $t_0\in \alpha^{-1}(1)$ be such that $x_0=\beta(t_0)$ is invertible. As any element $x\in f_\rho(\hat R_{G,H})$ can be written in the form
 \[
 x=\sum_{i=1}^n m_i\alpha(g_i),\quad m_i\in \mathbb{Z},\ g_i\in G,
 \]
we have
 \begin{multline*}
 x_0x=\sum_{i=1}^n m_ix_0\alpha(g_i)=\sum_{i=1}^n m_i\beta(t_0)\alpha(g_i)=\sum_{i=1}^n \beta(t_0^{m_i})\alpha(g_i)\\=
 \sum_{i=1}^n \beta(g_i^{-1}t_0^{m_i}g_i)=\beta(\prod_{i=1}^ng_i^{-1}t_0^{m_i}g_i)=\beta(t_x),\quad
 t_x=\prod_{i=1}^ng_i^{-1}t_0^{m_i}g_i.
 \end{multline*}
 Thus, if $x=x_0^{-1}\beta(t_x)\ne 0$, then it is  invertible.

 {\bf (ii)} Let $t_0\in\alpha^{-1}(1)$ be such that $\beta(t_0)=1$. We show that the exact sequence~\eqref{eq:600} splits.

 Let $\xi\colon \alpha(G)\to G$ be a set-theoretical section to $\alpha$ (i.e. $\alpha\circ\xi=\mathrm{id}_{\alpha(G)}$). For any $x\in\alpha(G)$ fix a (finite) decomposition
 \[
 \beta(\xi(x))=\sum_{i}m_i(x)\alpha(f_i(x)),
 \]
 where $m_i(x)\in\mathbb{Z}$, $f_i(x)\in G$.
 We define
 \[
 \sigma\colon \alpha(G)\to G_\rho,\quad x\mapsto\pi_\rho\left(\xi(x)\prod_{i}f_i(x)^{-1}t_0^{-m_i(x)}f_i(x)\right),
 \]
 where $\pi_\rho\colon G\to G_\rho$ is the canonical projection. Then, it is straightforward to see that $\sigma$
 is a group homomorphism such that $\alpha\circ\sigma=\mathrm{id}_{\alpha(G)}$.
\end{proof}

Below, we give two examples, coming from knot theory, which indicate the fact that the set $W(G,H)$ can be an interesting and relatively tractable invariant.

\begin{example}[the trefoil knot $3_1$]
Consider the pair of groups $(G,H)$, where
\[
G=\langle a,b\vert \ a^2=b^3\rangle
\] is the fundamental group of the complement of the trefoil knot, and $H=\langle ab^{-1},a^2\rangle$, the peripheral subgroup generated by the meridian $m=ab^{-1}$ and the longitude $l=a^2(ab^{-1})^{-6}$. As the element $a^2$ is central, the subgroup $H$ is not malnormal in $G$. If we take the quotient group with respect to the center
\[
G/\langle a^2\rangle\simeq \mathbb{Z}_2*\mathbb{Z}_3\simeq PSL(2,\mathbb{Z}),
\]
then the image of the subgroup $H/\langle a^2\rangle\simeq\mathbb{Z}$ is malnormal, and it is identified with the subgroup of upper triangular matrices in $PSL(2,\mathbb{Z})$. Thus, one can construct the $\Delta$-groupoid $\mathcal{G}_{\tilde G,\tilde H}$ (which is isomorphic to the one of Example~\ref{myex:01}), but the corresponding $A'$-ring happens to be trivial (i.e. $0=1$). This is a consequence of the result to be proved below: the set $W(G,H)$ consists of a single element $\rho_0$ such that, the $A'$-ring of the pair $(G_{\rho_0},H_{\rho_0})$ is isomorphic to the field $\mathbb{Q}[t]/(\Delta_{3_1}(t))$, where
\(
\Delta_{3_1}(t)=t^2-t+1
\)
is the Alexander polynomial of the trefoil knot.

The ring $\hat R_{G,H}$ admits the following finite presentation: it is generated over $\mathbb{Z}$ by four elements $s_a,s_b, v_a,v_b$, of which $s_a,s_b$ are invertible, subject to four relations
\[
s_a^2=s_b^3,\quad v_{a}=v_{b},\quad v_a(1+s_a)=v_b(1+s_b+s_b^2)=0.
\]
We consider a ring homomorphism
\[
\phi\colon \hat R_{G,H}\to \mathbb{Z}[t]/(\Delta_{3_1}(t)),\quad
s_a\mapsto -1,\quad s_b\mapsto -t^{-1},\quad v_a\mapsto 1,\quad v_b\mapsto 1.
\]
\begin{proposition}
(i) For any weakly special representation $\rho'$ there exists an equivalent representation $\rho$ such that the ring homomorphism $f_\rho$ factorizes through $\phi$, i.e. there exists a unique ring homomorphism $h_\rho\colon \mathbb{Z}[t]/(\Delta_{3_1}(t))\to R$ such that
$f_\rho=h_\rho\circ\phi$.

(ii) The kernel of the group homomorphism $\tilde\phi\circ\sigma_{G,H}$ is generated by $a^2$ and $(ab^{-1})^6$ with the quotient group pair $(\tilde G,\tilde H)$,
\[
\tilde G=\langle a,b\vert\ a^2=b^3=(ab^{-1})^6=1\rangle,\quad
\tilde H=\langle ab^{-1}\rangle,
\]
where $\tilde H$  is malnormal in $\tilde G$.

(iii) The ring $A'\mathcal{G}_{\tilde G,\tilde H}$ is isomorphic to the field $\mathbb{Q}[t]/(\Delta_{3_1}(t))$.
\end{proposition}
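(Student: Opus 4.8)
The plan is to dispatch the three parts in order, leaning on the finite presentation of $\hat R_{G,H}$ recorded above and on Proposition~\ref{pro:100}.

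\emph{Part (i).} Given a weakly special $\rho'$, I pass to the image ring of $f_{\rho'}$ and read off the four relations of $\hat R_{G,H}$: in particular $v_a=v_b=:v$, $v(1+s_a)=0$ and $v(1+s_b+s_b^2)=0$. First I show $v$ is invertible: if $v=0$ then $\beta$ vanishes on the generators $a,b$, hence (by the cocycle identity $\beta(gh)=\beta(g)\alpha(h)+\beta(h)$) on all of $G$, contradicting $\beta(G)\ne\{0\}$. Invertibility of $v$ then forces $s_a=-1$ and $1+s_b+s_b^2=0$. Next I replace $\rho'$ by its conjugate $\rho$ under $(v,0)\in R^*\ltimes R$; conjugation alters neither $\ker\rho'$ nor $\beta^{-1}(0)$, so $\rho$ is equivalent to $\rho'$, while now $\beta(a)=\beta(b)=1$. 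Setting $h_\rho(t)=-\alpha(b)^{-1}$, the relation $1+\alpha(b)+\alpha(b)^2=0$ is precisely $\Delta_{3_1}(-\alpha(b)^{-1})=0$, so $h_\rho\colon\mathbb{Z}[t]/(\Delta_{3_1})\to R$ is well defined; since $f_\rho$ and $h_\rho\circ\phi$ agree on the four generators $s_a,s_b,v_a,v_b$ they agree on all of $\hat R_{G,H}$. Uniqueness of $h_\rho$ follows from surjectivity of $\phi$ (already $\phi(s_b)=-t^{-1}$ generates the target).

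\emph{Part (ii).} A direct computation gives $\rho_0(a)=(-1,1)$, $\rho_0(b)=(-t^{-1},1)$, whence $\rho_0(ab^{-1})=(t,0)$ and $\rho_0(a^2)=\rho_0((ab^{-1})^6)=(1,0)$ (using $t^3=-1$ in $\mathbb{Z}[t]/(\Delta_{3_1})$). Thus the normal closure $K$ of $\{a^2,(ab^{-1})^6\}$ lies in the kernel, and $G/K=\tilde G$ because imposing $a^2=1$ turns $a^2=b^3$ into $b^3=1$. For the reverse inclusion I invoke Proposition~\ref{pro:100}(ii): the element $(ab^{-1})^{-3}a$ maps to $(1,1)$, so $1\in\beta\circ\alpha^{-1}(1)$, and since $\beta\circ\alpha^{-1}(1)$ is a $\langle t\rangle$-submodule (by~\eqref{eq:500}) closed under addition it equals $\mathbb{Z}[t]=\mathbb{Z}[\zeta_6]$; with $\alpha(G)=\langle t\rangle\cong\mathbb{Z}_6$ this yields $G_{\rho_0}\cong\mathbb{Z}_6\ltimes\mathbb{Z}[\zeta_6]$. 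This is exactly the $(2,3,6)$ von Dyck group $\tilde G$, and since crystallographic groups are Hopfian the surjection $\tilde G\to G_{\rho_0}$ is an isomorphism, so $\ker(\tilde\phi\circ\sigma_{G,H})=K$. Malnormality of $\tilde H=\langle ab^{-1}\rangle=\{(t^k,0)\}$ is immediate in this model: conjugating $(t^k,0)$ with $k\not\equiv0$ by $(u,w)$ gives $(t^k,wu^{-1}(t^k-1))$, which lies in $\tilde H$ only when $w=0$, i.e. when the conjugator already lies in $\tilde H$.

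\emph{Part (iii).} Malnormality of $\tilde H$ makes $\mathcal{G}_{\tilde G,\tilde H}$ available, and by the Corollary of Section~\ref{sec:40} one has $A'\mathcal{G}_{\tilde G,\tilde H}\cong R_{\tilde G,\tilde H}/(1-v_a)$. I present $R_{\tilde G,\tilde H}$ on $s_a,s_b,v_a,v_b$; the relations $a^2=b^3=1$, $ab^{-1}\in\tilde H$ and invertibility of $v_a$ again give $s_a=-1$, $v:=v_a=v_b$ invertible, $1+s_b+s_b^2=0$. Writing a general element as $c^k t$ with $c=ab^{-1}$ and $t$ in the translation lattice $\mathbb{Z}[\zeta_6]$, the cocycle relation together with $v_c=0$ gives $v_{c^k t}=v_t=v\,(m+n s_c)$, where $s_c=-s_b^{-1}$ is a primitive sixth root of unity and $(m,n)$ are the coordinates of $t$. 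Requiring $v_g$ invertible for every $g\notin\tilde H$, i.e. for every $t\ne0$, forces every nonzero element of $\mathbb{Z}[s_c]\cong\mathbb{Z}[\zeta_6]$ to be invertible; as $\mathbb{Z}[\zeta_6]$ is a domain this is its field of fractions $\mathbb{Q}(\zeta_6)=\mathbb{Q}[t]/(\Delta_{3_1})$. Hence $R_{\tilde G,\tilde H}\cong\mathbb{Q}(\zeta_6)[v,v^{-1}]$, and the quotient by $1-v$ yields $A'\mathcal{G}_{\tilde G,\tilde H}\cong\mathbb{Q}[t]/(\Delta_{3_1})$.

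The main obstacle is the exact determination of $R_{\tilde G,\tilde H}$ in (iii): one must check that the invertibility constraints on the $v_g$ over all of $\tilde G$ impose precisely the localization of the Eisenstein ring to its fraction field and nothing more (in particular that the ring neither collapses to $0$ nor loses $v$ as a free invertible parameter), and one must handle the a priori noncommutativity of the universal ring before landing in the commutative quotient. The already-computed case $(\mathbb{Z}_2*\mathbb{Z}_2,\langle a\rangle)$, where $R_{G,H}\cong\mathbb{Q}[x,x^{-1}]$ collapses to $A'\cong\mathbb{Q}$ under $v_g=1$ by the very same mechanism, serves as a reassuring template.
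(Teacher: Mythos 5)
Parts (i) and (ii) of your proposal are correct. Part (i) is essentially the paper's own argument (derive $s_a=-1$ and $1+s_b+s_b^2=0$ from invertibility of $\xi=\beta(a)=\beta(b)$, normalize $\xi=1$ by an equivalence, then read off $h_\rho$). Part (ii) is a genuinely different route: the paper proves injectivity by exhibiting the explicit isomorphism $\tilde G\simeq\mathbb{Z}_6\ltimes\mathbb{Z}^2$ (via $s\mapsto ab^{-1}$, $t_1\mapsto babab$, $t_2\mapsto bab^{-1}a$) and computing the image $(t^k,(m+nt^{-1})t^k)$ of a general normal form, whereas you identify $G_{\rho_0}\simeq\mathbb{Z}_6\ltimes\mathbb{Z}[t]$ from Proposition~\ref{pro:100}(ii) and close with Hopficity of crystallographic groups, quoting the classical identification of the $(2,3,6)$ von Dyck group with $\mathbb{Z}_6\ltimes\mathbb{Z}^2$. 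This works, and your explicit malnormality check is a welcome addition (the paper leaves it implicit); the trade-off is that you outsource to a classical fact exactly what the paper's normal-form computation proves, and you should record that the hypothesis of Proposition~\ref{pro:100} (that $f_{\rho_0}(\hat R_{G,H})$ is generated over $\mathbb{Z}$ by $\alpha(G)$) holds, which is immediate since $t\in\alpha(G)$.

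Part (iii) has a genuine gap: the assertion $R_{\tilde G,\tilde H}\cong\mathbb{Q}(\zeta_6)[v,v^{-1}]$ is false, because no defining relation of $R_{\tilde G,\tilde H}$ forces $v=v_a$ to commute with $s_b$. Unwinding the relations exactly as you did, a special representation of $(\tilde G,\tilde H)$ in an arbitrary ring $R$ amounts to a pair $\omega,v\in R$ with $1+\omega+\omega^2=0$, $v\in R^*$, and $m+n(1+\omega)$ invertible for every $(m,n)\ne(0,0)$; commutation of $\omega$ with $v$ is nowhere imposed. Concretely, in $R=M_2\bigl(\mathbb{Q}[t]/(\Delta_{3_1}(t))\bigr)$ take $\omega=\mathrm{diag}(t^2,t^4)$ (two distinct primitive cube roots of unity) and $v=\left(\begin{smallmatrix}0&1\\1&0\end{smallmatrix}\right)$; then $m+n(1+\omega)$ is diagonal with determinant $m^2+mn+n^2\neq0$, so $a\mapsto(-1,v)$, $b\mapsto(\omega,v)$ is a special representation of $(\tilde G,\tilde H)$ in which $\omega v\ne v\omega$. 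By the universal property of $R_{\tilde G,\tilde H}$ from Section~\ref{sec:40}, this representation factors through $R_{\tilde G,\tilde H}$, so $s_bv_a\ne v_as_b$ there: the ring is noncommutative, in particular not a Laurent ring over a field, and your final sentence does not follow as written. (You flagged noncommutativity as ``the main obstacle,'' but the proposal never actually deals with it.)

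The conclusion is salvageable from your own computations provided you quotient before you localize. The quotient $R_{\tilde G,\tilde H}/(1-v_a)$, which is what the Corollary of Section~\ref{sec:40} actually requires, is the universal ring for special representations with $\beta(a)=1$; by your (correct) formula $v_{c^k\tau}=v_a(m+ns_c)$ it is generated by $s_b$, subject to $1+s_b+s_b^2=0$, together with inverses of the nonzero elements $m+ns_c$ of $\mathbb{Z}[s_c]$, and since the inverse of an element of a commutative subring commutes with that subring, this quotient \emph{is} commutative: it is the localization of $\mathbb{Z}[s_c]\cong\mathbb{Z}[t]/(\Delta_{3_1}(t))$ at all nonzero elements, i.e. $\mathbb{Q}[t]/(\Delta_{3_1}(t))$. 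Alternatively, follow the paper, which never computes $R_{\tilde G,\tilde H}$ at all: by Proposition~\ref{pro:100}(i) the image of any weakly special representation is a skew-field, so by part (i) the $A'$-ring is the localization at nonzero elements of a quotient of the commutative domain $\mathbb{Z}[t]/(\Delta_{3_1}(t))$, and minimality selects the zero ideal, yielding the field $\mathbb{Q}[t]/(\Delta_{3_1}(t))$.
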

\begin{proof}
(i) Let $R$ be a non-trivial ring and
\[
\rho\colon G\ni g\mapsto (\alpha(g),\beta(g))\in R^*\ltimes R,
\]a weakly special representation. We have
\[
\beta(a)=\beta(ab^{-1}b)=\beta(ab^{-1})\alpha(b)+\beta(b)=\beta(b),
\]
and
\[
0=\beta(a^2)=\beta(a)(\alpha(a)+1),\quad
0=\beta(b^3)=\beta(b)(\alpha(b)^2+\alpha(b)+1).
\]
The element  $\xi=\beta(a)=\beta(b)$ is invertible, since otherwise $\xi=0$ and $\beta^{-1}(0)=G$.
Thus, $\alpha(a)=-1$ and $\alpha(b)$ is an element satisfying the equation $\Delta_{3_1}(-\alpha(b)^{-1})=0$. Replacing $\rho$ by an equivalent representation, we can assume that $\xi=1$.

The ring homomorphism $f_\rho$ is defined by the images of the generating elements
\[
s_a\mapsto -1,\quad s_b\mapsto \alpha(b),\quad v_a\mapsto 1,\quad v_b\mapsto 1,
\]
and it is easy to see that we have a factorization $f_\rho=h_\rho\circ\phi$, with a unique ring homomorphism
\[
h_\rho\colon \mathbb{Z}[t]/(\Delta_{3_1}(t))\to R,\quad t\mapsto -\alpha(b)^{-1}.
\]

(ii) It is easily verified that $a^2,(ab^{-1})^6\in\mathrm{ker}(\tilde\phi\circ\sigma_{G,H})$. We remark an isomorphism
\[
\tilde G\simeq\langle s,t_1,t_2\vert\ s^6=1,\ t_1t_2=t_2t_1,\ st_1=t_2s,\ t_1s t_2=t_2s\rangle\simeq \mathbb{Z}_6\ltimes\mathbb{Z}^2
\]
given, for example, by the formulae:
\[
s\mapsto ab^{-1},\ t_1\mapsto babab,\ t_2\mapsto bab^{-1}a,
\]
and the induced from $\tilde\phi\circ\sigma_{G,H}$ group homomorphism takes the form
\[
s\mapsto (t,0),\quad t_1\mapsto (1,1),\quad t_2\mapsto (1,t^{-1}),
\]
so that a generic element  $t_1^mt_2^ns^k$, $m,n\in\mathbb{Z}$, $k\in\mathbb{Z}_6$,  has the image
\(
(t^k,(m+nt^{-1})t^{k}).
\)
The latter is the identity element $(1,0)$ if and only if $k=0\pmod 6$ and $m=n=0$.

(iii) The pair $(G,H)$ and any weakly special representation $\rho$ satisfy the conditions of proposition~\ref{pro:100}, and thus the ring $A'\mathcal{G}_{G_\rho,H_\rho}$ is the localization of a homomorphic image of the ring $\mathbb{Z}[t]/(\Delta_{3_1}(t))$ at all non-zero elements. The ring $\mathbb{Z}[t]/(\Delta_{3_1}(t))$ itself is a commutative integral domain, and thus its minimal quotient ring corresponds to the zero ideal. In this way, we come to the field
$\mathbb{Q}[t]/(\Delta_{3_1}(t))$ which is the $A'$-ring associated to the only minimal weakly special representation $\rho_0$ with $(G_{\rho_0},H_{\rho_0})=(\tilde G,\tilde H)$.
\end{proof}
\begin{corollary}
The set $W(G,H)$ is a singleton consisting of a minimal weakly special representation $\rho_0$ such that $H\cap\mathrm{ker}(\rho_0)=\langle m^6,l\rangle$.
\end{corollary}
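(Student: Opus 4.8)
The plan is to unpack the content of the corollary by tracking what the preceding proposition has already established about the representation $\rho_0$, and then to compute the intersection $H\cap\mathrm{ker}(\rho_0)$ directly from the kernel description in part (ii). First I would recall that part (i) shows every weakly special representation is equivalent to one factoring through $\phi$, so there is at most one minimal weakly special representation up to equivalence; part (iii) then identifies its $A'$-ring as the field $\mathbb{Q}[t]/(\Delta_{3_1}(t))$ and names the underlying group pair $(\tilde G,\tilde H)$. Since the field is non-trivial, $\rho_0$ is genuinely weakly special, and since it is the unique minimal one, $W(G,H)=\{\rho_0\}$. This establishes the ``singleton'' assertion immediately from the proposition.

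Next I would pin down $\mathrm{ker}(\rho_0)$ as a subgroup of $G$. By construction $\mathrm{ker}(\rho_0)=\mathrm{ker}(\tilde\phi\circ\sigma_{G,H})$, which part (ii) identifies as the normal subgroup generated by $a^2$ and $(ab^{-1})^6$. The task is therefore to compute $H\cap\mathrm{ker}(\rho_0)$ where $H=\langle m,l\rangle$ with $m=ab^{-1}$ the meridian and $l=a^2(ab^{-1})^{-6}$ the longitude. I would use that $a^2$ is central in $G$, so that $\langle a^2\rangle$ is a normal (indeed central) subgroup, and work in the abelianization of $H$ or directly in the free abelian peripheral subgroup. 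Writing a general element of $H$ as $m^p l^q = (ab^{-1})^p\bigl(a^2(ab^{-1})^{-6}\bigr)^q$ and using centrality of $a^2$, this equals $a^{2q}(ab^{-1})^{p-6q}$; I would then determine precisely which such elements lie in $\mathrm{ker}(\rho_0)$ by applying the explicit homomorphism $\mathbb{Z}_6\ltimes\mathbb{Z}^2$ of part (ii), under which $m=ab^{-1}\mapsto s$ has order $6$ and $a^2\mapsto 1$. The condition for $m^p l^q$ to die is then a congruence on $p-6q$ modulo $6$ together with vanishing in the $\mathbb{Z}^2$ part, which should collapse to $6\mid p$ with no constraint on $q$, giving $\langle m^6,l\rangle$.

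The main obstacle I anticipate is the bookkeeping in translating between the two presentations: the element $l=a^2(ab^{-1})^{-6}$ must be shown to lie in $\mathrm{ker}(\rho_0)$, which is where the specific generators $a^2$ and $(ab^{-1})^6$ of the kernel enter, and I must verify that $l$ is literally a product of (conjugates of) these two generators rather than merely mapping to the identity under some map. Concretely, $l=a^2\cdot (ab^{-1})^{-6}$ is visibly the product of the generator $a^2$ and the inverse of the generator $(ab^{-1})^6$, so $l\in\mathrm{ker}(\rho_0)$ without any conjugation needed; this is the cleanest way to see one inclusion. For the reverse inclusion I would argue that modulo $\mathrm{ker}(\rho_0)$ the meridian image $s$ has exact order $6$, so $m^p\in\mathrm{ker}(\rho_0)$ forces $6\mid p$, and then use $l\in\mathrm{ker}(\rho_0)$ to absorb the longitude freely. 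Combining both inclusions yields $H\cap\mathrm{ker}(\rho_0)=\langle m^6,l\rangle$, completing the proof.
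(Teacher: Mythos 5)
Your proof is correct and follows exactly the route the paper intends: the corollary is stated there without proof as an immediate consequence of the preceding proposition, and your argument supplies precisely those details in the natural way. Your key steps — uniqueness of the minimal representation from parts (i) and (iii), the identification $\mathrm{ker}(\rho_0)=\mathrm{ker}(\tilde\phi\circ\sigma_{G,H})$, the observation that $l=a^2\cdot\bigl((ab^{-1})^6\bigr)^{-1}$ is literally a product of the two kernel generators, and the order-$6$ argument (via the explicit homomorphism to $\mathbb{Z}_6\ltimes\mathbb{Z}^2$, under which the meridian maps to $(t,0)$ with $t$ of exact order $6$) forcing $6\mid p$ — all match the computations already carried out in the proposition's proof.
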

\begin{remark}
The group pair $(\tilde G,\tilde H)$ is the quotient of the pair $(G,H)$ with respect to the normal subgroup of $G$ generated by the center of $G$ and the longitude $l=a^2(ab^{-1})^{-6}$.
\end{remark}
\end{example}
\begin{example}[the figure-eight knot $4_1$]\label{ex:4_1} The fundamental group of the complement admits the following presentation
\[
G=\langle a_1,a_2\vert\ a_1w_a=w_aa_2\rangle,\quad w_a=a_2^{-1}a_1a_2a_1^{-1},
\]
the peripheral subgroup being given by
\[
H=\langle m=a_1,l=w_a\bar w_a\rangle\simeq\mathbb{Z}^2,\quad \bar w_a=a_1^{-1}a_2a_1a_2^{-1}.
\]
One can show that $H$ is a malnormal subgroup of $G$ (this is true for any hyperbolic knot), and that the corresponding  $A'$-ring is trivial. The latter fact will follow from our description of the set $W(G,H)$: it consists of two minimal weakly special representations $\rho_i$, $i\in\{1,2\}$, such that
\[
H\cap\mathrm{ker}(\rho_i)=\langle m^{p_i},lm^{q_i}\rangle,\quad (p_1,q_1)=(0,0),\quad (p_2,q_2)=(6,3).
\]

The ring $\hat R_{G,H}$ admits the following finite presentation: it is generated by four elements
$\{s_{a_i},v_{a_i}\vert\ i\in\{1,2\}\}$, with $s_{a_i}$ being invertible, subject to  four relations:
\[
s_{a_1}s_{w_a}=s_{w_a}s_{a_2},\quad
v_{a_1}=0,\quad
v_{\bar w_a}+v_{w_a}s_{\bar w_a}=0,\quad v_{w_a}(1-s_{a_2})=v_{a_2},
\]
where
\[
s_{w_a}=s_{a_2}^{-1}s_{a_1}s_{a_2}s_{a_1}^{-1},\quad  s_{\bar w_a}=s_{a_1}^{-1}s_{a_2}s_{a_1}s_{a_2}^{-1},
\]
\[
v_{w_a}=v_{a_2}(s_{a_1}^{-1}-s_{w_a}),\quad v_{\bar w_a}=-v_{a_2}(1-s_{a_1})s_{a_2}^{-1}.
\]
We also define a ring $S$ generated over $\mathbb{Z}$ by three elements $\{p,r,x\}$, and the following defining relations:
\begin{eqnarray}
x^2&=&x-p,\label{eq:280}\\
 px&=&x+3p+r^2-1,\label{eq:290}\\
 pr&=&r,\label{eq:300}\\
  rx+xr&=&r-r^2,\label{eq:310}\\
p^2&=&1-4p-2r^2.\label{eq:320}
\end{eqnarray}
We remark that this ring is noncommutative and finite dimensional over $\mathbb{Z}$ with $\mathrm{dim}_{\mathbb{Z}}S=6$. Indeed, it is straightforward to see that for a $\mathbb{Z}$-linear basis one can choose, for example, the set $\{1,p,r,x, rx, r^2\}$, the set $\{1,p,r^2\}$ being a $\mathbb{Z}$-basis of the center.
\begin{lemma}\label{lem:15}
In the ring $S$, let $(a)$ be the two sided ideal generated by the element
\[
a=2+2p+r+2x+3r^2+rx.
\]
Then, $\{2,r,1-p\}\subset (a)$.
\end{lemma}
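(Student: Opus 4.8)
The plan is to turn the statement into a finite integer linear-algebra computation inside the rank-$6$ free $\mathbb{Z}$-module $S$ with basis $\{1,p,r,x,rx,r^2\}$, and then to exhibit $2$, $r$ and $1-p$ as explicit $\mathbb{Z}$-combinations of two-sided multiples $b\,a\,c$ of $a$. Since $S$ is free of rank $6$, the two-sided ideal $(a)$ is the $\mathbb{Z}$-span of the finite set $\{e_i\,a\,e_j\}$ of products of basis elements, so membership of any given element is decidable by reduction (equivalently, by a Smith normal form computation).

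First I would complete the multiplication table on the basis. Since $\{1,p,r^2\}$ spans the center, $p$ and $r^2$ commute with everything; in particular $rp=pr=r$ and $xp=px=x+3p+r^2-1$ by \eqref{eq:290} and \eqref{eq:300}. The only products not immediately reduced by \eqref{eq:280}--\eqref{eq:320} involve $r^3$ and $r^2x$, so I would first derive the two closing identities
\[
r^3=-2r,\qquad r^2x=1-p-2x .
\]
The first follows from $p^2r=p(pr)=r$ together with \eqref{eq:300} and \eqref{eq:320}; the second from comparing $p^2x$ computed via \eqref{eq:320} with $p(px)$ computed via \eqref{eq:290}. With these in hand every product of two basis elements is a known $\mathbb{Z}$-combination of the six basis elements.

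Next I would mine the one-sided multiples of $a$ for cheap members of $(a)$. A direct computation gives $ra+ar=r^2$, hence $r^2\in(a)$; two-sidedness then yields $r\cdot r^2=r^3=-2r\in(a)$, so $2r\in(a)$. Reducing the combinations $ra$, $xa$, $ax$, $a\cdot rx$ modulo $r^2$ produces the further members $r-rx$, $2x$ and $2rx$, while reducing $rx\cdot a$ (subtracting $2r^2$, $2x$, $2rx$) gives exactly $-(1-p)$; thus $1-p\in(a)$. Subtracting $2r^2$ from $a-pa=2+2p+2r^2$ also gives $2+2p\in(a)$, and reducing $ax$ gives $2p-r\in(a)$.

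The hard part is that none of these one-sided reductions separates $r$ from $rx$, nor $1$ from $p$: the lattice spanned by the left and right multiples of $a$ keeps the $r$- and $rx$-coordinates of the same parity and the unit- and $p$-coordinates coupled, so $2r$, $2p-r$ and $2+2p$ are obtainable but $r$ and $2$ are not. The decoupling must come from a genuinely two-sided product, and this is the crux of the argument. Computing $x\,a\,x=-1+3p+r+2x+rx+2r^2$ and reducing it by the elements already found, namely adding $1-p$ and subtracting $2r^2$, $2x$, $r-rx$ and $2rx$, collapses it to $2p$; hence $2p\in(a)$. From this the proof closes immediately: $2=(2+2p)-2p\in(a)$ and $r=2p-(2p-r)\in(a)$, together with the previously established $1-p\in(a)$. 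Thus $\{2,r,1-p\}\subset(a)$, and the only delicate point is recognizing that it is the two-sided multiple $x\,a\,x$, rather than any one-sided multiple, that breaks the parity coupling.
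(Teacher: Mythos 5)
Your overall strategy (reduce everything to $\mathbb{Z}$-linear algebra in the rank-$6$ lattice and exhibit explicit combinations of products $u\,a\,v$) is viable, and your key computations check out: $ra+ar=r^2$, $r^3=-2r$, $r^2x=1-p-2x$, $a-pa=2+2p+2r^2$, $xax=-1+3p+r+2x+rx+2r^2$, and the final assembly of $2$ and $r$ from $2p$, $2+2p$, $2p-r$ are all correct. But there is a genuine gap in the middle of your chain: the claim that reducing $ra$, $xa$, $ax$, $a\cdot rx$ modulo $r^2$ produces $2x$ is false, and with it your derivation of $1-p$ from $rx\cdot a$ (which presupposes $2x\in(a)$) becomes circular. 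To see the failure concretely: as a $\mathbb{Z}$-module the two-sided ideal generated by $r^2$ is spanned by $r^2$, $2r$, $2rx$ and $r^2x=1-p-2x$; now consider the $\mathbb{Z}$-linear functional $\lambda$ reading off $c_p-c_1-c_x$ from coordinates in the basis $\{1,p,r,x,rx,r^2\}$. One checks that $\lambda$ vanishes on those four spanning elements and on $ra$, $xa$, $ax$, $a(rx)$, $(rx)a$ (for instance $xa=2p+2r+2x-rx+r^2$ gives $2-0-2=0$), whereas $\lambda(2x)=\lambda(1-p)=-2$. So neither $2x$ nor $1-p$ lies in the lattice you are reducing against, and everything downstream --- $2p-r$, the collapse of $xax$ to $2p$, and finally $2$ and $r$ --- is unsupported as written.

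The gap is local and repairable inside your own framework: the element missing from your reduction set is $a$ itself (note $\lambda(a)=-2$). In fact $a-xa-ax=1-p$ \emph{exactly}, and then $2x=(1-p)-r^2x\in(a)$; with these two substitutions every remaining reduction you list is correct and the proof closes as you describe. (Your heuristic description of the obstruction --- that one-sided multiples keep the $r$- and $rx$-coordinates of the same parity --- is also inaccurate, since $xa$ itself violates it; but your structural point that $2$ and $r$ cannot be reached from one-sided multiples together with the ideal $(r^2)$, so that a genuinely two-sided product such as $xax$ is needed, is correct.) For comparison, the paper's own proof is much shorter and purely multiplier-based: it exhibits $b=1-x-r^2-2rx$ and $c=-2+p+3r+4x-rx$ with $2+r=ba$ and $r=ca+(2+r)x$, whence $2\in(a)$ and then $1-p=(2+r^2)x\in(a)$; your route, once corrected, is a more systematic but longer variant of the same kind of computation.
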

\begin{proof}
First, one can verify that $2+r=ba$, where
\[
b=1-x-r^2-2rx,
\]
so that $2+r\in(a)$. Next, one has $r=ca+(2+r)x$, where
\[
c=-2+p+3r+4x-rx,
\]
so that $r\in (a)$, and thus $2\in (a)$. Finally, remarking that
\[
1-p=2x+r^2x,
\]
we conclude that $1-p\in (a)$.
\end{proof}
\begin{lemma}\label{lem:20}
Let $p,q,x$ be three elements of a ring $R$ satisfying the three identities
\begin{eqnarray}
\label{eq:400}p&=&x-x^2,\\
\label{eq:410}q&=&px-3p-x+1,\\
\label{eq:420}pq&=&q.
\end{eqnarray}
Then, the element $p$ is invertible if and only if
\begin{equation}\label{eq:430}
2q+p^2+4p-1=0.
\end{equation}
\end{lemma}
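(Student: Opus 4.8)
The plan is to exploit the fact that, by \eqref{eq:400} and \eqref{eq:410}, both $p$ and $q$ are polynomials in the single element $x$; consequently $p$, $q$, and $x$ pairwise commute and all manipulations take place inside the commutative subring of $R$ generated by $x$. The two facts that drive the argument are the factorization $p = x(1-x)$ and the identity
\[
(x-1)\bigl(2q + p^2 + 4p - 1\bigr) = (p-1)q,
\]
which one checks by substituting $p = x - x^2$ and $q = px - 3p - x + 1$: both sides reduce to the polynomial $x^5 - 5x^4 + 9x^3 - 9x^2 + 5x - 1$. Invoking \eqref{eq:420} in the form $(p-1)q = 0$, this identity collapses to $(x-1)\bigl(2q + p^2 + 4p - 1\bigr) = 0$, which is the hinge of the converse direction.

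For the implication \eqref{eq:430} $\Rightarrow$ $p$ invertible the hinge is not even needed. Assuming $2q + p^2 + 4p - 1 = 0$, I would rewrite it as $2q = 1 - 4p - p^2$ and multiply by $p-1$; since $(p-1)q = 0$ by \eqref{eq:420}, the left-hand side vanishes, leaving $(p-1)(1 - 4p - p^2) = 0$, that is $p^3 + 3p^2 - 5p + 1 = 0$. This reads $p\,(5 - 3p - p^2) = 1$, and because $5 - 3p - p^2$ is a polynomial in $p$ it commutes with $p$; hence $p$ is invertible with inverse $5 - 3p - p^2$.

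For the converse I would combine the hinge with the factorization $p = x(1-x)$. If $p$ is invertible, then, since $x$ and $1-x$ commute, each factor is invertible: from $x(1-x)p^{-1} = 1 = p^{-1}x(1-x)$ together with $x(1-x) = (1-x)x$ one reads off both a right and a left inverse for $x$, and symmetrically for $1-x$. In particular $x-1$ is invertible, so multiplying $(x-1)\bigl(2q + p^2 + 4p - 1\bigr) = 0$ by $(x-1)^{-1}$ yields exactly \eqref{eq:430}.

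The only place calling for genuine care is the verification of the displayed polynomial identity; everything else is formal. I expect the cleanest route is to record the three factorizations $1 - p = x^2 - x + 1$, $q = -(x-1)(x^2 - 3x + 1)$, and $2q + p^2 + 4p - 1 = (x^2 - x + 1)(x^2 - 3x + 1)$, after which the identity is immediate and both directions follow as above. Extracting these factorizations from \eqref{eq:400} and \eqref{eq:410} is the main computational obstacle, though it is entirely routine.
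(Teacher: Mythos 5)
Your proof is correct, and it takes a genuinely different route from the paper's. The paper eliminates $x$ entirely: it first derives $q^2=-2q$ (multiply \eqref{eq:410} by $q$ and use \eqref{eq:420}), then eliminates $x$ from \eqref{eq:400} and \eqref{eq:410} to obtain $q^2+(5p-1)q+p(p^2+4p-1)=0$, which collapses to the unconditional identity $2q+p(p^2+4p-1)=0$; both directions of the lemma then follow by comparing this identity with \eqref{eq:430} --- multiplying by $p^{-1}$ and using $p^{-1}q=q$ in one direction, subtracting to get $(p-1)(p^2+4p-1)=0$ in the other. You keep $x$ in play instead, and your hinge identity $(x-1)\bigl(2q+p^2+4p-1\bigr)=(p-1)q$ is correct, as are the three factorizations in $\mathbb{Z}[x]$ that establish it. This redistributes the logical load between the two directions. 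Your proof that \eqref{eq:430} implies invertibility is more economical than the paper's: multiplying \eqref{eq:430} by $p-1$ and invoking $(p-1)q=0$ yields $(p-1)(p^2+4p-1)=0$ directly, with no need for $q^2=-2q$ or any elimination, and both arguments then terminate at the same cubic $p^3+3p^2-5p+1=0$ and the same explicit inverse $p^{-1}=5-3p-p^2$. Your proof of the converse rests on an idea absent from the paper: invertibility of $p=x(1-x)$ forces invertibility of each of the commuting factors $x$ and $1-x$ (left and right inverses exist, hence coincide), so $x-1$ is invertible and can be cancelled in the hinge identity against \eqref{eq:420}. What the paper's elimination buys is a single relation involving only $p$ and $q$, from which everything follows by manipulations in those two elements alone; what your route buys is a shorter derivation in one direction and a transparent structural reason (invertibility of $x-1$) in the other.
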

\begin{proof}
First, multiplying by $q$ identity~\eqref{eq:410} and simplifying the right hand side by the use of equation~\eqref{eq:420}, we see that
\begin{equation}\label{eq:435}
q^2=-2q.
\end{equation}
Next,
excluding $x$ from identities~\eqref{eq:400} and ~\eqref{eq:410}, we obtain the identity
\[
q^2+(5p-1)q+p(p^2+4p-1)=0
\]
which, due to equations~\eqref{eq:420} and \eqref{eq:435}, simplifies to
\begin{equation}\label{eq:440}
2q+p(p^2+4p-1)=0.
\end{equation}
Now, if $p$ is invertible, then equations~$\eqref{eq:440}$, $\eqref{eq:420}$  imply equation~$\eqref{eq:430}$. Conversely, if $\eqref{eq:430}$ is true, then combining it with $\eqref{eq:440}$, we obtain the polynomial identity
\[
(p-1)(p^2+4p-1)=0
\]
which implies invertibility of $p$ with the inverse
\(
p^{-1}=5-3p-p^2.
\)
\end{proof}
\begin{proposition}
(i) There exists a unique ring homomorphism $\phi\colon \hat R_{G,H}\to S$ such that
\[
s_{a_1}\mapsto x,\quad s_{a_2}\mapsto x+r,\quad v_{a_1}\mapsto 0,\quad v_{a_2}\mapsto 1.
\]

(ii) For any weakly special representation $\rho$, considered up to equivalence, the ring homomorphism $f_\rho$ factorizes through $\phi$, i.e. there exists a unique ring homomorphism $h_\rho\colon S\to R$ such that
$f_\rho=h_\rho\circ\phi$.

(iii) The set $W(G,H)$ consists of two elements.
\end{proposition}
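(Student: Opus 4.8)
The plan is to prove the three parts in order, leaning on the explicit finite presentations of $\hat R_{G,H}$ and $S$, on Proposition~\ref{pro:100}, and on Lemmas~\ref{lem:15} and~\ref{lem:20}. For (i), I would first record that $p=x-x^2$ is invertible in $S$. This is exactly Lemma~\ref{lem:20} read inside $S$: relation~\eqref{eq:290} identifies the auxiliary element $q=px-3p-x+1$ with $r^2$, relation~\eqref{eq:300} gives $pq=q$, and relation~\eqref{eq:320} is precisely~\eqref{eq:430}, so $p^{-1}=5-3p-p^2$. Consequently $x^{-1}=(1-x)p^{-1}$, and one checks in the same way that $x+r$ is invertible. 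With these inverses available I would compute $\phi(s_{w_a})$, $\phi(s_{\bar w_a})$, $\phi(v_{w_a})$, $\phi(v_{\bar w_a})$ as $\mathbb Z$-linear combinations of the basis $\{1,p,r,x,rx,r^2\}$ and verify that each of the four defining relations of $\hat R_{G,H}$ maps to an identity of $S$; this is a finite computation reducing everything through~\eqref{eq:280}--\eqref{eq:320}. Uniqueness of $\phi$ is automatic, its values being prescribed on generators.

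For (ii), let $\rho$ be weakly special. Since $a_1=m\in H$ we have $\beta(a_1)=0$; and $\beta(a_2)\neq 0$, for otherwise the cocycle rule $\beta(gh)=\beta(g)\alpha(h)+\beta(h)$ and the fact that $a_1,a_2$ generate $G$ would give $\beta\equiv0$, against weak speciality. As $\beta(a_2)\in R^*$, composing $\rho$ with the automorphism $(u,w)\mapsto(u,\beta(a_2)^{-1}w)$ of $R^*\ltimes R$ yields an equivalent representation with $\beta(a_2)=1$, i.e. $f_\rho(v_{a_2})=1$. I would then verify that $\alpha(a_1)$, $\alpha(a_2)-\alpha(a_1)$ and $\alpha(a_1)-\alpha(a_1)^2$ satisfy~\eqref{eq:280}--\eqref{eq:320} in the roles of $x,r,p$: \eqref{eq:280} is definitional, \eqref{eq:290}--\eqref{eq:310} come from the presentation of $\hat R_{G,H}$ after the normalisation, and \eqref{eq:320} follows from Lemma~\ref{lem:20}, since in a weakly special representation the image of $p$ must be a unit. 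This produces $h_\rho\colon S\to R$ with $f_\rho=h_\rho\circ\phi$; uniqueness of $h_\rho$ follows because $\phi$ is surjective ($x$, $r=(x+r)-x$ and $p=x-x^2$ all lie in its image).

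For (iii), note first that $f_\rho(\hat R_{G,H})$ is generated over $\mathbb Z$ by $\alpha(G)$, so Proposition~\ref{pro:100} applies to every weakly special $\rho$; by (ii) these representations correspond to ring homomorphisms out of $S$, and minimal ones to minimal two-sided ideals $I\subset S$ with $S/I$ weakly special. The central computation is the structure of $S$: the centre satisfies $(p-1)(p^2+4p-1)=0$, and I would analyse the two branches separately. On the branch $p^2+4p-1=0$ the relations force $r=0$ and $x^2-3x+1=0$, giving the field $\mathbb Q[t]/(\Delta_{4_1}(t))\cong\mathbb Q(\sqrt5)$, where $\Delta_{4_1}(t)=t^2-3t+1$ is the Alexander polynomial; on the branch $p=1$ they give $x^2-x+1=0$, $r^2=-2$, a four-dimensional algebra which is a skew-field by Proposition~\ref{pro:100}(i). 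Thus $S\otimes\mathbb Q$ is the product of these two simple algebras, and the decisive observation is that a homomorphism $S\to R$ can be weakly special only if its image lies in a single simple component: a genuinely diagonal quotient would produce a $\beta$-value of mixed type $(\text{unit},0)$, which is neither a unit nor zero. Hence there are exactly two minimal weakly special representations.

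The main obstacle is the completeness part of (iii): showing that these two components exhaust the minimal weakly special quotients, which requires controlling the full two-sided ideal structure of the noncommutative ring $S$ and, in particular, ruling out positive-characteristic degenerations. This is where Lemma~\ref{lem:15} enters: the finite quotient $S/(a)\cong\mathbb F_4$, although a field (hence formally weakly special), has kernel $(a)\supseteq\{2,r,1-p\}$ that strictly contains the kernel of the $p=1$ representation (it contains $r$ and $2$, which that kernel does not), so it is dominated by it and is not minimal. Finally, to match the two survivors with the stated surgery data $(p_1,q_1)=(0,0)$ and $(p_2,q_2)=(6,3)$, I would compute $H\cap\ker(\rho_i)$ in each component, i.e. the order of $\alpha(m)$ and the image of the longitude $l$: on the $\mathbb Q(\sqrt5)$ branch $\alpha(m)$ has infinite order and $l$ dies, while on the $\mathbb Q(\sqrt{-3})$ branch $\alpha(m)$ is a primitive sixth root of unity, forcing $m^6\in\ker\rho$.
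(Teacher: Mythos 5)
Your parts (i) and (ii) track the paper's proof: the use of Lemma~\ref{lem:20} (with $q=r^2$) to invert $p$, hence $x$ and $x+r$, is a clean way of making the paper's ``straightforward verification'' of (i) explicit, and your sketch of (ii) --- normalize $\beta(a_2)=1$, show that $x_1$, $x_{21}=x_2-x_1$, $z=x_1-x_1^2$ satisfy the defining relations of $S$, with \eqref{eq:320} supplied by Lemma~\ref{lem:20} --- is the paper's argument in compressed form (though ``\eqref{eq:290}--\eqref{eq:310} come from the presentation'' conceals the real work, namely the anticommutation relations \eqref{eq:120}, whose derivation needs the invertibility of $\beta(w_a)$ and $\beta(\bar w_a)$). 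The first genuine gap is in (iii): your whole classification presupposes that every weakly special quotient of $S$ is a domain, equivalently that $h_\rho(S)=f_\rho(\hat R_{G,H})$ is a skew-field. That is Proposition~\ref{pro:100}(i), but you verify only its generation hypothesis; part (i) of that proposition also requires $\beta\circ\alpha^{-1}(1)\neq\{0\}$, and checking this for the figure-eight pair is exactly where the paper needs Lemma~\ref{lem:15}: when $x_{21}\neq0$ one takes $t_0=w_a\bar w_a^{-1}$, whose $\beta$-value is the image of the element $a=2+2p+r+2x+3r^2+rx$ of Lemma~\ref{lem:15}, and the lemma forces that image to be nonzero (the two-sided ideal it generates contains the image of $r$, which is $x_{21}\neq0$). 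You instead assign Lemma~\ref{lem:15} the different job of discarding the quotient $S/(a)\cong\mathbb{F}_4$, so the hypothesis --- and with it the domain property on which the entire ideal analysis rests --- is never established.

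The second gap is completeness. Your route --- decompose $S\otimes\mathbb{Q}\cong\mathbb{Q}[t]/(\Delta_{4_1}(t))\times(\text{rational quaternions})$, assert that a diagonal quotient would produce a $\beta$-value of mixed type, and dispose of positive characteristic with the single example $\mathbb{F}_4$ --- does not prove that \emph{every} weakly special kernel contains $(r)$ or $(1-p,2+r^2)$. The mixed-type assertion needs an explicit witness, i.e.\ a group element $g$ with $\beta(g)$ vanishing in one factor and invertible in the other (such elements exist, e.g.\ suitable conjugates of the longitude, but exhibiting one is a genuine step), and quotients of torsion or mixed characteristic other than $\mathbb{F}_4$ are not addressed at all. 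The step missing from your proposal, which makes the paper's proof both complete and uniform in all characteristics, is the pair of zero-divisor identities $r(1-p)=0$ and $r(2+r^2)=0$ in $S$: once weakly special quotients are known to be domains, any such kernel must contain either $r$ or both $1-p$ and $2+r^2$, hence one of the two incomparable ideals $(r)$ and $(1-p,2+r^2)$; both are realized, via $\mathbb{Z}[t]/(\Delta_{4_1}(t))\subset\mathbb{Q}[t]/(\Delta_{4_1}(t))$ and via the Hurwitz quaternions inside the rational quaternions, so $W(G,H)$ has exactly two elements. Note finally that your appeal to Proposition~\ref{pro:100}(i) to prove that the quaternion branch is a skew-field is circular: to have a weakly special representation with that image one must already have a target ring in which its nonzero $\beta$-values (for instance $2$ times a unit) are invertible, which is precisely the standard fact that the rational quaternions form a division ring.
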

\begin{proof}
(i) This is a straightforward verification.

(ii)
Let $R$ be a non-trivial ring and
\[
\rho\colon G\ni g\mapsto (\alpha(g),\beta(g))\in R^*\ltimes R,
\] a weakly special representation of $G$. Then, the element $\xi=\beta(a_2)$ is invertible, since  otherwise $\beta^{-1}(0)=G$. By replacing $\rho$ with an equivalent representation, we can assume that $\xi=1$.

Denote
\[
x_i=\alpha(a_i),\quad w_x=\alpha(w_a),\quad \bar w_x=\alpha(\bar w_a).
\]
Note that
\[
0=\beta(a_2a_2^{-1})=\beta(a_2^{-1})+x_2^{-1}\Leftrightarrow \beta(a_2^{-1})=-x_2^{-1}.
\]
From the definitions of $w_a$ and $\bar w_a$ we have
\begin{equation}\label{eq:100}
\beta(w_a)=\beta(a_2^{-1}a_1a_2a_1^{-1})
=\beta(a_1a_2a_1^{-1})+\beta(a_2^{-1})x_1x_2x_1^{-1}
=x_1^{-1}-w_x,
\end{equation}
\begin{equation}\label{eq:110}
\beta(\bar w_a)=\beta(a_1^{-1}a_2a_1a_2^{-1})=-x_2^{-1}+x_1x_2^{-1}=-(1-x_1)x_2^{-1}.
\end{equation}
From the relation $a_1w_a=w_aa_2$ we obtain an identity
\[
\beta(w_a)=\beta(a_1w_a)=\beta(w_aa_2)=\beta(w_a)x_2+1
\]
which implies that $\beta(w_a)\ne 0$, and thus $\beta(w_a)$ is invertible with
\[
\beta(w_a)^{-1}=1-x_2.
\] Invertibility
of $\beta(\bar w_a)$ follows from the equation
\[
0=\beta(w_a\bar w_a)=\beta(w_a)\bar w_x+\beta(\bar w_a).
\]
Compatibility of the latter equations with the formulae~\eqref{eq:100}, \eqref{eq:110} implies the following anticommutation relations
\begin{equation}\label{eq:120}
x_i^{-1}x_j+x_jx_i^{-1}=x_i^{-1}+x_j-1,\quad \{ i,j\}=\{1,2\}.
\end{equation}
Indeed,
\begin{multline*}
x_1x_2^{-1}=(1-x_2)\beta(w_a)x_1x_2^{-1}=(1-x_2)(x_1^{-1}-w_x)x_1x_2^{-1}\\
=(1-x_2)(x_1^{-1}-x_2^{-1}x_1x_2x_1^{-1})x_1x_2^{-1}
=(1-x_2)(x_2^{-1}-x_2^{-1}x_1)\\
=(x_2^{-1}-1)(1-x_1)=x_1+x_2^{-1}-1-x_2^{-1}x_1,
\end{multline*}
and
\begin{multline*}
x_1^{-1}x_2=x_1^{-1}x_2x_1x_2^{-1}x_2x_1^{-1}=\bar w_xx_2x_1^{-1}
=-\beta(w_a)^{-1}\beta(\bar w_a)x_2x_1^{-1}\\
=(1-x_2)(1-x_1)x_2^{-1}x_2x_1^{-1}=(1-x_2)(x_1^{-1}-1)=x_2+x_1^{-1}-1-x_2x_1^{-1}.
\end{multline*}
By using relations~\eqref{eq:120}, one obtains the following formula:
\[
w_x^{-1}=(x_2^{-1}-1)(1-x_1-x_2)x_1^{-1}.
\]
Indeed,
\begin{multline*}
w_x^{-1}=x_1x_2^{-1}x_1^{-1}x_2=(x_1+x_2^{-1}-1-x_2^{-1}x_1)x_1^{-1}x_2
=(x_2^{-1}-1)(1-x_1)x_1^{-1}x_2\\
=(x_2^{-1}-1)(x_1^{-1}x_2-x_2)=(x_2^{-1}-1)(x_1^{-1}-1-x_2x_1^{-1})=(x_2^{-1}-1)(1-x_1-x_2)x_1^{-1}.
\end{multline*}
This formula implies the following equivalences:
\[
x_2w_x^{-1}=w_x^{-1}x_1\ \Leftrightarrow\ \\ x_2(1-x_1-x_2)=(1-x_1-x_2)x_1\ \Leftrightarrow\ x_2(1-x_2)=(1-x_1)x_1.
\]
The latter identity can be equivalently rewritten in the form
\begin{equation}\label{eq:130}
x_1x_{21}+x_{21}x_1=x_{21}-x_{21}^2,
\end{equation}
where
\[
x_{21}=x_2-x_1,
\]
and the same identity implies that there exists an invertible element $z$ such that
\[
x_i(1-x_i)=z,\quad i\in\{1,2\}.
\]
Evidently, $z$ commutes with both $x_1$ and $x_2$. We have the following formulae for the inverses of $x_i$:
\[
x_i^{-1}=(1-x_i)z^{-1},
\]
substitute them into equations~\eqref{eq:120}, and rewrite the result as the following two equations
\begin{equation}\label{eq:140}
x_1x_{21}+x_{21}x_1=x_{21}+3z+x_1-zx_1-1,
\end{equation}
and
\begin{equation}\label{eq:150}
(z-1)x_{21}=0.
\end{equation}
Compatibility of equations~\eqref{eq:130} and \eqref{eq:140} gives one more identity
\begin{equation}\label{eq:160}
zx_1+1=3z+x_1+x_{21}^2.
\end{equation}
Finally, applying lemma~\ref{lem:20} to elements $p,x_{21}^2,x_1$, we obtain one more identity
\[
2x_{21}^2+p^2+4p-1=0.
\]
Now, we can easily see that the mapping
\[
h_\rho(p)=z,\quad h_\rho(r)=x_{21},\quad h_\rho(x)=x_1,
\]
in a unique way extends to a ring homomorphism $h_\rho\colon S\to R$, and one has the factorization formula $f_\rho=h_\rho\circ\phi$.

(iii)
We remark that our pair $(G,H)$ and any weakly special representation $\rho$ verify the conditions of proposition~\ref{pro:100}. For example, let us check that there exists an element $t_0\in \alpha^{-1}(1)$ such that $\beta(t_0)$ is invertible. In the case if $x_{21}=0$, we can choose $t_0=a_1^{-1}a_2$ for which $\beta(t_0)=1$, while for $x_{21}\ne 0$ we can choose $t_0=w_a\bar w_a^{-1}$ for which
\[
\beta(t_0)=2+2z+x_{21}+2x_1+3x_{21}^2+x_{21}x_1.
\]
Due to lemma~\ref{lem:15}, the latter element is non-zero and therefore invertible.
Thus, taking into account the parts~(i) and (ii), to identify the elements of the set $W(G,H)$, it is enough to find all minimal quotients of the ring $S$ which can be embedded into rings the way that all non-zero elements become invertible. In particular, the quotient rings must be integral domains.

We have two relations in $S$ which indicate existence of zero-divisors:
\[
r(1-p)=0,\quad r(2+r^2)=0.
\]
We have the following mutually excluding possibilities for minimal quotients which remove these relations:
\begin{enumerate}
\item $r=0$;
\item $p=1$, $r^2=-2\ne0$.
\end{enumerate}

 The case (1) gives the quotient ring
 \[
 S/(r)\simeq \mathbb{Z}[t]/(\Delta_{4_1}(t)), \quad \Delta_{4_1}(t)=t^2-3t+1,\quad x\mapsto t,\  p\mapsto 1-2t.
 \] This is a commutative integral domain, and its localization at the set of non-zero elements coincides with the field of fractions $\mathbb{Q}[t]/(\Delta_{4_1}(t))$.

 The case (2) gives the quotient ring
 \[
 S/(1-p, 2+r^2)\simeq \mathbb{Z}\langle r,x\rangle/(1-x+x^2,rx+xr-2-r),
 \]
 which is isomorphic to the ring of Hurwitz integral quaternions
 \[
 H=\left\{a+bi+cj+dk\vert\ (a,b,c,d)\in\mathbb{Z}^4\sqcup(2^{-1}+\mathbb{Z})^4\right\},
 \]
 where
 \[
 1+i^2=1+j^2=ij+ji=0,\quad k=ij,
 \]
the isomorphism being given by the map
\[
r\mapsto i+j,\quad x\mapsto 2^{-1}(1-i-j+k),
\]
and the inverse map
\[
i\mapsto rx-1,\quad j\mapsto xr-1.
\]
 Thus,  $S/(1-p, 2+r^2)$ admits an embedding into a (non-commutative) division ring of rational quaternions.
 \end{proof}
Let $\rho_i$, $i\in\{1,2\}$ represent the two elements of $W(G,H)$.
Corresponding to $\rho_1$ the group pair $(G_1,H_1)$ admits a presentation
 \[
 G_1=\langle s,t_0,t_1\vert\  t_0t_1=t_1t_0,\ st_1=t_0s,\ t_1st_0=t_0^3s\rangle\simeq \mathbb{Z}\ltimes\mathbb{Z}^2,\quad H_1=\langle s\rangle\simeq\mathbb{Z}
 \]
 with the projection homomorphism
\[
\pi_1\colon (G,H)\to (G_1,H_1),\quad  a_1\mapsto s,\ a_2\mapsto st_0,
\]
whose kernel
\(
\mathrm{ker}(\pi_1)
\)
can be shown to be generated by the longitude $l=w_a\bar w_a$. In other words, we have a group isomorphism
\[
G_1\simeq \langle a_1,a_2\vert\ a_1w_a=w_aa_2,\ w_a\bar w_a=1\rangle,\quad w_a=a_2^{-1}a_1a_2a_1^{-1},\ \bar w_a=a_1^{-1}a_2a_1a_2^{-1}.
\]

To describe the group pair $(G_2,H_2)$, corresponding to $\rho_2$, consider the following representation of $G$ in $SL(4,\mathbb{Z})$:
 \[
 a_1\mapsto \begin{pmatrix}
 0 &1&0 &0\\
 -1&1&0 &0\\
 0 &0&0 &1\\
 0 &0&-1&1
 \end{pmatrix},\quad
 a_2\mapsto \begin{pmatrix}
 0 &0&1 & 0\\
 0 &1&1 &-1\\
 -1&0&1 & 0\\
 -1&1&1 & 0
 \end{pmatrix},
 \]
whose kernel is generated, for example, by the element $a_1^{-1}a_2a_1^2a _2$, and
the corresponding right action of $G$ on $\mathbb{Z}^4$ (given by the multiplication of integer row vectors by above matrices). Then, the group $G_2$ is the quotient group of the semidirect product
$G\ltimes\mathbb{Z}^4$ by the relation $a_1 (1,0,0,0)=a_2a_1^2a_2$, where we identify $G$ and $\mathbb{Z}^4$ as subgroups of $G\ltimes\mathbb{Z}^4$, while the subgroup $H_2=\langle a_1\rangle\simeq\mathbb{Z}_6$.

\end{example}
\section{Presentations of $\Delta$-groupoids}\label{sec:pres}
\subsection{The tetrahedral category}

For any non-negative integer $n\ge0$ we identify the symmetric group
$\mathbb{S}_n$ as the sub-group of all permutations (bijections) of the set of non-negative integers $\mathbb{Z}_{\ge0}$  acting identically on the subset $\mathbb{Z}_{\ge n}\subset \mathbb{Z}_{\ge0}$. This interpretation fixes  a canonical inclusion $\mathbb{S}_m\subset\mathbb{S}_{n}$ for any pair $m\le n$. The standard generating set $\{s_i=(i-1,i)\vert\ 1\le i< n\}$ of $\mathbb{S}_n$ is given by elementary transpositions of two consecutive integers. Later on, it will be convenient to use the inductive limit
\[
\mathbb{S}_\infty=\lim_{\longrightarrow}\mathbb{S}_n=\cup_{n\ge0}\mathbb{S}_n.
\]
We also denote by $\myGset{n}$ the category of $\mathbb{S}_n$-sets, i.e. sets with a left $\mathbb{S}_n$-action and $\mathbb{S}_n$-equivariant maps as morphisms.

We remark that in any
$\Delta$-groupoid $G$, its distinguished generating set $H$ is an $\mathbb{S}_3$-set given by the identifications $s_1=j$ and $s_2=i$, while the set $V\subset H^{2}$ of $H$-composable pairs is an $\mathbb{S}_4$-set given by the rules
\begin{equation}\label{eq:50}
s_1(x,y)=(j(x),j(k(x)j(y))),\quad s_2(x,y)=(i(x),xy),\quad
s_3(x,y)=(xy,i(y)),
\end{equation}
and the projection map to the first component
\[
\mathrm{pr}_1\colon V\ni (x,y)\mapsto x\in H
\]
being $\mathbb{S}_3$-equivariant.

Let $R_{43}\colon\myGset{4}\to \myGset{3}$ be the restriction (to the subgroup) functor. Consider the comma category\footnote{see \cite{ML} for a general definition of a comma category.} $(R_{43}\downarrow\myGset{3})$ of $\mathbb{S}_3$-equivariant maps $a\colon R_{43}(V_a)\to I_a$, for some $\mathbb{S}_4$-set  $V_a$, and $\mathbb{S}_3$-set $I_a$. Call it the \emph{tetrahedral} category. An object of this category will be called \emph{tetrahedral object}. A morphism between two  tetrahedral objects $f\colon a\to b$, called  \emph{tetrahedral morphism}, is a pair $(p_f,q_f)$ where $p_f\in \myGset{4}(V_a,V_b)$ and  $q_f\in \myGset{3}(I_a,I_b)$ are such that $bR_{43}(p_f)=q_fa$. Taking into account the remarks above, to each $\Delta$-groupoid $G$ we can associate a tetrahedral object $C(G)=\mathrm{pr}_1\colon R_{43}(V)\to H$.
If $f\colon G \to G'$ is a morphism of $\Delta$-groupoids, then the pair
\[
C(f)=(f\times f\vert_{V},f\vert_{H})
\]
 is a tetrahedral morphism such that $C(fg)=C(f)C(g)$. Thus, we obtain a functor
$C\colon \myK\to (R_{43}\downarrow\myGset{3})$.

 A $\Delta$-groupoid $G$ is called \emph{finite} if its distinguished set $H$ is finite (note that $G$ itself can be an infinite groupoid). Let $\myK_{\mathrm{fin}}$ be the full subcategory of finite $\Delta$-groupoids, and
$(R_{43}\downarrow\myGset{3})_{\mathrm{fin}}$ the full subcategory of \emph{finite} tetrahedral objects (i.e. $a$'s with finite $V_a$ and $I_a$). Then, the functor $C$ restricts to a functor
\[
C_{\mathrm{fin}}\colon \myK_{\mathrm{fin}}\to (R_{43}\downarrow\myGset{3})_{\mathrm{fin}}.
\]
\begin{proposition}
The functor $C_{\mathrm{fin}}$ admits a left adjoint
\[
C_{\mathrm{fin}}'\colon (R_{43}\downarrow\myGset{3})_{\mathrm{fin}}\to\myK_{\mathrm{fin}}
\]
which verifies the identities
 \[
 C_{\mathrm{fin}}' C_{\mathrm{fin}}C_{\mathrm{fin}}'=C_{\mathrm{fin}}',\quad
  C_{\mathrm{fin}} C_{\mathrm{fin}}'C_{\mathrm{fin}}=C_{\mathrm{fin}}.
 \]
\end{proposition}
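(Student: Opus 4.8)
The plan is to construct $C'_{\mathrm{fin}}$ as a \emph{presentation} functor, dual to the way a group is presented by generators and relations, and then to read off the two identities from explicit descriptions of the unit and counit.

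First I would construct $C'_{\mathrm{fin}}(a)$ for a finite tetrahedral object $a\colon R_{43}(V_a)\to I_a$. The $\mathbb{S}_3$-set $I_a$ supplies the would-be generating set $H=I_a$ together with the involutions $i=s_2$ and $j=s_1$ (hence $k=iji$), which already satisfy $i^2=j^2=1$ and $iji=jij$ by the braid relation in $\mathbb{S}_3$. Using the canonical involution on would-be units as in Remark~\ref{rm:inv}, I assign a source and target to each generator and form the free groupoid on the resulting graph. I then impose the relations read off from $V_a$: for each $v\in V_a$ the formulas \eqref{eq:50} recover a pair $(x,y)$ together with its intended product $xy$, and I declare $(x,y)$ to be $H$-composable in the quotient with that product. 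The $\mathbb{S}_4$-equivariance of $V_a$, together with the compatibility $a=\mathrm{pr}_1$, guarantees that these relations are precisely the instances of \eqref{myeq:1} and of axioms (iii)--(iv), so that the quotient $C'_{\mathrm{fin}}(a)$ is a $\Delta$-groupoid; it is finite because $H=I_a$ is finite, even though the underlying groupoid may be infinite. On morphisms $C'_{\mathrm{fin}}$ is the induced map of generators.

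Next I would verify the adjunction by exhibiting a bijection $\myK_{\mathrm{fin}}(C'_{\mathrm{fin}}(a),G)\cong(R_{43}\downarrow\myGset{3})_{\mathrm{fin}}(a,C_{\mathrm{fin}}(G))$, natural in $a$ and $G$. By the universal property of the presentation, a $\Delta$-groupoid morphism out of $C'_{\mathrm{fin}}(a)$ is the same as an assignment of the generators $I_a$ to elements of $H_G$ respecting the $\mathbb{S}_3$-action and carrying each declared $H$-composable pair to an $H$-composable pair of $G$ with matching product; this is exactly a tetrahedral morphism $a\to C_{\mathrm{fin}}(G)$, that is, an $\mathbb{S}_3$-map $I_a\to H_G$ and an $\mathbb{S}_4$-map $V_a\to V_G$ commuting with the projections. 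Naturality is routine, and the bijection determines the unit $\eta$ and counit $\epsilon$ of the adjunction $C'_{\mathrm{fin}}\dashv C_{\mathrm{fin}}$.

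Finally I would deduce the two identities from explicit descriptions of $\eta$ and $\epsilon$. The counit $\epsilon_G\colon C'_{\mathrm{fin}}C_{\mathrm{fin}}(G)\to G$ sends each generator to the element of $H_G$ that it names, and the key point is that presenting $G$ by its own full data $C_{\mathrm{fin}}(G)$ and re-extracting returns $C_{\mathrm{fin}}(G)$ on the nose: distinct generators stay distinct and the $H$-composable pairs are exactly those recorded in $V_G$. Under this identification $C_{\mathrm{fin}}(\epsilon_G)=\mathrm{id}_{C_{\mathrm{fin}}(G)}$, and by naturality this gives $C_{\mathrm{fin}}C'_{\mathrm{fin}}C_{\mathrm{fin}}=C_{\mathrm{fin}}$ (consistently, the triangle identity $C_{\mathrm{fin}}\epsilon\circ\eta C_{\mathrm{fin}}=\mathrm{id}$ then forces $\eta C_{\mathrm{fin}}=\mathrm{id}$). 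Dually, the unit $\eta_a\colon a\to C_{\mathrm{fin}}C'_{\mathrm{fin}}(a)$ is the tautological map onto the generators and declared pairs of the presentation; since $C'_{\mathrm{fin}}(a)$ is already presented by the whole of its data $C_{\mathrm{fin}}C'_{\mathrm{fin}}(a)$, applying $C'_{\mathrm{fin}}$ to $\eta_a$ changes nothing, so $C'_{\mathrm{fin}}(\eta_a)=\mathrm{id}$ and hence $C'_{\mathrm{fin}}C_{\mathrm{fin}}C'_{\mathrm{fin}}=C'_{\mathrm{fin}}$. The main obstacle will be the boxed claim of this paragraph, namely that the presentation introduces no spurious identifications among generators and creates no $H$-composable pairs beyond those prescribed; this is a confluence / normal-form problem for the relations \eqref{eq:50} and \eqref{myeq:1}, and it is exactly here that the combinatorics of the truncated tetrahedron --- encoded in the $\mathbb{S}_4$-action on $V_a$ and its $\mathbb{S}_3$-equivariant projection --- must be used to show that the imposed relations are already closed, so that the extracted data is literally the input data.
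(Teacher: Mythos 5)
Your overall strategy --- build $C'_{\mathrm{fin}}(a)$ as a presented $\Delta$-groupoid, obtain the adjunction from the universal property of the presentation, and read the two identities off the unit and counit --- is the same as the paper's, but your construction of $C'_{\mathrm{fin}}(a)$ has a genuine gap: you take the distinguished generating set to be $I_a$ itself and impose the relations read off from $V_a$ on a free groupoid whose underlying graph you never actually construct. For a general finite tetrahedral object this cannot work. Nothing forces the map $\tau_a\colon V_a\to I_a^2$, $v\mapsto\bigl(a(v),a((321)(v))\bigr)$, to be injective: two distinct elements of $V_a$ may prescribe products $xy=z$ and $xy'=z$ with $y\ne y'$ in $I_a$, and then the groupoid relations force $y=y'$, so the distinguished subset of \emph{any} $\Delta$-groupoid receiving $a$ is a proper quotient of $I_a$, not $I_a$; your claim that the quotient is a $\Delta$-groupoid with $H=I_a$ is false in general. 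Worse, such forced identifications occur between generators that need not be parallel arrows in any naive graph, so they collapse objects as well, i.e.\ they change the graph itself, and each round of identifications can create new coincidences of $\tau$-images and new failures of associativity of the partial product. This is precisely why the paper pre-processes $a$ by the iterated equivariant quotients $a\to a_1\to a_2\to\cdots$ (until $\tau$ becomes injective), interleaved with quotients enforcing associativity, down to a stable object $\dot a$, and only then defines the node set $N_{\dot a}=I_{\dot a}/\mathcal{T}$, the graph $\Gamma_a$, and the presentation with generating set $I_{\dot a}$. The finiteness hypothesis, which your argument never invokes, is exactly what guarantees that this iteration terminates; your remark that $C'_{\mathrm{fin}}(a)$ is finite ``because $H=I_a$ is finite'' is a symptom of the missing step. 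Note also that your appeal to Remark~\ref{rm:inv} to assign sources and targets is circular: that remark presupposes an already constructed $\Delta$-groupoid.

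On the positive side, you correctly identify in your last paragraph the confluence-type difficulty (no spurious identifications of generators, no unintended $H$-composable pairs), and your derivation of $C_{\mathrm{fin}}C'_{\mathrm{fin}}C_{\mathrm{fin}}=C_{\mathrm{fin}}$ from the counit is essentially the right argument once $C'_{\mathrm{fin}}$ exists: since $\epsilon_G$ preserves composability and products, every $H$-composable pair of $C'_{\mathrm{fin}}C_{\mathrm{fin}}(G)$ maps to one of $G$, and since $C_{\mathrm{fin}}$ records only the data $(V,H,\mathrm{pr}_1)$ with its $\mathbb{S}_4$- and $\mathbb{S}_3$-actions, one gets equality rather than mere isomorphism. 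But you defer the difficulty to the very end and present it as an obstacle only to the two identities, whereas it already blocks the first step of your plan: without the stabilization procedure the object you call $C'_{\mathrm{fin}}(a)$ is not well defined (its graph does not exist, and axioms (iii)--(iv) need not hold for the quotient you describe), so the adjunction bijection cannot even be formulated. What is missing from your write-up is not a routine verification but the core of the paper's proof.
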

\begin{proof}
 Let  $a$ be an arbitrary finite tetrahedral object.
Consider a map
\[
\tau_a\colon V_a\to I_a^{2},\quad
\tau_a(v)=
(a(v),a((321)(v))),\quad \forall v\in V_a
\]
Let $\mathcal{R}$ be the minimal $\mathbb{S}_4$-equivariant equivalence relation on $V_a$ generated by the set
\[
\bigcup_{x\in I_a^{2}}\tau_a^{-1}(x)^{2}
\] and $\mathcal{R}^a$, the $a$-image of $\mathcal{R}$ which is necessarily
an $\mathbb{S}_3$-equivariant equivalence relation on $I_a$. Let  $p_a\colon V_a\to V_a/\mathcal{R}$ and $q_a\colon I_a\to I_a/\mathcal{R}^a$ be the canonical equivariant projections on the quotient sets. Then, there exists a unique tetrahedral object $a_1$ such that the pair $(p_a,q_a)$ is a tetrahedral morphism from $a$ to $a_1$. Iterating this procedure, we obtain a sequence of tetrahedral morphisms
\[
a\to a_1\to a_2\to\cdots
\]
which, due to finiteness of $a$, stabilizes  in a finite number of steps to a tetrahedral object $\tilde{a}$.
It is characterized  by the property that the map $\tau_{\tilde{a}}$ is an injection, so that we can identify
the set $V_{\tilde{a}}$ with its $\tau_{\tilde{a}}$-image in $I_{\tilde{a}}^{2}$.

For any $(x,y)\in V_{\tilde{a}}$ denote $\tilde{a}(s_3(x,y))=xy$ and call it \emph{product}. Then, the action of the group $\mathbb{S}_4$ on $V_{\tilde{a}}$ is given by the formulae ~\eqref{eq:50}, where $i(x)=s_2(x)$, $j(x)=s_1(x)$, $k(x)=(02)(x)$, and the consistency conditions imply the following properties of the product:
\begin{equation}\label{eq:40}
i(xy)=i(y)i(x),\quad k(xy)=k(k(x)j(y))k(y),\quad i(x)(xy)=(yx)i(x)=y.
\end{equation}
However, this product is not necessarily associative, and to repair that, we consider
the minimal $\mathbb{S}_3$-equivariant equivalence relation $\mathcal{S}$
on $I_{\tilde{a}}$ generated by the relations $x(yz)\sim (xy)z$, where all products are supposed to make sense. Let $\pi_{\tilde{a}}\colon I_{\tilde{a}}\to I_{\tilde{a}}/\mathcal{S}$ be  the canonical $\mathbb{S}_3$-equivariant projection on the quotient set. Then, clearly, $\tilde{a}'=\pi_{\tilde{a}}\circ\tilde{a}$ is another (finite) tetrahedral object
with $V_{\tilde{a}'}=V_{\tilde{a}}$ and $ I_{\tilde{a}'}=I_{\tilde{a}}/\mathcal{S}$, and with canonically associated tetrahedral morphism $(\mathrm{id}_{\tilde{a}},\pi_{\tilde{a}})\colon \tilde{a}\to \tilde{a}'$.
Applying the "tilde" operation to $\tilde{a}'$, we obtain a composed
morphism
 \[
 (p_{\tilde{a}'},q_{\tilde{a}'})\circ(\mathrm{id}_{\tilde{a}},\pi_{\tilde{a}})\circ(p_a,q_a)\colon a\to \hat{a}=\widetilde{{\tilde{a}}'}
 \]
  Again, the iterated sequence of such morphisms
\[
a\to\hat{a}\to \hat{\hat{a}}\to\cdots
\]
stabilizes in finite number of steps to an object $\dot{a}$ with $V_{\dot{a}}\subset I_{\dot{a}}^{2}$ and the associative product $xy=\dot{a}(s_3(x,y))$ satisfying the relations~\eqref{eq:40}.

Now,  let $\mathcal{T}$ be the minimal equivalence relation on $I_{\dot{a}}$ generated by the set $(i\times \mathrm{id}_{I_{\dot{a}}})(V_{\dot{a}})$. Denote by
\[
\mathrm{dom}\colon I_{\dot{a}}\to N_{\dot{a}}=I_{\dot{a}}/\mathcal{T}
 \]
 the canonical projection to the quotient set. Define another map
\[
\mathrm{cod}=\mathrm{dom}\circ i\colon I_{\dot{a}}\to N_{\dot{a}}.
\]
In this way, we obtain a graph (quiver) $\Gamma_a$ with the set of arrows $I_{\dot{a}}$, the set of nodes
$N_{\dot{a}}$, and the domain (source)  and the codomain (target) functions $\mathrm{dom}(x)$, $\mathrm{cod}(x)$. Thus, we obtain a finite  $\Delta$-groupoid
with the presentation
\[
C_{\mathrm{fin}}'(a)=\langle \Gamma_a \vert\  x\circ y=xy\ \mathrm{if}\ (x,y)\in  V_{\dot{a}}\rangle
\]
whose distinguished generating set is given by
\(
I_{\dot{a}}.
\)
\end{proof}
\subsection{$\Delta$-complexes}
Let
\[
\Delta^n=\{(t_0,t_1,\ldots,t_n)\in [0,1]^{n+1}\vert\ t_0+t_1+\cdots+t_n=1\},\quad n\ge 0
\]
be the standard $n$-simplex with face inclusion maps
\[
\delta_m\colon \Delta^{n}\to\Delta^{n+1},\quad 0\le m\le n
\] defined by
\[
\delta_m(t_0,\ldots,t_{n})=(t_0,\ldots, t_{m-1},0,t_{m},\ldots,t_n)
\]
A (simplicial) cell in a topological space $X$ is a continuous map
\[
f\colon \Delta^n\to X
\]
 such that the restriction of $f$ to the interior of $\Delta^n$ is an embedding. On the set of cells $\Sigma(X)$ we have the dimension function
 \[
 d\colon\Sigma(X)\to\mathbb{Z},\quad (f\colon \Delta^n\to X) \mapsto n.
 \]
A.~Hatcher in \cite{Hatcher} introduces $\Delta$-complexes as a generalization of simplicial complexes. A $\Delta$-complex structure on a topological space $X$ can be defined as a pair $(\Delta(X), \partial)$, where $\Delta(X)\subset \Sigma(X)$ and $\partial$ is a set of maps
\[
\partial=\left\{\left.\partial_n\colon d\vert_{\Delta(X)}^{-1}(\mathbb{Z}_{\ge \max(1,n)})\to d\vert_{\Delta(X)}^{-1}(\mathbb{Z}_{\ge {n-1}})\ \right\vert\ n\ge0 \right\}
\]
such that:
\begin{itemize}
\item[(i)] each point of $X$ is in the image of exactly one restriction, $\alpha\vert_{{\stackrel{\circ}{\Delta}}^{n}}$ for $\alpha\in \Delta(X)$;

\item[(ii)] $\alpha\circ\delta_m=\partial_m\alpha$;

\item[(iii)]a set $A\subset X$ is open iff $\alpha^{-1}(A)$ is open for each $\alpha\in \Delta(X)$.
\end{itemize}

Clearly, any $\Delta$-complex is a CW-complex.

\subsection{Tetrahedral objects from $\Delta$-complexes}
We  associate a tetrahedral object $a_X$ to a $\Delta$-complex $X$ as follows:
\[
V_{a_X}=\mathbb{S}_{4}\times\Delta^3(X) ,\quad I_{a_X}= \mathbb{S}_{3}\times\Delta^2(X),
\]
which  are  $\mathbb{S}_{n}$-sets with the groups acting by left multiplications on the first components, and
\[
a_X((i3),x)=(g_i,\partial_{i}x),  \  i\in\{0,1,2,3\}, \quad g_0=(012),\  g_1=(12),\ g_2=g_3=1,
\]
where $(33)=1$, and the value $a_X(g,x)$ for $g\ne (i3)$ is uniquely deduced from its $\mathbb{S}_{3}$-equivariance property.

\subsection{Ideal triangulations of knot complements}
A particular class of three dimensional $\Delta$-complexes arises as ideal triangulations of knot complements. A simple caculation shows that in any ideal triangulation there are equal number of edges and tetrahedra and twice as many faces. For example, the two simplest non-trivial knots (the trefoil and the figure-eight) admit ideal triangulations with only two tetrahedra $\{u,v\}$, four faces $\{a,b,c,d\}$ and two edges $\{p,q\}$, the difference being in the gluing rules. Using the notation $(x|\partial_0x,\partial_1x,\ldots,\partial_nx)$ these examples read as follows.
\begin{example}[The trefoil knot] The gluing rules are given by the list
\[
(u|a,b,c,d),\ (v|d,c,b,a),\ (a|p,p,p),\ (b|p,q,p),\ (c|p,q,p),\ (d|p,p,p).
\]
The associated $\Delta$-groupoid $G$ is freely generated by the quiver (oriented graph) consisting of two vertices $A$ and $B$ and two arrows
$x$ and $y$ with
\[
\mathrm{dom}(x)=\mathrm{cod}(x)=\mathrm{dom}(y)=A,\quad \mathrm{cod}(y)=B,
\]
with the distinguished subset
\[
H=\{x^{\pm1},x^{\pm2},y^{\pm1},(xy)^{\pm1}\},\quad j\colon x\mapsto x^{-1},\ x^2\mapsto y,\ x^{-2}\mapsto xy.
\]
One can show that
\[
A'G\simeq B'G\simeq \mathbb{Z}[t,3^{-1}]/(\Delta_{3_1}(t)),\quad \Delta_{3_1}(t)=t^2-t+1.
\]
\end{example}
\begin{example}[The figure-eight  knot] The gluing rules are given by the list
\[
(u|a,b,c,d),\ (v|c,d,a,b),\ (a|p,q,q),\ (b|p,p,q),\ (c|q,p,p),\ (d|q,q,p).
\]
In this case, there are no non-trivial identifications in the corresponding $\Delta$-groupoid $G$, and the ring $A'G$ is isomorphic to certain localization of the ring $S$ from Example~\ref{ex:4_1}, while
\[
B'G\simeq\mathbb{Z}\langle u^{\pm1},v^{\pm1},w^{\pm1}\vert\ u(u+1)=w,\ v(v+1)=w^{-1}\rangle.
\]
Notice that in the Abelian case the defining relations of the ring $B'G$ can be identified with the hyperbolicity equations for the figure-eight knot.
\end{example}
\section{Homology of $\Delta$-groupoids}\label{sec:hom}
Given a $\Delta$-groupoid $G$ with the distinguished generating subset $H$.  We define recursively the following sequence of sets:
\(
V_{-1}=\{*\}
\)
(a singleton or one element set),
\(
V_0=\pi_0(G)
\) (the set of connected components of $G$),
\( V_1=\mathrm{Ob}(G)
\)
(the set of objects or identities of $G$),
\(
V_2=H,
\)
$V_3$ is the set of all $H$-composable pairs, while  $V_{n+1}$ for $n>2$ is the collection of all $n$-tuples $(x_1,x_2,\ldots,x_n)\in H^n$,
satisfying the following conditions:
\[
(x_i,x_{i+1})
\in V_3, \quad 1\le i\le n-1,
\]
 and
 \[
 \partial_i(x_1,\ldots,x_n)\in V_{n},\quad 1\le i\le n+1,
 \] where
\begin{equation}\label{eq:900}
\partial_i(x_1,\ldots,x_n)=\left\{\begin{array}{ll}
(x_2,\ldots,x_{n}),& i=1;\\
 (x_1,\ldots,x_{i-2},x_{i-1}x_i,x_{i+2},\ldots,x_n),& 2\le i\le n;\\
 (x_1,\ldots,x_{n-1}),& i=n+1.
 \end{array}
 \right.
\end{equation}
\begin{remark}
If $N(G)$ is the nerve of $G$, then the system $\{V_n\}_{n\ge 1}$ can be defined as the maximal system of subsets $V_{n+1}\subset N(G)_{n}\cap H^n$, $n=0,1,\ldots$, (with the identification $H^0=\mathrm{Ob}(H)$) closed under the face maps of the simplicial set $N(G)$.
\end{remark}
 For any $H$-composable pair $(x,y)$ we introduce a binary operation
 \begin{equation}\label{eq:*oper}
 x*y=j(k(x)j(y))=jk(x)j(xy)=ij(x)j(xy).
 \end{equation}
 \begin{proposition}
 For any integer $n\ge 2$, if $(x_1,\ldots, x_n)\in V_{n+1}$, then the  $(n-1)$-tuple
 \begin{equation}\label{eq:800}
\partial_0(x_1,\ldots,x_{n})=(y_1,\ldots,y_{n-1}),\quad y_i=z_i*x_{i+1},\quad z_i=x_1x_2\cdots x_i,
\end{equation}
 is an element of $V_{n}$.
 \end{proposition}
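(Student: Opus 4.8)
The plan is to treat the combinatorial faces $\partial_1,\dots,\partial_{n+1}$ of \eqref{eq:900} (which are exactly the nerve faces, and hence preserve the $V_\bullet$ by the very definition of $V_{n+1}$) as known, and to show that the twisted face $\partial_0$ of \eqref{eq:800} fits with them by verifying the simplicial commutation relations $\partial_j\partial_0=\partial_0\partial_{j+1}$. I would argue by induction on $n$, the statement for $n-1$ — that $\partial_0$ carries $V_n$ into $V_{n-1}$ — being the inductive hypothesis. To place $(y_1,\dots,y_{n-1})$ in $V_n$ I must produce three things: that each $y_i\in H$, that consecutive pairs $(y_i,y_{i+1})$ are $H$-composable, and that the combinatorial faces $\partial_j(y_1,\dots,y_{n-1})$ lie in $V_{n-1}$ for $1\le j\le n$. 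Note that membership in $V_n$ does \emph{not} require the twisted face $\partial_0(y_1,\dots,y_{n-1})$ to be controlled, so no self-interaction of $\partial_0$ with itself is needed.

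I would first record a well-definedness lemma: for $(x_1,\dots,x_n)\in V_{n+1}$ every partial product $z_i=x_1\cdots x_i$ lies in $H$. This follows by iterating the merge face $\partial_2$, since $\partial_2(x_1,\dots,x_n)=(z_2,x_3,\dots,x_n)\in V_n\subset H^{n-1}$ forces $z_2\in H$, and repeating drives each $z_i$ into $H$ (the final step being that the two components of a pair in $V_3$ lie in $H$). Consequently $(z_i,x_{i+1})$ is an $H$-composable pair, its product $z_{i+1}$ being again a partial product; so by condition (iv) of Definition~\ref{def:delta} each $y_i=z_i*x_{i+1}=j(k(z_i)j(x_{i+1}))$ is a genuine element of $H$. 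This already settles the base case $n=2$, where $\partial_0(x_1,x_2)=x_1*x_2\in H=V_2$.

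The key observation is that in the form $x*y=ij(x)j(xy)$ of \eqref{eq:*oper} the operator $\partial_0$ becomes a telescoping ``discrete derivative'': setting $w_k=j(z_k)$ one has $y_i=ij(z_i)j(z_{i+1})=w_i^{-1}w_{i+1}$. From this I read off that consecutive $y_i$ are composable, since $y_{i+1}=w_{i+1}^{-1}w_{i+2}$ gives $\mathrm{cod}(y_i)=\mathrm{cod}(w_{i+1})=\mathrm{dom}(y_{i+1})$, and that products telescope: $y_iy_{i+1}=w_i^{-1}w_{i+2}=z_i*(x_{i+1}x_{i+2})$, which lies in $H$ because $z_{i+2}\in H$. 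Hence each $(y_i,y_{i+1})$ is $H$-composable; this is the only condition for $n=3$ (where $V_3$ imposes no further face constraints) and supplies condition (b) in general. The identity $y_{j-1}y_j=z_{j-1}*(x_jx_{j+1})$ is precisely the merge relation I need, and it uses \eqref{myeq:1} only through the equivalence of the two forms of $*$ in \eqref{eq:*oper}, after which it is a one-line cancellation of $w_kw_k^{-1}$. With this in hand the simplicial relation $\partial_j\partial_0=\partial_0\partial_{j+1}$ is pure bookkeeping for the two ``drop'' faces ($j=1,n$) and reduces to the telescoping identity for the ``merge'' faces ($2\le j\le n-1$). Since $\partial_{j+1}(x_1,\dots,x_n)\in V_n$ by hypothesis, the induction hypothesis yields $\partial_0(\partial_{j+1}(x))\in V_{n-1}$, whence $\partial_j(\partial_0(x))\in V_{n-1}$, completing the step.

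The part I expect to demand the most care is not the algebra of $*$, which collapses to telescoping once the form $ij(x)j(xy)$ is used, but rather the well-definedness and partiality bookkeeping: $\partial_0$ and the $\partial_j$ are only partially defined, so at each stage one must check that the relevant partial products and composites genuinely lie in $H$ and are composable, so that the relations $\partial_j\partial_0=\partial_0\partial_{j+1}$ are identities between operations that are actually defined. Equally, the non-recursive low strata $V_3$ and $V_2$ must be handled on their own terms, as above, before the induction for $n\ge 4$ can be run.
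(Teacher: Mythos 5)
Your proposal is correct and takes essentially the same route as the paper's proof: induction on $n$ via the telescoping form $y_i=ij(z_i)j(z_{i+1})$, the product identity $y_iy_{i+1}=z_i*(x_{i+1}x_{i+2})$ giving $H$-composability of consecutive pairs, and the commutation $\partial_j\partial_0=\partial_0\partial_{j+1}$ so that the induction hypothesis applies to the faces $\partial_{j+1}(x_1,\ldots,x_n)\in V_n$, which the paper writes out as explicit identifications of tuples. Your preliminary lemma that the partial products $z_i$ lie in $H$ (hence that all the $*$-operations are defined) and your separate handling of the strata $n=2,3$ merely make explicit what the paper leaves implicit.
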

 \begin{proof}
 We proceed by induction on $n$. For $n=2$ the statement is evidently true. Choose an integer $k\ge3$ and assume the statement is true for $n=k-1$. Let us prove that it is also true for $n=k$. Taking into account the formula
 \[
 y_i=ij(z_i)j(z_{i+1}),\quad 1\le i\le k-1,
 \]
 we see that for any $1\le i\le k-2$, the pair $(y_i,y_{i+1})$ is $H$-composable with the product
 \[
 y_iy_{i+1}=ij(z_i)j(z_{i+2})=ij(z_i)j(z_ix_{i+1}x_{i+2})=z_i*(x_{i+1}x_{i+2}).
 \]
 Now, the $(k-2)$-tuples
 \begin{multline*}
 (y_1,\ldots,y_{i-1}, y_iy_{i+1},y_{i+2},\ldots,y_{k-1})\\
 =\partial_{0}(x_1,\ldots,x_i,x_{i+1}x_{i+2},x_{i+3},\ldots,x_{k}),\quad
 1\le i\le k-2,
 \end{multline*}
 as well as
 \[
 (y_1,\ldots,y_{k-2})=\partial_{0}(x_1,\ldots,x_{k-1}),\quad  (y_2,\ldots,y_{k-1})=\partial_{0}(x_1x_2,\ldots,x_{k-1}),
 \]
 are all in $V_{k-1}$ by the induction hypothesis, and thus $(y_1,\ldots,y_{k-1})\in V_{k}$.
 \end{proof}
Definitions~\eqref{eq:900},\eqref{eq:800} also make sense for $n=2$, and, additionally, we extend them for three more values $n=-1,0,1$ as follows:
\begin{itemize}
\item[$(n=-1)$] as $V_{-1}$ is a terminal object in the category of sets, there are no other choices but one for the map $\partial_0\vert_{V_0}$;
    \item[$(n=0)$] if for $A\in V_1$ we denote $[A]$  the connected component of $G$ defined by $A$, then
 \begin{equation}\label{eq:dv1}
 \partial_0A=[A^*],\ \partial_1A=[A];
 \end{equation}
 \item[$(n=1)$] using the  domain (source) and the codomain (target) maps of the groupoid $G$ (viewed as a category), we define
\begin{equation}\label{eq:dv2}
\partial_0x=\mathrm{cod}(j(x)),\quad \partial_1x=\mathrm{cod}(x),\quad \partial_2x=\mathrm{dom}(x),\quad x\in V_2.
\end{equation}
\end{itemize}

 For any $n\ge-1$, let $B_n=\mathbb{Z}V_n$ be the abelian group freely generated by the elements of $V_n$. Define also $B_n=0$, if $n<-1$.
 We extend linearly the maps $\partial_i$, $i\in\mathbb{Z}_{\ge0}$ to a family of group endomorphisms
  \[
    \partial_i\colon B\to B,\quad B=\oplus_{n\in\mathbb{Z}}B_n
  \]
so that $\partial_i\vert_{B_n}=0$ if $i>n$. Then, the formal linear combination
 \[
 \partial=\sum_{i\ge0}(-1)^i\partial_i
\]
is a well defined endomorphism of the group $B$ such that  $\partial B_n\subset B_{n-1}$.
\begin{proposition}
The group $B$ is a chain complex with differential $\partial$ and grading operator
\(
p\colon B\to B,\quad p\vert_{B_n}=n.
\)
\end{proposition}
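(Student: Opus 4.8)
The plan is to derive the statement from two facts: that $\partial$ is homogeneous of degree $-1$, and that $\partial^2=0$. The first is immediate from what precedes the proposition: since $\partial B_n\subseteq B_{n-1}$ and $p$ acts as the scalar $n$ on $B_n$, one has $p\partial-\partial p=-\partial$ on each $B_n$, which is precisely the compatibility of $\partial$ with the grading operator $p$; together with $\partial^2=0$ this is the definition of a graded chain complex. So the whole content is $\partial^2=0$.

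To organize the computation I would first record that the family $V_\bullet=(V_n)_{n\ge-1}$, equipped with the maps $\partial_0,\partial_1,\ldots,\partial_m\colon V_m\to V_{m-1}$, is an augmented semi-simplicial set: an element of $V_m$ is an $(m-1)$-tuple carrying exactly the $m+1$ faces $\partial_0,\ldots,\partial_m$, matching the prescriptions \eqref{eq:dv1} and \eqref{eq:dv2} in degrees $1$ and $0$. By the standard algebraic lemma, $\partial=\sum_{i\ge0}(-1)^i\partial_i$ then squares to zero as soon as the simplicial identities $\partial_i\partial_j=\partial_{j-1}\partial_i$ hold for all $0\le i<j\le m$, so it suffices to verify these. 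I would split them into two families. The identities not involving $\partial_0$ concern only $\partial_1,\ldots,\partial_m$, which by \eqref{eq:900} are precisely the face maps of the nerve $N(G)$ re-indexed by $\partial_{k+1}=d_k$; they hold because $N(G)$ is a simplicial set and, by the Remark, $V_\bullet$ is closed under them.

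For the identities $\partial_0\partial_j=\partial_{j-1}\partial_0$ I would use the closed form of $\partial_0$ coming from \eqref{eq:800}, namely $y_i=ij(z_i)j(z_{i+1})$ with $z_i=x_1\cdots x_i$ (all $z_i\in H$ because $(x_1,\ldots,x_n)\in V_{n+1}$). For $j\ge 2$ and for the last face the verification is a pure telescoping: composing two consecutive output edges gives $ij(z_{k-1})j(z_k)\cdot ij(z_k)j(z_{k+1})=ij(z_{k-1})j(z_{k+1})$, which matches the effect of first composing (or deleting) inside $(x_1,\ldots,x_n)$ and then applying $\partial_0$. The one genuinely delicate identity, and the main obstacle, is $\partial_0\partial_1=\partial_0\partial_0$, which is where the defining relation \eqref{myeq:1} must enter. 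Writing $a=z_1$, both sides reduce to comparing the two edge sequences built from the maps $G(c)=j(a^{-1}c)$ and $F(c)=j(j(a)^{-1}j(c))$ evaluated at $c=z_2,z_3,\ldots$. Applying the equivalent form $j(xy)=j(x)\,j(k(x)j(y))$ of \eqref{myeq:1} with $x=i(a)$, $y=c$, and using $j(i(a))=ji(a)$ together with $k(i(a))=ij(a)=j(a)^{-1}$, gives $G(c)=ji(a)\,F(c)$, i.e. $F(c)=k(a)\,G(c)$ with the \emph{fixed} prefactor $k(a)$. Since the two face maps produce differences $F(z_{i+1})^{-1}F(z_{i+2})$ and $G(z_{i+1})^{-1}G(z_{i+2})$, this constant prefactor cancels and the two sides coincide. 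I expect this cancellation to be the crux of the argument.

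Finally I would dispatch the low-degree cases $m\le 2$ by hand, since there $\partial_0$ is given by the special formulas \eqref{eq:dv1}, \eqref{eq:dv2}. On $V_2=H$ one computes $\partial x=\mathrm{cod}(j(x))-\mathrm{cod}(x)+\mathrm{dom}(x)$ and then expands $\partial^2 x$ via \eqref{eq:dv1}; using that a morphism in $H$ identifies the classes of its source and target in $\pi_0(G)$, that $A^*=\mathrm{dom}(j(\cdot))$ as in Remark~\ref{rm:inv}, and the relation $iji=jij$, the six resulting terms cancel in pairs. The cases $m=0,1$ are immediate from \eqref{eq:dv1} and the fact that $V_{-1}$ is terminal, completing the proof that $\partial^2=0$ and hence that $(B,\partial,p)$ is a graded chain complex.
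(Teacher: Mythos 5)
Your overall strategy is sound, and despite the different packaging it is essentially the paper's computation. The paper writes $\partial=\partial_0-\partial'$ with $\partial'=\partial_1-\partial_2+\partial_3-\cdots$ the (restricted) nerve differential, so that $\partial^2=0$ reduces to the anticommutator identity $\partial_0^2=\partial_0\partial'+\partial'\partial_0$, which it then declares "straightforwardly checked on basis elements". Your term-by-term verification of the identities $\partial_0\partial_j=\partial_{j-1}\partial_0$ for $j\ge2$ together with $\partial_0\partial_1=\partial_0\partial_0$ is exactly equivalent to that unwritten check (substituting the $j\ge 2$ identities into $\partial_0\partial'+\partial'\partial_0$ collapses it to $\partial_0\partial_1$), and your key cancellation is correct: with $G(c)=j(a^{-1}c)$, $F(c)=j\bigl(j(a)^{-1}j(c)\bigr)$, the partial products satisfy $y_1\cdots y_i=ij(z_1)j(z_{i+1})$, the equivalent form $ij(x)j(xy)=j(k(x)j(y))$ of \eqref{myeq:1} applied at $x=i(a)$ gives $F(c)=k(a)G(c)$, and the fixed prefactor $k(a)$ indeed drops out of the differences $F(z_{i+1})^{-1}F(z_{i+2})$. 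This is precisely the content the paper suppresses, so you have supplied the crux correctly.

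There is, however, an off-by-one hole in your case analysis. Your general argument requires \emph{both} successive face maps to be given by the tuple formulas \eqref{eq:900} and \eqref{eq:800}, which happens only when one starts from $V_m$ with $m\ge4$; your by-hand verification stops at $m\le 2$. This leaves degree $3$ uncovered: the identities $\partial_0\partial_1=\partial_0\partial_0$, $\partial_0\partial_2=\partial_1\partial_0$, $\partial_0\partial_3=\partial_2\partial_0$ as maps $V_3\to V_1$, where the inner face $\partial_0(x,y)=x*y$ is a single element of $V_2=H$ and the outer faces are the special maps \eqref{eq:dv2} rather than differences of the form $F^{-1}F$ --- so neither the telescoping nor the prefactor cancellation applies there. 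These identities do hold, and by a check of the same flavour as your $V_2$ computation: since $j$ is an involution, $j(x*y)=k(x)j(y)$, whence $\partial_0\partial_0(x,y)=\mathrm{cod}(j(x*y))=\mathrm{cod}(k(x)j(y))=\mathrm{cod}(j(y))=\partial_0\partial_1(x,y)$; likewise $\partial_1(x*y)=\mathrm{cod}\bigl(ij(x)j(xy)\bigr)=\mathrm{cod}(j(xy))=\partial_0(xy)$ and $\partial_2(x*y)=\mathrm{dom}\bigl(ij(x)\bigr)=\mathrm{cod}(j(x))=\partial_0(x)$. With this degree-$3$ verification inserted, your proof is complete and matches the paper's in substance.
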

\begin{proof}
It is immediate to see that the group homomorphism $\partial'=\partial_0-\partial$ is a restriction of the differential of the standard integral (augmented) chain complex of the nerve $N(G)$ so that $\partial'^2=0$.
Now, due to the latter equation, the equation $\partial^2=0$ is equivalent to the equation
\[
\partial_0^2=\partial_0\partial'+\partial'\partial_0
\]
which can straightforwardly be checked on basis elements.
\end{proof}
\begin{proposition}
There exists a unique sequence of set-theoretical maps
\[
\delta_i\colon \mathbb{S}_\infty\to\mathbb{S}_\infty,\quad i\in\mathbb{Z}_{\ge0},
\]
such that for any $j\in\mathbb{Z}_{>0}$,
\begin{equation}\label{eq:cocycle}
\delta_i(s_j)=\left\{\begin{array}{cl}
s_{j-1}&\mathrm{if}\ i<j-1;\\
1&\mathrm{if}\ i\in\{j-1,j\};\\
s_j&\mathrm{if}\ i>j,
\end{array}
\right.
\end{equation}
and
\begin{equation}\label{eq:prod-prop}
\delta_i(gh)=\delta_i(g)\delta_{g^{-1}(i)}(h),\quad \forall g,h\in\mathbb{S}_\infty.
\end{equation}
\end{proposition}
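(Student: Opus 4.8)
The plan is to establish uniqueness first, and then to produce the maps by an explicit construction that makes existence essentially automatic. For uniqueness, note that setting $g=h=1$ in \eqref{eq:prod-prop} gives $\delta_i(1)=\delta_i(1)\delta_i(1)$ in the group $\mathbb{S}_\infty$, whence $\delta_i(1)=1$ for every $i$. Now take an arbitrary $g\in\mathbb{S}_\infty$ and write it as a word $g=s_{j_1}s_{j_2}\cdots s_{j_m}$ in the generators. Iterating \eqref{eq:prod-prop} and using $s_j^{-1}=s_j$ yields
\[
\delta_i(g)=\delta_{i_0}(s_{j_1})\,\delta_{i_1}(s_{j_2})\cdots\delta_{i_{m-1}}(s_{j_m}),\qquad i_0=i,\ i_k=s_{j_k}(i_{k-1}).
\]
Each factor on the right is prescribed by \eqref{eq:cocycle}, so the value $\delta_i(g)$ is forced; hence at most one family $\{\delta_i\}$ can satisfy the two requirements.

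For existence I would exhibit the maps concretely. View $\mathbb{S}_\infty$ as the group of permutations of $\mathbb{Z}_{\ge0}$ fixing all but finitely many points, with the standard (left-action) composition $(gh)(x)=g(h(x))$. For each $i\in\mathbb{Z}_{\ge0}$ let $\varepsilon_i\colon\mathbb{Z}_{\ge0}\to\mathbb{Z}_{\ge0}$ be the order-preserving injection omitting the value $i$, i.e. $\varepsilon_i(x)=x$ for $x<i$ and $\varepsilon_i(x)=x+1$ for $x\ge i$, which is a bijection onto $\mathbb{Z}_{\ge0}\setminus\{i\}$. Since $g$ restricts to a bijection $\mathbb{Z}_{\ge0}\setminus\{g^{-1}(i)\}\to\mathbb{Z}_{\ge0}\setminus\{i\}$, the composite
\[
\delta_i(g)=\varepsilon_i^{-1}\circ g\circ\varepsilon_{g^{-1}(i)}
\]
is a well-defined bijection of $\mathbb{Z}_{\ge0}$ fixing all large integers, so $\delta_i(g)\in\mathbb{S}_\infty$. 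The product property \eqref{eq:prod-prop} then follows by a one-line telescoping: using $(gh)^{-1}(i)=h^{-1}(g^{-1}(i))$ and inserting $\varepsilon_{g^{-1}(i)}\varepsilon_{g^{-1}(i)}^{-1}$,
\[
\delta_i(gh)=\varepsilon_i^{-1}g\,\varepsilon_{g^{-1}(i)}\cdot\varepsilon_{g^{-1}(i)}^{-1}h\,\varepsilon_{h^{-1}(g^{-1}(i))}=\delta_i(g)\,\delta_{g^{-1}(i)}(h).
\]

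It remains to check that this $\delta_i$ agrees with \eqref{eq:cocycle} on the generators $s_j=(j-1,j)$, which is the only genuinely computational step. As $s_j$ is an involution, $\delta_i(s_j)=\varepsilon_i^{-1}\circ s_j\circ\varepsilon_{s_j(i)}$, and I distinguish the cases $i>j$, $i<j-1$, and $i\in\{j-1,j\}$. In the two extreme cases $s_j(i)=i$, so $\delta_i(s_j)=\varepsilon_i^{-1}s_j\varepsilon_i$ is the transposition $(\varepsilon_i^{-1}(j-1),\varepsilon_i^{-1}(j))$, equal to $(j-1,j)=s_j$ when $i>j$ and to $(j-2,j-1)=s_{j-1}$ when $i<j-1$; in the two middle cases a direct evaluation at each $x\in\mathbb{Z}_{\ge0}$ gives $\delta_i(s_j)=1$. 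This reproduces \eqref{eq:cocycle} exactly. The main obstacle is purely bookkeeping: one must pin down the composition convention on $\mathbb{S}_\infty$ together with the action, so that the transported index in \eqref{eq:prod-prop} comes out as $g^{-1}(i)$ on the \emph{right} factor (an opposite convention would instead place the index $h(i)$ on the left factor, forcing a mirrored formula). Observe that the closed formula bypasses the alternative, more laborious existence argument of verifying directly that the prescription \eqref{eq:cocycle} is compatible with the Coxeter relations $s_j^2=1$, $s_js_{j+1}s_j=s_{j+1}s_js_{j+1}$, and $s_js_k=s_ks_j$ for $|j-k|\ge2$, since for the explicit $\delta_i$ the identity \eqref{eq:prod-prop} holds tautologically.
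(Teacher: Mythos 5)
Your proof is correct, but your existence argument takes a genuinely different route from the paper's. The paper constructs $\delta_i$ exactly the way your uniqueness argument suggests: the values on generators are prescribed by \eqref{eq:cocycle}, the value on an arbitrary word is forced by iterating \eqref{eq:prod-prop}, and the whole burden of existence is shifted to checking that this prescription is compatible with the defining (Coxeter) relations of $\mathbb{S}_\infty$ --- the involution relations $\delta_i(s_j)\delta_{s_j(i)}(s_j)=1$, the commutation relations for $|j-k|>1$, and the braid relations --- a multi-case verification which the paper declares ``straightforward'' and omits. You instead bypass well-definedness entirely by exhibiting the closed formula $\delta_i(g)=\varepsilon_i^{-1}\circ g\circ\varepsilon_{g^{-1}(i)}$, where $\varepsilon_i$ is the order-preserving bijection $\mathbb{Z}_{\ge0}\to\mathbb{Z}_{\ge0}\setminus\{i\}$; with this definition \eqref{eq:prod-prop} is a one-line telescoping identity, and the only computation left is matching \eqref{eq:cocycle} on the generators $s_j$, which you carry out correctly in all four cases. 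What each approach buys: the paper's argument is self-contained in the presentation of $\mathbb{S}_\infty$ and requires no guess, but its omitted verification is the real content (the braid relation in particular splits into several subcases according to the position of $i$); your argument requires producing the formula, but in exchange it identifies $\delta_i$ conceptually as ``delete the point $i$ and renumber'' (these are in essence the face maps of the simplicial structure on the family of symmetric groups), makes existence tautological, and isolates the single genuine computation. You are also right to flag the composition convention $(gh)(x)=g(h(x))$ as the one bookkeeping point on which the placement of the index $g^{-1}(i)$ in \eqref{eq:prod-prop} depends; with the opposite convention the formula would indeed come out mirrored, and the paper never states its convention for $\mathbb{S}_\infty$ explicitly.
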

\begin{proof}
From the identity
\[
\delta_i(g)=\delta_i(1g)=\delta_i(1)\delta_i(g)
\]
it follows immediately that $\delta_i(1)=1$ for any $i\in\mathbb{Z}_{\ge0}$.
To prove the statement it is enough to check consistency of the defining relations of the group $\mathbb{S}_\infty$ with formulae~\eqref{eq:cocycle}, \eqref{eq:prod-prop}, i.e. the equations
\begin{multline*}
\delta_i(s_j)\delta_{s_j(i)}(s_k)=\delta_i(s_k)\delta_{s_k(i)}(s_j),\quad |j-k|>1,\\
 \delta_i(s_j)\delta_{s_j(i)}(s_{j+1})\delta_{s_{j+1}s_j(i)}(s_j)
=\delta_i(s_{j+1})\delta_{s_{j+1}(i)}(s_{j})\delta_{s_js_{j+1}(i)}(s_{j+1}),\\
\delta_i(s_j)\delta_{s_j(i)}(s_j)=1,\quad
\forall i\in\mathbb{Z}_{\ge0},\ \forall j\in\mathbb{Z}_{>0}.
\end{multline*}
This is a straightforward verification.
\end{proof}
\begin{remark}
For any $n\ge 1$, we have $\delta_i(\mathbb{S}_{n})\subset \mathbb{S}_{n-1}$, $0\le i\le n$.
\end{remark}
\begin{proposition}
For any $n\ge-1$ the set $V_n$ has a unique canonical (i.e. independent of particularities of a $\Delta$-groupoid) structure of an  $\mathbb{S}_{n+1}$-set such that
\begin{equation}\label{eq:intertwining}
\partial_i\circ g=\delta_i(g)\circ\partial_{g^{-1}(i)},\quad 0\le i\le n,\ g\in\mathbb{S}_{n+1}.
\end{equation}
\end{proposition}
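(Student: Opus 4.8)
The plan is to induct on $n$, the crucial structural fact being that for $n\ge 3$ an element of $V_n$ is recovered from its faces: by \eqref{eq:900} one has $\partial_1(x_1,\ldots,x_{n-1})=(x_2,\ldots,x_{n-1})$ and $\partial_n(x_1,\ldots,x_{n-1})=(x_1,\ldots,x_{n-2})$, and these two tuples together pin down all of $x_1,\ldots,x_{n-1}$. Consequently a self-map of $V_n$ is determined by its composites with the $\partial_i$, and \eqref{eq:intertwining} may be read as a \emph{definition}: for $g\in\mathbb{S}_{n+1}$ and $v\in V_n$ set $\partial_i(g\cdot v):=\delta_i(g)\cdot\partial_{g^{-1}(i)}(v)$, the right-hand side being meaningful because $\delta_i(g)\in\mathbb{S}_{n}$ acts on $V_{n-1}$ by the inductive hypothesis. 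This already yields uniqueness in the range $n\ge 3$. The bottom of the induction is supplied by the $\Delta$-groupoid data itself: on $V_{-1},V_0$ the acting group is trivial, on $V_1=\mathrm{Ob}(G)$ the nontrivial generator acts by the canonical involution $A\mapsto A^{*}$ of Remark~\ref{rm:inv}, and on $V_2=H$ by $s_1=j$, $s_2=i$; for these small $n$ one verifies \eqref{eq:intertwining} directly against the faces \eqref{eq:dv1}, \eqref{eq:dv2} and the generator values \eqref{eq:cocycle} of $\delta_i$, and the requirement of canonicity singles these out.

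Granting for the moment that each $g\cdot v$ so defined lands in $V_n$, the group-action axioms are then purely formal. Indeed, computing $\partial_i\bigl(g\cdot(h\cdot v)\bigr)=\delta_i(g)\,\delta_{g^{-1}(i)}(h)\cdot\partial_{h^{-1}g^{-1}(i)}(v)$ and invoking the cocycle identity \eqref{eq:prod-prop}, this equals $\delta_i(gh)\cdot\partial_{(gh)^{-1}(i)}(v)=\partial_i\bigl((gh)\cdot v\bigr)$; since faces separate points of $V_n$ we get $g\cdot(h\cdot v)=(gh)\cdot v$, while $\delta_i(1)=1$ gives $1\cdot v=v$. The intertwining relation \eqref{eq:intertwining} then holds for every $g$ by construction, and no separate check of the Coxeter relations among the $s_j$ is required.

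The real content, and the step I expect to be the main obstacle, is \emph{existence}: that for a generator $s_j$ the prescribed family of faces $\bigl(\delta_i(s_j)\cdot\partial_{s_j(i)}(v)\bigr)_{0\le i\le n}$ is internally compatible and is realized by an honest tuple $s_j\cdot v\in V_n$. Since any tuple is reconstructed from its faces $\partial_1$ and $\partial_n$, this amounts to checking that the two prescribed faces overlap consistently and that the reconstructed tuple meets the membership conditions defining $V_n$---adjacent pairs in $V_3$ and all faces, including the twisted face $\partial_0$ of \eqref{eq:800}, landing back in $V_{n-1}$. Each such check is a simplicial face identity which, after one further application of \eqref{eq:intertwining}, descends to a relation already available on $V_{n-1}$, so the induction propagates cleanly upward. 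Its only irreducible input is the bottom step, that the face-prescription on $V_3$---equivalently the explicit formulae \eqref{eq:50}---produces an $H$-composable pair and hence a genuine element of $V=V_3$. That single verification is exactly where the defining identity \eqref{myeq:1} (equivalently the product relations \eqref{eq:40}) is consumed, the braid relation among $s_1,s_2,s_3$ on $V$ being a repackaging of \eqref{myeq:1}. I therefore expect this $n=3$ consistency, resting on axiom (iv), to be the crux; everything above it is bookkeeping governed by the cocycle property \eqref{eq:prod-prop}.
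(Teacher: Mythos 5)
Your proposal is correct in outline, and its uniqueness half is exactly the paper's argument: for $n\ge 3$ the two deletion faces $\partial_1$ and $\partial_n$ of \eqref{eq:900} pin down a tuple, so \eqref{eq:intertwining} plus induction on $n$ forces the action, while for $n\le 2$ the paper, like you, falls back on canonicity to single out $s_1(A)=A^*$ on $V_1$ and $s_1=j$, $s_2=i$ on $V_2=H$. Where you genuinely diverge is on existence. The paper exhibits explicit closed formulas for the generator actions --- $s_1(x_1,\ldots,x_{n-1})=(j(x_1),x_1*x_2,(x_1x_2)*x_3,\ldots)$, $s_2(x_1,\ldots)=(x_1^{-1},x_1x_2,x_3,\ldots)$, and so on, built from the operation \eqref{eq:*oper} --- and asserts (without detail) that these solve the system; you instead never write formulas, defining $s_j\cdot v$ by reconstruction from its two prescribed deletion faces and arguing that every compatibility and membership check descends inductively to $V_{n-1}$, bottoming out at the $\mathbb{S}_4$-structure \eqref{eq:50} on $V_3$, i.e.\ at axiom (iv). Your route buys two things the paper does not make explicit: the Coxeter relations and the action axioms come for free from the cocycle identity \eqref{eq:prod-prop} once faces separate points, and the role of \eqref{myeq:1} is isolated in a single base case. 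The paper's route buys concrete formulas (reused implicitly elsewhere) and a verification that is finite and mechanical rather than structured as a second induction. One caveat: your phrase ``one further application of \eqref{eq:intertwining}'' understates the bookkeeping --- the middle faces, the overlap consistency, and especially the twisted face $i=0$ of \eqref{eq:800} require the simplicial identities among the $\partial_i$ (including those involving $\partial_0$) together with the coherence of the $\delta_i$, which is real work; but this is precisely the work the paper also leaves behind its ``one can show,'' so the two proofs sit at a comparable level of completeness.
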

\begin{proof}
For $n\le0$ the statement is trivial. For $n>0$ and $g\in\{s_1,\ldots,s_n\}$, equations~\eqref{eq:intertwining} take the form
\begin{equation}\label{eq:int-gen}
\partial_i\circ s_j=\left\{
\begin{array}{cl}
    s_{j-1}\circ\partial_i&\mathrm{if}\ i<j-1;\\
    \partial_j&\mathrm{if}\ i=j-1;\\
    \partial_{j-1}&\mathrm{if}\ i=j;\\
    s_j\circ\partial_i&\mathrm{if}\ i>j.
\end{array}
\right.
\end{equation}

In the case $n=1$, together with equations~\eqref{eq:dv1}, these imply that for any $A\in V_1$
\[
\partial_i(s_1(A))=\partial_i(A^*),\quad i\in\{0,1\}.
\]
The only canonical solution to these equations is of the form
\begin{equation}\label{eq:perm-n=1}
s_1(A)=A^*,\quad \forall A\in V_1.
\end{equation}

In the case  $n=2$, equations~\eqref{eq:int-gen}, \eqref{eq:dv2}, and \eqref{eq:perm-n=1} imply that
\[
\partial_m\circ s_1=\partial_m\circ j,\quad \partial_m\circ s_2=\partial_m\circ i,\quad 0\le m\le 2,
\]
with only one canonical solution of the form
\[
s_1(x)=j(x),\quad s_2(x)=x^{-1},\quad \forall x\in V_2.
\]

In the case $n>2$, one can show that the following formula constitutes a solution to system~\eqref{eq:int-gen}:
\begin{multline}
s_i(x_1,\ldots,x_{n-1})\\
=\left\{
\begin{array}{ll}
(j(x_1),x_1*x_2,(x_1x_2)*x_3,\ldots,(x_1\cdots x_{n-2})*x_{n-1})&\mathrm{if}\ i=1;\\
(x_1^{-1},x_1x_2,x_3,\ldots,x_{n-1})&\mathrm{if}\ i=2;\\
(x_1,\ldots,x_{i-3},x_{i-2}x_{i-1},x_{i-1}^{-1},x_{i-1}x_i,x_{i+1},\ldots,x_{n-1})&\mathrm{if}\ 2<i<n;\\
(x_1,\ldots,x_{n-3},x_{n-2}x_{n-1},x_{n-1}^{-1})&\mathrm{if}\ i=n,
\end{array}
\right.
\end{multline}
where we use the binary operation~\eqref{eq:*oper}. Let us show that there are no other (canonical) solutions.

We proceed by induction on $n$. The case $n\le2$ has already been proved. Assume, that the solution is unique for $n=m-1\ge2$. For $n=m$, equation~\eqref{eq:intertwining} with $i\in\{1,m\}$ implies that
\begin{eqnarray*}
(y_2,\ldots,y_{m-1})&=&\delta_1(g)(\partial_{g^{-1}(1)}(x_1,\ldots,x_{m-1})),\\
(y_1,\ldots,y_{m-2})&=&\delta_m(g)(\partial_{g^{-1}(m)}(x_1,\ldots,x_{m-1})),
\end{eqnarray*}
where $(y_1,\ldots,y_{m-1})=g(x_1,\ldots,x_{m-1})$. By the induction hypothesis, the right hand sides of these equations are uniquely defined, and so are their left hand sides. The latter, in turn, uniquely determine the $(m-1)$-tuple $(y_1,\ldots,y_{m-1})$, and, thus, the solution is unique for $n=m$.
\end{proof}
\begin{proposition}
The sub-group $A\subset B$, generated by the set of elements
\[
\cup_{n\ge1}\{ x+s_ix\vert\ x\in V_n,\  1\le i\le n\},
\] is a chain sub-complex.
\end{proposition}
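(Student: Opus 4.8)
The plan is to show that $\partial A\subset A$; since $A$ is already a subgroup of $B$, this is all that is needed for it to be a chain subcomplex. Because $A$ is generated by the elements $x+s_ix$ with $x\in V_n$ and $1\le i\le n$, it suffices to verify that $\partial(x+s_ix)\in A$ for each such generator. First I would write out $\partial=\sum_{\ell\ge0}(-1)^\ell\partial_\ell$ and apply it term by term to $x$ and to $s_ix$ separately.

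The essential tool is the intertwining relation~\eqref{eq:intertwining}, specialized to the transposition $g=s_i$. By~\eqref{eq:int-gen} (which is in turn governed by the cocycle formula~\eqref{eq:cocycle} together with $s_i^{-1}=s_i$) one reads off
\[
\partial_\ell\, s_i=\begin{cases}
s_{i-1}\,\partial_\ell & \text{if } \ell<i-1,\\
\partial_i & \text{if } \ell=i-1,\\
\partial_{i-1} & \text{if } \ell=i,\\
s_i\,\partial_\ell & \text{if } \ell>i.
\end{cases}
\]
Substituting this into $\partial(s_ix)$ and collecting terms by the value of $\ell$ is the heart of the argument.

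Away from the indices $\ell\in\{i-1,i\}$ the contributions of $\partial x$ and of $\partial(s_ix)$ pair up: for $\ell<i-1$ one obtains $(-1)^\ell(\partial_\ell x+s_{i-1}\partial_\ell x)$, and for $\ell>i$ one obtains $(-1)^\ell(\partial_\ell x+s_i\partial_\ell x)$. In each case $\partial_\ell x\in V_{n-1}$, and the transposition index ($i-1$, respectively $i$) lands in $\{1,\dots,n-1\}$ precisely when the corresponding range of $\ell$ is nonempty; hence every such term is exactly a generator $y+s_m y$ of $A$ with $y\in V_{n-1}$. This is the step I expect to require the most care: one must confirm that the intertwining relation delivers the \emph{correct} transposition ($s_{i-1}$ below the diagonal and $s_i$ above it), so that the output is again a generator of $A$ rather than some unrelated element — this is really where the $\delta_i$-cocycle structure does the work.

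It then remains to treat $\ell\in\{i-1,i\}$. There the four surviving terms, namely $(-1)^{i-1}\partial_{i-1}x+(-1)^{i}\partial_i x$ from $\partial x$ and $(-1)^{i-1}\partial_i x+(-1)^{i}\partial_{i-1}x$ from $\partial(s_ix)$, cancel identically because of the alternating signs. Consequently $\partial(x+s_ix)$ is an integer combination of generators of $A$, which gives $\partial A\subset A$. I would close by checking the boundary cases $i=1$, $i=n$, and $n=1$, where one or both of the outer sums is empty; in particular for $n=1$, $i=1$ the identities~\eqref{eq:dv1} and~\eqref{eq:perm-n=1} yield $\partial(A+A^*)=0$, in agreement with the general formula.
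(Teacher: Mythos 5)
Your proposal is correct and takes essentially the same route as the paper's own proof: both split $\partial(x+s_ix)$ into the ranges $\ell<i-1$, $\ell\in\{i-1,i\}$, $\ell>i$, apply the intertwining relations~\eqref{eq:int-gen} with $g=s_i$, observe that the middle four terms cancel by the alternating signs, and recognize the surviving terms as generators $(1+s_{i-1})\partial_\ell x$ and $(1+s_i)\partial_\ell x$ of $A_{n-1}$. Your explicit check that the transposition indices stay in $\{1,\dots,n-1\}$ and your treatment of the degenerate cases $i=1$, $i=n$, $n=1$ are details the paper leaves implicit, but they match its computation exactly.
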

\begin{proof} From the definition of $A$ it follows that $A=\oplus_{n\in\mathbb{Z}}A_n$ with $A_n=A\cap B_n$. Let us show that $\partial A_n\subset A_{n-1}$. For any $x\in V_n$ and $1\le i\le n$, we have
\begin{multline*}
\partial(x+s_ix)=\sum_{j=0}^{i-2}(-1)^j\partial_j(x+s_ix)+\sum_{j=i-1}^{i}(-1)^j\partial_j(x+s_ix)+
\sum_{j=i+1}^{n}(-1)^j\partial_j(x+s_ix)\\
=\sum_{j=0}^{i-2}(-1)^j(1+s_{i-1})\partial_jx+\sum_{j=i-1}^{i}(-1)^j(\partial_jx+\partial_{s_i(j)}x)+
\sum_{j=i+1}^{n}(-1)^j(1+s_i)\partial_jx\\
=\sum_{j=0}^{i-2}(-1)^j(1+s_{i-1})\partial_jx+
\sum_{j=i+1}^{n}(-1)^j(1+s_i)\partial_jx\in A_{n-1}.
\end{multline*}

\end{proof}
Thus, there is a short exact  sequence of chain complexes:
\[
0\to A\to B\to C=B/A\to 0.
\]
\begin{definition}
\emph{Integral homology} $H_*(G)$ of a $\Delta$-groupoid $G$ is the homology of the chain complex $C=A/B$.
\end{definition}

\begin{conjecture}\label{homol-conj}
Let $T$ and $T'$ be two ideal triangulations of a (cusped) 3-manifold, related by a $2-3$ Pachner move. Let $G$ and $G'$ be the associated $\Delta$-groupoids. Then, the corresponding homology groups $H_*(G)$ and $H_*(G')$ are isomorphic.
\end{conjecture}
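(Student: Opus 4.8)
The plan is to reduce the assertion to a purely local comparison and then to realize the $2$--$3$ move by an explicit chain-homotopy equivalence. A $2$--$3$ move is supported on a bipyramid which on one side is triangulated by two tetrahedra glued along a single internal face, and on the other by three tetrahedra arranged around a single internal edge; the six boundary faces and their gluing to the remainder of the triangulation are left untouched. Using the presentation functor $C_{\mathrm{fin}}'$ of Section~\ref{sec:pres}, I would write $G$ and $G'$ as the $\Delta$-groupoids presented by tetrahedral objects that coincide away from this block, so that the two chain complexes $B$ and $B'$ agree outside it. Passing from $T$ to $T'$ then adds exactly one node (the new central edge, contributing in $V_1$), a net of two generators of the distinguished set $H$ (the internal faces jump from one to three, counted as $\mathbb{S}_3$-orbits in $V_2$), and one composable pair (the extra tetrahedron, an $\mathbb{S}_4$-orbit in $V_3$). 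The signed total $1-2+1=0$ of the added data is the first sign that the modification ought to be homologically invisible.

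I would then construct a chain map $\phi\colon C(G)\to C(G')$ equal to the identity on the part disjoint from the block, and sending the old local generators (one internal face, two tetrahedra) to the $\mathbb{Z}$-combinations of new ones dictated by the subdivision. Its expected homotopy inverse $\psi$ collapses the central edge and re-absorbs the third tetrahedron along the elementary collapse that returns the three-tetrahedron bipyramid to the two-tetrahedron one. The compositions $\phi\psi$ and $\psi\phi$ should differ from the respective identities by boundaries of explicit degree-raising operators built from the new tetrahedron; equivalently, one shows that the mapping cone of $\phi$ is acyclic by producing a contracting homotopy on the subcomplex spanned by the added cells. Since this added piece has vanishing signed count and geometrically fills a ball, such a contraction should exist, but upgrading it to a genuine chain homotopy is where the $\Delta$-groupoid algebra must be used.

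The main obstacle is compatibility with the full differential $\partial=\sum_{i\ge0}(-1)^i\partial_i$ and with descent to the quotient $C=B/A$. The face $\partial_0$ is twisted by the binary operation $x*y=ij(x)j(xy)$ of~\eqref{eq:*oper}, so the boundary~\eqref{eq:800} of a composable tuple involves the running products $z_i=x_1\cdots x_i$; a tuple in $V_n$ may thread through the modified block, and its $\partial_0$-image can land on either side of the move. Checking that $\phi$ commutes with $\partial_0$, and not merely with the easy nerve part $\partial'=\partial_0-\partial$, is exactly the point where coherence is forced to intervene: one must invoke the associativity of the product guaranteed by $C_{\mathrm{fin}}'$ together with the defining identity~\eqref{myeq:1} and the derived relations~\eqref{eq:40}, which encode precisely the pentagon (Biedenharn--Elliott) symmetry underlying the $2$--$3$ move. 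In parallel, $\phi$ and the homotopy operators must respect the $\mathbb{S}_{n+1}$-actions on the $V_n$ and carry the subcomplex $A$ generated by the elements $x+s_ix$ into itself; because these actions are global and mix the block with its complement, this equivariant descent is the delicate step and is what I expect to consume most of the work.

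Once $\phi$ is established as a chain-homotopy equivalence of the $B$-complexes respecting $A$, it descends to $C=B/A$ and yields the desired isomorphism $H_*(G)\cong H_*(G')$ for a single $2$--$3$ move. Combined with Pachner's theorem this local invariance would be the engine of full topological invariance of $H_*$, although the present statement requires only the one move.
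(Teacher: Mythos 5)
The first thing to say is that the paper itself offers no proof of this statement: it is stated as Conjecture~\ref{homol-conj} and left open, with only a remark that it holds for geometric ideal triangulations of hyperbolic knot complements, where one has an identification $H_*(G_K)\simeq\tilde H_*(\mathbb{S}^3/K,\mathbb{Z})$. So there is no argument in the paper to compare yours against; the question is whether your sketch closes the conjecture on its own, and it does not.

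The decisive gap is your locality premise, namely that ``the two chain complexes $B$ and $B'$ agree outside the block.'' The chain complex is not built from the cells of the triangulation; it is built from the $\Delta$-groupoid $G=C_{\mathrm{fin}}'(a_T)$, which is a global quotient. In the construction of $C_{\mathrm{fin}}'$, the tetrahedral object is repeatedly collapsed by the equivalence relations $\mathcal{R}$, $\mathcal{S}$, $\mathcal{T}$ (injectivity of $\tau$, associativity, identification of objects), and adding or removing a tetrahedron changes which identifications are forced \emph{everywhere}, not just near the bipyramid: $V_2=H$ is the set of face-generators \emph{after} these identifications, $V_3$ is the set of $H$-composable pairs (a condition referring to products in the whole groupoid and to membership of $xy$ in $H$), and $V_{n+1}\subset H^n$ is cut out by global composability conditions. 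The paper itself warns that the $\Delta$-groupoids attached to two triangulations of the same knot complement are in general \emph{not} isomorphic, so some global change must occur, and your map $\phi$ cannot be taken to be ``the identity away from the block'' --- that phrase has no meaning once the generating sets themselves differ. Relatedly, your signed count $1-2+1=0$ counts cells of $T'$ relative to $T$, but the ranks of the chain groups $B_n$ are the cardinalities of the $V_n$ (composable $n$-tuples in $H$), which are not in any evident bijection-up-to-a-ball with those of $B'_n$; the Euler-characteristic heuristic does not transfer to these complexes.

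Beyond this, the two steps you correctly flag as delicate --- compatibility of $\phi$ and the homotopy operators with the twisted face map $\partial_0$ of~\eqref{eq:800}, and equivariant descent to $C=B/A$ past the $\mathbb{S}_{n+1}$-actions --- are exactly where the mathematical content of the conjecture lies, and your proposal leaves both as expectations (``should exist,'' ``I expect'') rather than constructions. As it stands, the proposal is a plausible strategy statement, not a proof; to make progress one would first need a mechanism (e.g.\ a morphism of tetrahedral objects inducing a morphism of $\Delta$-groupoids $G\to G'$ or a common quotient) that controls how the global identifications change under the $2$--$3$ move, and that mechanism is precisely what is missing.
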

\begin{remark}
Conjecture~\ref{homol-conj} is true for any  hyperbolic knot $K$  when the ideal triangulations are geometric. In this case the corresponding $\Delta$-groupoid homology is not interesting: one has the isomorphism $H_*(G_K)\simeq \tilde{H}_*(\mathbb{S}^3/K,\mathbb{Z})$, and it is well known fact that the homology of the space
$\mathbb{S}^3/K$ is independent of $K$:
 \[
\tilde{H}_n(\mathbb{S}^3/K,\mathbb{Z})=\left\{\begin{array}{cl}
\mathbb{Z},&\mathrm{if}\ n\in\{2,3\};\\
0,&\mathrm{otherwise}.
\end{array}
\right.
\]
 However, it might be interesting for non-hyperbolic knots, for example, one has the following results of calculations for the two simplest torus knots of types $(2,3)$ and $(2,5)$ (with some particular choices of ideal triangulations):
\[
H_n(G_{3_1})=\left\{\begin{array}{cl}
\mathbb{Z},&\mathrm{if}\ n=2;\\
0,&\mathrm{otherwise}.
\end{array}
\right.\quad
H_n(G_{5_1})=\left\{\begin{array}{cl}
\mathbb{Z},&\mathrm{if}\ n=2;\\
\mathbb{Z}_5,&\mathrm{if}\ n=3;\\
0,&\mathrm{otherwise}.
\end{array}
\right.
\]
\end{remark}
\section{Conclusion}\label{sec:concl}
The algebraic structure of $\Delta$-groupoids seems to be profoundly related with combinatorics of (ideal) triangulations of three-manifolds. Functorial relations of $\Delta$-groupoids to the category of rings permit us to construct ring-valued invariants which seem to be interesting. In the case of knots, these rings are universal for restricted class of representations of knot groups into the group $GL(2,R)$, where $R$ is an arbitrary ring.

Ideal triangulations of link complements give rise to presentations of associated $\Delta$-groupoids which, as groupoids with forgotten $\Delta$-structure, have as many connected components as the number of link components. In particular, they are connected groupoids in the case of knots. In general, two $\Delta$-groupoids associated with two ideal triangulations of one and the same knot complement are not isomorphic, but one can argue that the corresponding vertex groups are isomorphic. In this way, we come to the most evident $\Delta$-groupoid knot invariant to be called the \emph{vertex group} of a knot. It is not very sensitive invariant as one can show that it is trivial one-element group in the case of the unknot, isomorphic to the group of integers $\mathbb{Z}$ for any non-trivial torus knot, and isomorphic to the group $\mathbb{Z}^2=\mathbb{Z}\oplus \mathbb{Z}$ for any hyperbolic knot. Moreover, it is trivial at least for some connected sums, e.g. $3_1\# 3_1$ or $3_1\# 4_1$. In the light of these observations it would be interesting to calculate the vertex group for satellite knots which are not connected sums.

One can also refine the vertex group by adding extra information associated with a distinguished choice of a meridian-longitude pair. In this way one can detect the chirality of knots, for example, in the case of the torus knot $T_{p,q}$ of type $(p,q)$ and its mirror image $T_{p,q}^*$. In both cases, the vertex group, being the infinite cyclic group, is generated by the meridian $m$, while the longitude $l$ is given as $l=m^{pq}$ for $T_{p,q}$, and $l=m^{-pq}$ for $T_{p,q}^*$.

Finally, we have defined the integral $\Delta$-groupoid homology which seems not to be very interesting in the case of hyperbolic knots, but could be of some interest in the case of non-hyperbolic knots.
\subsection*{Acknowledgments} I would like to thank J\o rgen Ellegaard Andersen, Christian Blanchet, David Cimasoni, Francesco Costantino, Stavros Garoufalidis, Rostislav Grigorchuk, Pierre de la Harpe, Anatol Kirillov, Gregor Masbaum, Ashot Minasyan, Nikolai Reshetikhin, Leon Takhtajan, Vladimir Turaev, Oleg Viro, Roland van der Veen, Claude Weber for valuable discussions, and Michael Atiyah for his encouragement in this work.

\end{document}